\documentclass[reqno,onecolumn,oneside]{paper}
\usepackage[colorlinks]{hyperref}
\usepackage{color}
\usepackage[english]{babel}
\usepackage{enumerate,
amsmath,
amsthm,
amsfonts,
pifont,
slashed,
ifsym,
yfonts,
calligra,
amssymb,
latexsym,
mathrsfs,
}
\usepackage[all]{xy}
\usepackage[sort]{cite}

\newtheorem{theorem}{Theorem}[section]
\newtheorem{proposition}[theorem]{Proposition}
\newtheorem{lemma}[theorem]{Lemma}
\newtheorem{corollary}[theorem]{Corollary}
\theoremstyle{definition}
\newtheorem{definition}[theorem]{Definition}

\theoremstyle{remark}

\newtheorem*{remark*}{Remark}

\makeatletter
\newdimen\mymathindent
\newenvironment{bulletequation}%
    {\@beginparpenalty\predisplaypenalty
     \@endparpenalty\postdisplaypenalty
     \refstepcounter{equation}%
     \trivlist \item[]\leavevmode
       \hb@xt@\linewidth\bgroup $\m@th
         \displaystyle
         \hskip\mymathindent}%
        {$\hfil 
         \displaywidth\linewidth\hbox{\@eqnnum}%
       \egroup
     \endtrivlist}
\makeatother

\newcommand{\cns}{\cat C\!\mathit{ns}}
\newcommand{\bydef}{\mathrel{\mathop:}=}

\newcommand{\id}{\operatorname{id}}

\newcommand{\sub}{(~~)}
\newcommand{\subl}{(~~)^l}
\newcommand{\subr}{(~~)^r}
\newcommand{\<}{_\textrm{--}}
\newcommand{\ID}{\operatorname{\textsc{Id}}}
\newcommand{\T}{\operatorname{Thm}}


\newcommand{\lang}{\mathcal{L}}
\newcommand{\fml}{\mathit{Fm}}
\newcommand{\Fml}{\mathbf{Fm}}
\newcommand{\fmll}{\mathit{Fm}'}
\newcommand{\Fmll}{\mathbf{Fm'}}

\newcommand{\sfm}{\Sigma_{\cat L}}
\newcommand{\Sfm}{\mathbf{\Sigma_{\cat L}}}
\newcommand{\sfmm}{\Sigma_{\cat L'}}
\newcommand{\Sfmm}{\mathbf{\Sigma_{\cat L'}}}

\newcommand{\SL}{\mathcal{S}\ell}
\newcommand{\Q}{\mathcal{Q}}

\newcommand{\Mod}{\textrm{-}\mathcal{M}\!\!\:\mathit{od}}

\newcommand{\cat}{\mathcal}

\newcommand{\MM}{\mathbf{M}}
\renewcommand{\AA}{\mathbf{A}}
\newcommand{\NN}{\mathbf{N}}
\newcommand{\LL}{\mathbf{L}}

\newcommand{\QQ}{\mathbf{Q}}
\newcommand{\RR}{\mathbf{R}}
\newcommand{\TT}{\mathbf{T}}
\newcommand{\VV}{\mathbf{V}}
\newcommand{\BB}{\mathbf{B}}

\newcommand{\V}{\mathit{Var}}
\newcommand{\EQ}{\mathbf{Eq}}
\newcommand{\Eq}{\mathit{Eq}}

\newcommand{\seq}{\mathit{Seq}}
\renewcommand{\SS}{\mathbf{S}}
\newcommand{\Hom}{\mathbf{hom}}

\newcommand{\N}{\mathbb{N}}

\renewcommand{\wp}{\mathscr{P}}

\newcommand{\Th}{\mathbf{Th}}


\renewcommand{\phi}{\varphi}

\newcommand{\g}{\gamma}
\renewcommand{\d}{\delta}


\newcommand{\restr}{\upharpoonright}
\newcommand{\la}{\left\langle}
\newcommand{\ra}{\right\rangle}

\newcommand{\under}{\backslash}
\newcommand{\ost}{{}_\cdot/}
\newcommand{\lto}{\longrightarrow}
\newcommand{\To}{\Rightarrow}

\newcommand{\lmapsto}{\longmapsto}
\newcommand{\ust}{\under_\cdot}
\newcommand{\ov}{\overline}

\newcommand{\tensor}{\otimes}

\newcommand{\eq}{{\approx}}

\newcommand{\f}{\ensuremath{\varphi}}

\def\amslatex\slash{{\protect\AmS-\protect\LaTeX}}

\begin{document} 
\title{An order-theoretic analysis of interpretations among propositional deductive systems}

\author{Ciro Russo}
\institution{Departamento de Matem\'atica \\ Universidade Federal da Bahia -- BA, Brazil \\ \small{\texttt{ciro.russo@ufba.br}}}
\maketitle
\date{Sep 21, 2012}

\begin{abstract}
In this paper we study interpretations and equivalences of propositional deductive systems by using a quantale-theoretic approach introduced by Galatos and Tsinakis. Our aim is to provide a general order-theoretic framework which is able to describe and characterize both strong and weak forms of interpretations among propositional deductive systems also in the cases where the systems have different underlying languages.

\vskip 10pt

\textbf{Ackowledgements.} I am extremely grateful to Constantine Tsinakis for his participation in the development of this work. In my opinion, his role in the preparation of this paper deserved a coauthorship, which he unfortunately declined.
\end{abstract}

\section*{Introduction}

The problem of comparing logical systems can be traced back to the early twentieth century, when Brouwer introduced intuitionistic logic \cite{brou}. In the debate around the principle of excluded middle, fostered by Brouwer's ideas, a central question was whether the provable assertions of classical logic could also be formulated and proved in intuitionistic logic. Such a question led to the problem of interpreting classical logic into intuitionistic logic.

Starting from the assumption that ``it is illegitimate to use the principle of excluded middle in the domain of transfinite arguments'', Kolmogorov \cite{kolmo} proved~---~in 1925~---~that ``finitary conclusions obtained by means of the principle of excluded middle, are in fact correct and can be proved even without its use.'' The main result he obtained essentially asserts that any provable formula of classical logic is intuitionistically provable provided that all of its subformulas are replaced by their respective double negations. From the viewpoint of propositional calculus, the Kolmogorov interpretation is not invariant with respect to the action of substitutions: if the interpretation and a substitution are applied to a classical formula, the resulting intuitionistic formula depends on which of the two is applied first. In 1929 Glivenko \cite{glivenko} proved the following theorem: ``An arbitrary propositional formula $A$ is classically provable, if and only if $\lnot\lnot A$ is intuitionistically provable.'' A major difference between the two interpretations is their behavior with respect to substitutions, as the latter is invariant with respect to any substitution in the language of classical logic.

In 1934 Gentzen introduced the sequent calculi LK and LJ, for classical and intuitionistic logic respectively \cite{gent}. Such new formal systems, which gave birth to proof theory and automated deduction, offered a new perspective on the relationship between the two logics. Indeed the two systems differ from each other just in the type of sequents they can handle~---~LJ admitting exclusively sequents with a single formula or no formulas on the right-hand side (and the obvious adjustments of the rules of LK reflecting the change of the sequent-type).

The existence of algebraic semantics for certain logics can be ascribed to Lindenbaum and Tarski, who showed how it is possible to associate in a canonical way, at least at the propositional level, logical calculi (and their attendant consequence relations) with classes of algebras.  We refer the reader to the article \cite{mpt} for a detailed survey of these developments. Moreover, in \cite{tarski}, Tarski proposed two methods for attacking the \emph{decision problem} in first-order logical systems, one of which~--- the so-called ``indirect method''~--- consists in transferring the problem from a system to some other one for which it has previously been solved; in order to do that, Tarski defined the concept of translation between (first-order) logical systems.

Many years later, Blok and Pigozzi introduced the concepts of \emph{equivalent algebraic semantics} and \emph{algebraizable logic} \cite{blokpigozzi}, which requires the comparison of two consequence relations defined on different syntactic constructs (formulas, equations, sequents) and, in some cases, also on different languages. As a result of that work, the interest for interpretations and translations between logics increased rapidly in the last two decades, and many authors investigated this problem from various points of view and with different approaches; see, for instance, \cite{blokjonsson2, blokpigozzi2, cze, dott, dza, feitosa, galtsi, pynko, raftery, rebver, woj}. These studies have produced a wide range of concepts connected to interpretations and translations of logics. 

In particular, it is shown in \cite{galtsi} that consequence relations (defined on sets of formulas, equations or sequents) can be represented as \emph{structural closure operators} on quantale modules --- such operators being in bijective correspondence with quotiens for any given module.

The aim of this work is to study interpretations and translations between propositional logics using the algebraic techniques developed in \cite{galtsi, thesis,russo}. A distinct feature of our work is the separation of the concept of a \emph{translation} from that of an \emph{interpretation}. Indeed, to our knowledge, since the aforementioned works by Kolmogorov, Glivenko and Tarski appeared, a translation of a language into another has only been considered as a part of an interpretation of one logical system into another~---~in some cases the words ``interpretation'' and ``translation'' being actually used as synonyms. On the contrary, here we will consider translations between languages as objects of study themselves, regardless~--- a priori~--- of whether an interpretation exists. A similar approach can be found in \cite{diac}, where language translations are called \emph{signature morphisms}.

Indeed, the fact that a deductive system is interpretable in another one means, roughly speaking, that the two consequence relations ``agree'' at some level with each other. On the other hand, the language is a syntactic object whose existence is independent from any consequence relation that, eventually, can be defined on it; indeed, we may have many different consequence relations defined on the same language. For these reasons, it is preferable that a translation between two languages regards only the connectives of the languages without any a priori involvement of the deductive apparatus of the systems. To illustrate this point, if two people speak different languages and one of them is able to translate in his own language what the other says, it does not follow that he/she also agrees with his/her interlocutor's ideas. Conversely, two people may have identical ideas but be unable to translate them into each other's language.

According to this point of view, one can encounter different situations corresponding to the existence and non existence of a translation of languages and the interpretation of two consequence relations, and to the strengthening and weakening of the concept of interpretation.

On the other hand, logical systems with different underlying languages are, in some sense (that will appear clearer and more precise later on in the paper), objects in different categories. Therefore, a comparison between such systems requires the existence of a canonical method for putting them in the same category, namely, the existence of a suitable functor.

The main results of the paper can be briefly summarized as follows.
\begin{itemize}
\item In Lemma \ref{qinq'} we show that any translation between two given propositional languages $\lang$ and $\lang'$ induces a homomorphism between the corresponding substitution monoids. This result is completed by Theorem \ref{transchar}, in which we characterize the homomorphisms between substitution monoids that are induced by translations.
\item Theorem \ref{retraction} shows that surjectivity is a sufficient condition for a language translation to have a right-inverse translation and, therefore, to induce a monoid retraction.
\item In Theorem \ref{gt} we extend one of the main results of \cite{galtsi} to interpretations and representations between deductive systems with the same language.
\item In Section \ref{tens} we show that several ring-theoretic constructions can be suitably adapted to quantale modules. In particular, in Theorem \ref{adjfunct}, we prove that any homomorphism between two quantales defines an adjoint and co-adjoint functor (in the opposite direction) between the corresponding categories of modules. Moreover, we prove that such a functor is also a full embedding if the corresponding quantale homomorphism is surjective, and its left adjoint is a retraction of categories if the quantale homomorphism is a retraction (Theorem \ref{subs}).
\item The constructions and results of Section \ref{tens}, together with the basic properties of quantales and with the results of Section \ref{transsec}, provide the desired functors that allow us to sensibly generalize Theorem \ref{gt} and several results of \cite{galtsi}. In particular, Theorem \ref{faiththm} and Corollary \ref{equivthm} provide the extension of Theorem \ref{gt} to the case of systems with different languages; Theorem \ref{equivthm2} proves that, assuming the existence of a surjective translation, the characterization of Corollary \ref{equivthm} can be eased by using Theorem \ref{retraction}; last, Theorem \ref{nonconsthm} gives an account of weak interpretations.
\end{itemize}

\section{Abstract consequence relations}
\label{prel}

A successful approach to logical consequence, dating back at least to the work of Tarski,\footnote{The approach, in fact, has an antecedent in Bernard Bolzano's refined analysis of consequence in his \emph{Wissenschaftslehre} (1837).} consists in giving an account of it via a \emph{relation}. While Tarski defined consequence relation on an algebra of formulas, at the present stage we prefer to follow the approach of Blok and J\'onsson \cite{blokjonsson2}, and consider the more general case of a nonempty set $S$ about which no inner structure is postulated.

\begin{definition}
A \emph{consequence relation} over the set $S$ is a relation $\vdash \subseteq \wp (
S) \times S$ satisfying, for all $X, Y, \{u\} \subseteq S$:
\begin{itemize}
\item[] 
\begin{bulletequation} 
\textrm{if $u\in X$, then $X\vdash u$;} \label{sentailin}
\end{bulletequation}
\item[]  
\begin{bulletequation} 
\textrm{if $X\vdash u$ and $X\subseteq Y$, then $Y\vdash u$; and} \label{sentailimp}
\end{bulletequation}
\item[]  
\begin{bulletequation} 
\textrm{if $Y\vdash u$ and $X\vdash v$ for every $v\in Y$, then $X\vdash u$.} \label{sentailjoin}
\end{bulletequation}
\end{itemize}
\end{definition}
\noindent Following convention, we write $X \vdash Y$ if $X \vdash v$ for all $v \in Y$, 
and $\vdash v$ if $\emptyset \vdash v$.

An equivalent approach, also due to Alfred Tarski, consists in describing logical consequence by means of a closure operator. In the present context, we use the term \emph{consequence operator} for a closure operator on the power set $\wp(S)$ of a set $S$, that is, a map $\cns : \wp(S) \lto \wp(S)$ satisfying the following conditions for all $X, Y \subseteq S$: 

\begin{itemize}
\item[] 
\begin{bulletequation} 
\textrm{if $X \subseteq Y,$ then $\cns(X) \subseteq \cns(Y); $} 
\end{bulletequation}
\item[]  
\begin{bulletequation} 
\textrm{$X \subseteq \cns(X);$ and}
\end{bulletequation}
\item[]  
\begin{bulletequation} 
\textrm{$\cns (\cns(X)) = \cns(X).$}
\end{bulletequation}
\end{itemize}

Given a set $S$, there exists a bijective correspondence between all consequence operators $\cns $ on $\wp(S)$ and all consequence relations $\vdash $ over $S$. More specifically:

\begin{lemma}\label{oprel}
Let $S\neq \emptyset $ be a set. If $\vdash $ is a consequence relation over $S$, then the map $\cns_{\vdash}: \wp(S) \lto \wp(S)$
defined by
\begin{equation*}
\cns_{\vdash}(X) =\left\{ u\in S \mid X\vdash u\right\} 
\end{equation*}
is a consequence operator on $\wp(S) $.
Conversely, if $\cns$ is a consequence operator on $\wp(S) $, then the relation $\vdash_{\cns} \subseteq \wp(S) \times S$ defined by
\begin{equation*}
X \vdash _{\cns} u \ \text{ iff } \ u \in \cns(X) 
\end{equation*}
is a consequence relation over $S$. Furthermore, $\cns_{\vdash_{\cns}} = \cns$ and $\vdash_{\cns_{\vdash}} = \ \vdash $.
\end{lemma}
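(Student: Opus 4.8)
The plan is to verify the four assertions by directly translating back and forth between the three clauses defining a consequence relation and the three closure-operator axioms; the argument is entirely routine, so I will mainly indicate which clause maps to which.

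First I would check that $\cns_\vdash$ is a consequence operator. Monotonicity follows from \eqref{sentailimp}: if $X\subseteq Y$ and $u\in\cns_\vdash(X)$, then $X\vdash u$, hence $Y\vdash u$, so $u\in\cns_\vdash(Y)$. Extensivity $X\subseteq\cns_\vdash(X)$ is precisely \eqref{sentailin}. For idempotency, the inclusion $\cns_\vdash(X)\subseteq\cns_\vdash(\cns_\vdash(X))$ is an instance of extensivity applied to the set $\cns_\vdash(X)$; for the reverse inclusion, suppose $u\in\cns_\vdash(\cns_\vdash(X))$, i.e. $\cns_\vdash(X)\vdash u$. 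By definition of $\cns_\vdash(X)$ we have $X\vdash v$ for every $v\in\cns_\vdash(X)$, so clause \eqref{sentailjoin}, applied with $Y=\cns_\vdash(X)$, gives $X\vdash u$, that is $u\in\cns_\vdash(X)$.

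Next I would check that $\vdash_\cns$ is a consequence relation: clause \eqref{sentailin} for $\vdash_\cns$ is extensivity of $\cns$, clause \eqref{sentailimp} is monotonicity of $\cns$, and for clause \eqref{sentailjoin}, if $Y\vdash_\cns u$ and $X\vdash_\cns v$ for all $v\in Y$, then $u\in\cns(Y)$ and $Y\subseteq\cns(X)$, whence by monotonicity and idempotency $u\in\cns(Y)\subseteq\cns(\cns(X))=\cns(X)$, i.e. $X\vdash_\cns u$. The two round-trip identities are then mere unfolding of definitions: $\cns_{\vdash_\cns}(X)=\{u\mid u\in\cns(X)\}=\cns(X)$, and $X\vdash_{\cns_\vdash}u$ iff $u\in\cns_\vdash(X)$ iff $X\vdash u$.

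I expect no serious obstacle here; the only points that require more than a one-line remark are the nontrivial inclusion in the idempotency argument and, dually, the verification of clause \eqref{sentailjoin} for $\vdash_\cns$. In each case one must invoke the ``transitive'' axiom — clause \eqref{sentailjoin} on one side, idempotency on the other — with the correct auxiliary set, namely $\cns_\vdash(X)$ respectively $\cns(X)$.
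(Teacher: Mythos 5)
Your verification is correct: each closure-operator axiom is matched to the right clause of the consequence-relation definition, the two non-trivial steps (the reverse inclusion for idempotency via clause (\ref{sentailjoin}) with auxiliary set $\cns_\vdash(X)$, and clause (\ref{sentailjoin}) for $\vdash_\cns$ via monotonicity plus idempotency applied to $\cns(X)$) are handled properly, and the round-trip identities are indeed definitional. The paper states this classical Tarski-style correspondence without proof, and your argument is precisely the standard one that would be expected there.
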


Let $\vdash $ be a consequence relation over $S$, and let $\cns$ be the associated consequence operator on $\wp(S)$. $X\subseteq S$ is said to be a \emph{$\vdash $-theory} if it is closed under $\cns$: $X= \cns(X)=\{u \in S \mid  X\vdash u\}$. Note that the poset of $\vdash$-theories, denoted by \emph{$\mathrm{Th}\left( \vdash \right)$} or \emph{$\mathrm{Th}\left( \cns \right)$}, is a closure system over $S$, that is, a subset of $\wp(S)$ that is closed under arbitrary intersections. $\mathrm{Th}\left( \vdash \right)$ completely determines $\cns$ and $\vdash$. Furthermore,  there exists a bijective correspondence between consequence relations over $S$, closure systems over $S$, and consequence operators on $\wp(S)$.

It is pertinent to remark that we have placed no restrictions on the cardinalities of our sets of premisses of a consequence relation, which may be finite as well as infinite. Nonetheless, since logical deductions generally proceed from finitely many premisses, we single out finitary consequence relations. Formally, a consequence relation $\vdash$  over $S$ is called \emph{finitary}, provided for all $X \cup \{u\} \subseteq S$,
\begin{equation}\label{sentailfinitary}
\textrm{if $X \vdash u$, then there exists a finite $Y \subseteq X$ such that $Y \vdash u$.}
\end{equation}
Note that $\vdash$ is finitary iff the associated consequence operation $\cns$ satisfies a related condition for all $X\subseteq S$ and all $u\in S$: if $u\in \cns(X)$, then $u\in \cns(Y),$ for some finite subset $Y$ of $X$. We use the term \emph{finitary} (instead of the more common term \emph{algebraic}) for any consequence operation that satisfies the preceding condition. However, for the sake of readability, such a restriction shall not be imposed throughout the paper; we will briefly discuss it in Section \ref{concl}.

One of the most distinctive features of logical consequence is its formal character: a logical system is built upon a given language and its consequence relation is preserved under the application of suitable syntactic modifications, called substitutions. Actions of monoids on sets provide a suitable mathematical framework for capturing this feature.  

Formally, let $S$ be a nonempty set. A monoid $\mathbf{A} = \la A, \cdot, 1\ra $ is said to \emph{act} on $S$ (and $S$
is said to be an \emph{$\mathbf{A}$-set}) in case there exists an operation $\cdot: A\times S\lto S$ that satisfies
\begin{equation}\label{action}
\left(a b\right) \cdot u = a \cdot \left(b \cdot u\right) \ \text{ and } \ 1 \cdot u = u,
\end{equation}
for all $a, b\in A$ and all $u\in S$. The action is also called \emph{scalar product} or \emph{scalar multiplication}.

Even if we used the same symbol $\cdot$ for both the monoid multiplication and the action of $\AA$ on $S$, in expressions like the ones in (\ref{action}) we use plain juxtaposition in place of the former and ``$\cdot $'' for the latter. We will keep using this convention (also for quantale modules, from Section \ref{ressec} on) throughout the paper whenever no confusion is likely to arise; moreover, in the case of different sets subject to monoid actions, we may use suitable subscripts in order to avoid confusion.

Let now $S$ be an $\AA$-set. A consequence relation $\vdash$ over $S$ is said to be 
\emph{action-invariant} if, for any $a\in A$ and any $X\cup\{u\} \subseteq S$, 
\begin{equation}\label{sentailstructural}
\text{whenever $X \vdash u$, then $a \cdot X \vdash a \cdot u$,}
\end{equation}
where $a \cdot  X = \{a \cdot v: v\in X\}$.

Note that $\vdash$ is action-invariant if and only if the associated consequence operator $\cns$ satisfies the condition 
\begin{equation}\label{subinv}
a \cdot \cns(X) \subseteq \cns(a \cdot X).
\end{equation} 
By extension, we call \emph{action-invariant} any consequence operator $\cns$ that satisfies the preceding condition.

For most consequence relations arising in logic, the assertions in the set $S$ are constructed in some way from elements of the term algebra. A \emph{propositional language} is a pair $\lang = \la L, \nu \ra$ consisting of a set $L$ and a map $\nu: L \lto \N_0$. The elements of $L$ are called \emph{connectives}, and the image of a connective under $\nu$ is called its \emph{arity}; nullary connectives are most often called \emph{constant symbols} or simply  \emph{constants}.

Given a propositional language $\lang$ and a denumerable set $\V = \{x_n \mid n \in \N\}$ of \emph{propositional variables}, the $\lang$-formulas are strings of connectives defined recursively by means of the following conditions:
\begin{enumerate}[(F1)]
\item every propositional variable and every constant symbol is an $\lang$-formula;
\item if $f$ is a connective of arity $\nu(f) > 0$ and $\phi_1, \ldots, \phi_{\nu(f)}$ are $\lang$-formulas, then $f\phi_1\ldots\phi_{\nu(f)}$ -- usually denoted by $f(\phi_1, \ldots, \phi_{\nu(f)})$ -- is an $\lang$-formula;
\item all $\lang$-formulas are built by iterative applications of (F1) and (F2).
\end{enumerate}

We denote the set of all $\lang$-formulas by $\fml$. For any $\phi \in \fml$, we write $\phi = \phi[x_{i_1}, \ldots, x_{i_n}]$ whenever we wish to indicate that the variables of $\phi$ are among those in the set $\{x_{i_1}, \ldots, x_{i_n}\}$.

On the algebraic side, a language $\lang$ can be used to specify the fundamental operations of an algebra (or class of algebras). In this case, we often use the term \emph{signature} in place of \emph{language}. 
If $\lang$ is a language over $\V$, then by the inductive definition of formulas, 
\begin{equation*}
\Fml = \langle \fml, \lang^{\Fml} \rangle
\end{equation*}
becomes an $\lang$-algebra, that is, an algebra of signature $\lang$. We refer to $\Fml$ as the \emph{term algebra} of signature $\lang$,  and recall that it is the free algebra over $\V$ in the class of all algebras of signature $\lang$. 

The vast majority of research in abstract algebraic logic is concerned with consequence relations on \emph{formula structures}. Without being precise about the exact meaning of the term `formula structure', we may assume for our purposes that it is a subset of the set $\seq$ of all sequents of a given signature, in the sense described below. Formulas are the elements of the term algebra $\Fml$, while  \emph{equations} are ordered pairs of formulas $(\f,\psi)$, often written suggestively as $\f \eq \psi$. They are just the elements of the algebra $\EQ = \la \Eq, \lang^{\EQ} \ra =\Fml^2$. Given  non-negative integers $m, n$ (not both equal to zero), a \emph{sequent over $\lang$ of type $(m,n)$} is a pair $(\Gamma, \Delta)$, consisting  of a sequence  $\Gamma = (\phi_1, \dots , \phi_m)$ of $\lang$-formulas of length $m$, and a sequence $\Delta = (\psi_1, \dots, \psi_n)$ of $\lang$-formulas of length $n$. Instead of $(\Gamma, \Delta)$ we usually write $\phi_1, \dots , \phi_m \To \psi_1, \dots, \psi_n$ or $\Gamma \To \Delta$.

Throughout this work, by a ``set of sequents'' $S$ we always understand a subset of $\seq$ which is closed under type, namely, such that whenever a sequent of type $(m,n)$ is  in $S$, then all the sequents of type $(m,n)$ are in $S$. Note that formulas may be identified with all $(0,1)$-sequents, and equations with all $(1,1)$-sequents.

In this setting, the endomorphisms on the algebra of formulas $\Fml$ form a monoid $\Sfm = \la \sfm, \circ, \id_{\fml}\ra$ which has a natural action on $\fml$: $\sigma \cdot \phi =  \sigma(\phi) $, for all $\sigma \in \sfm$ and $\phi \in \fml$. We refer to $\Sfm$ as the monoid of substitutions, and remark that its action on $\fml$ has a natural extension to $\seq$. A consequence relation on a formula structure that is action-invariant with respect to substitutions will be called \emph{substitution-invariant}.\footnote{The term ``structural" is also widely used, however, it is somewhat misleading because this property bears no relationship to the concepts of structural rule or substructural logic.} 

\begin{definition}\label{dedsys}
A \emph{(propositional) deductive system} $\cat S = \la S, \vdash\ra$ is a pair consisting of a set $S$ of sequents over a propositional language $\lang$ and a substitu\-tion-invariant consequence relation over $S$ or, what amounts to the same, a substitution-invariant consequence operator on $\wp(S)$.
\end{definition}

The possibility for two deductive systems to be equipped with entailments which somehow look alike, even if presented under different guises, is described by the following definition. 

\begin{definition}\label{intlang}
Let $\AA$ be a monoid, $S$ and $T$ two $\AA$-sets, and $\vdash_S$ and $\vdash_T$ action-invariant consequence relations on $S$ and $T$ respectively.
\begin{enumerate}[(i)]
\item A map $\iota: S \lto \wp(T)$ is said to be \emph{action-invariant} if $\iota(a \cdot_S x) = a \cdot_T \iota(x)$ for all $a \in A$ and $x \in S$.
\item An action invariant map $\iota: S \lto \wp(T)$ is called an \emph{interpretation} of $\vdash_S$ in $\vdash_T$ if, for all $X \cup \{u\} \subseteq S$, it satisfies
\begin{equation}\label{neqint}
X \vdash_S u \quad \textrm{implies} \quad \iota[X] \vdash_T \iota(u).
\end{equation}
\item An action invariant map $\iota: S \lto \wp(T)$ is called a \emph{representation}, or a \emph{conservative interpretation}, of $\vdash_S$ in $\vdash_T$ if, for all $X \cup \{u\} \subseteq S$, it satisfies 
\begin{equation}\label{eqint}
X \vdash_S u \quad \textrm{if and only if} \quad \iota[X] \vdash_T \iota(u).
\end{equation}
\item Two representations $\iota: S \lto \wp(T)$ and $\iota': T \lto \wp(S)$ are said to form an \emph{equivalence} if, for all $v \in T$,
\begin{equation}\label{eqeq}
v \dashv\vdash_T \iota[\iota'(v)].
\end{equation}
In this case, we say that $\vdash_S$ and $\vdash_T$ are \emph{equivalent}.\footnote{It can be easily shown that (iv) can be equivalently formulated by substituting (\ref{eqeq}) with
\begin{equation}\label{eqeq'}
u \dashv\vdash_S \iota'[\iota(u)].
\end{equation}}
\item A \emph{weak} interpretation (respectively: representation) of $\vdash_S$ in $\vdash_T$ is a map $\iota: S \lto \wp(T)$ which satisfies (\ref{neqint}) (resp.: (\ref{eqint})) but is not necessarily action invariant. Two weak representations $\iota$ and $\iota'$ satisfying (\ref{eqeq}) are said to form a \emph{similarity}; in this case, $\vdash_S$ and $\vdash_T$ are called \emph{similar}. 
\end{enumerate}
\end{definition}

Important examples of equivalent consequence relations involve algebraizable consequence relations in the sense of Blok and Pigozzi \cite{blokpigozzi}. More specifically, if $\vdash$ is an algebraizable consequence relation on $\fml$ with equivalent algebraic semantics a class $\mathcal{K}$ of algebras, then the consequence relation $\models$ on $\Eq$ arising from $\mathcal{K}$ is equivalent to $\vdash$ (see, for example, \cite{blokjonsson2,galtsi}).

We remark that Definition \ref{intlang} can be easily reformulated in terms of consequence operators by using Lemma \ref{oprel}.

\section{Deductive systems as categories}
\label{catsec}

In the present section, which may be viewed as parenthetic to the subsequent discussion, we justify the intuition that equivalence of two consequence relations is intimately related to categorical equivalence. For any notion or result on category theory not explicitly reported here, we refer the reader to \cite{cats}.

A \emph{preorder} on a set $X$ is a reflexive and transitive binary relation $R$ on $X$; a preordered set $\la X, R \ra$ may be thought of as a category whose objects are the members of the set and whose morphisms are the pairs $(x,y)$ such that $x R y$. In what follows, we denote the associated equivalence relation by $\approx_{R}$: $x\approx_{R}y$ if and only if $xRy$ and $yRx$. 

Note that given two such categories $\la X, R \ra$ and $\la Y, S \ra$, a map $F : X\lto Y$ is a functor if and only if it is relation preserving: $aRb$ implies $F(a)SF(b)$. Of particular interest to us is the situation when  both $X$ and $Y$ are $\AA$-sets for a given monoid $\AA$. In this case, we use the term \emph{action-invariant} for a functor 
$F : \la X, R \ra\lto \la Y, S \ra$ that preserves scalar multiplication.

If $\cat S = \la S, \vdash \ra$ is a deductive system, $\vdash$ induces a preorder on $\wp(S)$, also denoted by $\vdash$; namely, for all $X, Y\subseteq S$,  $X \vdash Y$ whenever $X \vdash y$ for all $y \in Y$. In what follows, we denote the category $\la \wp(S), \vdash \ra$ by $\ov{\cat S}$.

Since $X \supseteq Y$ implies $X \vdash Y$, it is immediate that such a category has both an initial object $S$ and a terminal object $\varnothing$. It is worth mentioning, in the concrete case of deductive systems over a propositional language,  that the set $\T_{\cat S}$ of theorems  of $\cat S$ is a terminal object of $\ov{\cat S}$; therefore $\T_{\cat S} \cong \varnothing$ in $\ov{\cat S}$.

A category is said to be \emph{thin} if, for any two objects $A$ and $B$, there exists at most one morphism from $A$ to $B$; it is well-known that thin categories are, up to isomorphisms, precisely the preordered classes. Recall that a functor $F : \mathcal C \lto \mathcal D$ between two categories is called 
\begin{itemize}
\item \emph{faithful} provided that the hom-set restrictions are injective,
\item \emph{full} if the hom-set restrictions are surjective, and
\item \emph{isomorphism-dense} if for any $\cat D$-object $D$ there exists a $\cat C$-object $C$ such that $FC$ and $D$ are isomorphic.
\end{itemize}
Any functor whose domain is a thin category is obviously faithful.

An \emph{equivalence} is a full, faithful and isomorphism-dense functor. Equivalently, a functor $F : \mathcal C \lto \mathcal D$ is an equivalence if there exist an ``inverse'' functor $G : \mathcal D \lto \mathcal C$ and two natural isomorphisms $\eta : \ID_{\mathcal C}\lto GF$ and $\varepsilon : \ID_{\mathcal D}\lto FG$. Here, $\ID_{\mathcal C}$ and $\ID_{\mathcal D}$ denote the identity functors on $\mathcal C$ and $\mathcal D$, respectively. It is clear that  if $F$ is an equivalence, then so is the companion functor $G$. Specializing to the case of preordered sets, a functor $F : \la X, R \ra\lto \la Y, S \ra$ is a categorical equivalence provided there exists a functor $G : \la Y, S \ra \lto \la X, R \ra$ such that $x\approx_{R}GF(x)$ and $y\approx_{S}FG(y)$, for all $x\in X$ and $y\in Y$.

The next result shows that Definition \ref{intlang} is intuitively justified also from the categorical viewpoint.

\begin{theorem}\label{catint}
Let $\AA$ be a monoid, $S$ and $T$ two $\AA$-sets, $\la S, \vdash_S\ra$ and $\la T, \vdash_T\ra$ two action-invariant deductive systems over $S$ and $T$ respectively, $f: S \lto \wp(T)$ a map and $F: X \in \wp(S) \lmapsto f[X] \in \wp(T)$. Then the following hold:
\begin{enumerate}[(i)]
\item $f$ is an interpretation of $\la S, \vdash_S\ra$ into $\la T, \vdash_T\ra$ if and only if $F$ is an action-invariant faithful functor between the corresponding categories $\ov{\cat S}$ and $\ov{\cat T}$;
\item $f$ is a conservative interpretation if and only if $F$ is an action-invariant full and faithful functor;
\item if $g: T \lto \wp(S)$ is a map and $G: Y \in \wp(T) \lmapsto G[Y] \in \wp(S)$, $f$ and $g$ form an equivalence if and only if $F$ and $G$ are action-invariant and form a categorical equivalence.
\end{enumerate}
\end{theorem}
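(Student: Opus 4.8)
The plan is to unwind all three statements directly from the definitions, exploiting the fact that $\ov{\cat S} = \la \wp(S), \vdash_S\ra$ and $\ov{\cat T} = \la \wp(T), \vdash_T\ra$ are thin categories, so that \emph{faithfulness is automatic} (as remarked just before the theorem) and the only nontrivial ``functor'' content is relation-preservation, i.e.\ $X \vdash_S Y \Rightarrow f[X] \vdash_T f[Y]$. First I would record the key bookkeeping identity: for any $X \subseteq S$, $F(X) = f[X] = \bigcup_{x \in X} f(x)$, and similarly $F$ applied to a singleton gives $f[\{x\}] = f(x)$ (reading $f(x) \in \wp(T)$). The action on $\wp(S)$ is $a \cdot X = \{a\cdot_S v : v \in X\}$, so $F(a \cdot X) = f[a \cdot X] = \bigcup_{v \in X} f(a \cdot_S v)$, whereas $a \cdot F(X) = a \cdot f[X] = \bigcup_{v \in X} (a \cdot_T f(v))$; hence $F$ is action-invariant (preserves scalar multiplication) if and only if $f(a \cdot_S v) = a \cdot_T f(v)$ for all $a,v$, which is exactly the action-invariance of the map $f$ in Definition \ref{intlang}(i). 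This observation handles the ``action-invariant'' adjective uniformly in all three parts.

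Next I would prove (i). Suppose $f$ is an interpretation. Action-invariance of $F$ follows from the paragraph above. For relation-preservation, let $X \vdash_S Y$ in $\ov{\cat S}$, i.e.\ $X \vdash_S y$ for all $y \in Y$; by (\ref{neqint}), $f[X] \vdash_T f(y)$ for each $y$, and since $f[Y] = \bigcup_{y \in Y} f(y)$, property (\ref{sentailin})--(\ref{sentailjoin}) of the consequence relation (monotonicity plus the cut/transitivity axiom) gives $f[X] \vdash_T f[Y]$; thus $F$ is a functor, and it is faithful because $\ov{\cat T}$ is thin. Conversely, if $F$ is an action-invariant faithful functor, then relation-preservation applied to $X \vdash_S \{u\}$ (which holds iff $X \vdash_S u$) yields $f[X] \vdash_T f[\{u\}] = f(u)$, which is (\ref{neqint}); and $f$ is action-invariant by the bookkeeping identity. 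Part (ii) is then immediate: a functor $F$ between thin categories is full exactly when $F(X)\vdash_T F(Y)$ \emph{implies} $X \vdash_S Y$ (the hom-set from $X$ to $Y$ in $\ov{\cat T}$ being nonempty iff $F(X)\vdash_T F(Y)$, and it must be hit by the hom-set from $X$ to $Y$ in $\ov{\cat S}$), so combining with (i) we get: $F$ is action-invariant, full and faithful iff $f$ is action-invariant and $X \vdash_S u \Leftrightarrow f[X]\vdash_T f(u)$, which is (\ref{eqint}), i.e.\ $f$ is a conservative interpretation. Here one uses again that $X \vdash_S Y$ for all of $Y$ reduces to the singleton case.

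For (iii), assume $f, g$ form an equivalence in the sense of Definition \ref{intlang}(iv): both are representations and $v \dashv\vdash_T f[g(v)]$ for all $v \in T$ (equivalently $u \dashv\vdash_S g[f(u)]$ for all $u$, by the footnoted remark). By part (ii), $F$ and $G$ are action-invariant full and faithful functors. It remains to produce the natural isomorphisms $\eta : \ID_{\ov{\cat S}} \to GF$ and $\varepsilon : \ID_{\ov{\cat T}} \to FG$, or equivalently, in the thin-category reformulation stated in the excerpt, to check $X \approx_{\vdash_S} GF(X)$ and $Y \approx_{\vdash_T} FG(Y)$ for all $X \subseteq S$, $Y \subseteq T$. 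Now $GF(X) = g[f[X]] = \bigcup_{x \in X} g[f(x)] = \bigcup_{x\in X}\bigcup_{t \in f(x)} g(t)$; I would show $X \dashv\vdash_S GF(X)$ by reducing to generators: since $\vdash_S$ is closed under unions of premisses (axiom (\ref{sentailjoin})), it suffices to prove $x \dashv\vdash_S g[f(x)]$ for each single $x \in X$, and this is precisely the equivalence condition (\ref{eqeq'}) applied to $u = x$. Symmetrically $Y \dashv\vdash_T FG(Y)$ follows from (\ref{eqeq}). Hence $F$ and $G$ are mutually inverse equivalences. Conversely, if $F$ and $G$ are action-invariant and form a categorical equivalence, then in particular each is full, faithful and isomorphism-dense, so by (ii) $f$ and $g$ are (action-invariant) conservative interpretations, i.e.\ representations; and $Y \approx_{\vdash_T} FG(Y)$ specialized to $Y = \{v\}$ gives $\{v\} \dashv\vdash_T f[g(v)]$, i.e.\ (\ref{eqeq}). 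Thus $f, g$ form an equivalence.

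The only place demanding a little care is the ``full'' direction: one must articulate cleanly that in a thin category, since any functor is faithful, $F$ is full precisely when the existence of a $\ov{\cat T}$-morphism $F(X) \to F(Y)$ forces the existence of a $\ov{\cat S}$-morphism $X \to Y$ — i.e.\ $f[X] \vdash_T f[Y] \Rightarrow X \vdash_S Y$ — and then to reduce the universally-quantified $Y$ to singletons using axioms (\ref{sentailin})--(\ref{sentailjoin}). Everything else is a direct translation between the two languages, so I would keep the write-up to the verification of these equivalences and the reduction-to-generators step, citing Lemma \ref{oprel} only if a consequence-operator formulation is preferred.
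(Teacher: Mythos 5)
Your proposal is correct and follows essentially the same route as the paper's proof: unwind Definition \ref{intlang} in the thin categories $\ov{\cat S}$ and $\ov{\cat T}$, where faithfulness is automatic, functoriality is exactly (\ref{neqint}), fullness is exactly the converse implication in (\ref{eqint}), and the preorder criterion $X \approx_S GF(X)$, $Y \approx_T FG(Y)$ matches (\ref{eqeq}) and (\ref{eqeq'}). You merely spell out the singleton/union reductions that the paper leaves implicit, which is fine.
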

\begin{proof}
The fact that the map $f$ is action-invariant if and only if so is $F$ is trivial.

Now, with reference to Definition \ref{intlang}, $f$ is an interpretation if and only if any morphism $X \vdash_S Y$ is mapped by $F$ to a morphism $FX \vdash_T FY$, that is, $F$ is a (faithful) functor. The converse implication in (\ref{eqint}) can be reformulated in the categorical setting as ``if there exists a morphism $FX \vdash_T FY$ then there exists a morphism $X \vdash_S Y$'', and the latter holds if and only if $F$ is full. 

Last, the equivalent conditions (\ref{eqeq}) and (\ref{eqeq'}) hold if and only if, respectively, $Y \approx_T FG(Y)$ and $X \approx_S GF(X)$, i.e., if and only if $F$ is a categorical equivalence with inverse $G$. 
\end{proof}

The following result on weak interpretations readily follows from Theorem \ref{catint}.
\begin{corollary}
Let $\la S, \vdash_S\ra$ and $\la T, \vdash_T\ra$ be two deductive systems over $S$ and $T$ respectively, $f: S \lto \wp(T)$ a map and $F: X \in \wp(S) \lmapsto f[X] \in \wp(T)$. Then the following hold:
\begin{enumerate}[(i)]
\item $f$ is a weak interpretation of $\la S, \vdash_S\ra$ into $\la T, \vdash_T\ra$ if and only if $F$ is a faithful functor between the corresponding categories $\ov{\cat S}$ and $\ov{\cat T}$;
\item $f$ is a weak representation if and only if $F$ is a full and faithful functor;
\item if $g: T \lto \wp(S)$ is a map and $G: Y \in \wp(T) \lmapsto G[Y] \in \wp(S)$, $f$ and $g$ form a similarity if and only if $F$ and $G$ form a categorical equivalence.
\end{enumerate}
\end{corollary}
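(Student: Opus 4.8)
The plan is to derive the corollary directly from Theorem \ref{catint} by observing that the only difference between the weak and the strong notions in Definition \ref{intlang} is the omission of the action-invariance requirement, and that action-invariance of $f$ was used in Theorem \ref{catint} only to obtain action-invariance of $F$ — a conjunct that plays no role in the purely functorial conclusions. Concretely, the proof of Theorem \ref{catint} establishes, independently of any action-invariance assumption, the three equivalences: ``condition (\ref{neqint}) holds for $f$'' $\iff$ ``$F$ is a functor $\ov{\cat S}\lto\ov{\cat T}$''; ``condition (\ref{eqint}) holds for $f$'' $\iff$ ``$F$ is a full functor''; and ``$f,g$ satisfy (\ref{eqeq})'' $\iff$ ``$F,G$ form a categorical equivalence''. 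Since any functor out of a thin category is faithful, ``$F$ is a functor'' is the same as ``$F$ is a faithful functor'', and likewise ``full functor'' coincides with ``full and faithful functor'' here.

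First I would note that $\ov{\cat S}$ and $\ov{\cat T}$ are thin categories, being the categorical renderings of the preorders $\la\wp(S),\vdash_S\ra$ and $\la\wp(T),\vdash_T\ra$, so the final sentence before the statement of Theorem \ref{catint} (``Any functor whose domain is a thin category is obviously faithful'') applies. Then for part (i), a weak interpretation is by definition a map $f$ satisfying (\ref{neqint}) with no invariance hypothesis; the argument in the proof of Theorem \ref{catint}(i) — that (\ref{neqint}) says precisely that $F$ sends every morphism $X\vdash_S Y$ to a morphism $FX\vdash_T FY$ — never invoked action-invariance, so it gives exactly ``$F$ is a faithful functor'' in this setting. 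For part (ii), a weak representation satisfies the biconditional (\ref{eqint}); the forward direction gives functoriality as before, and the reverse direction is, verbatim as in the proof of Theorem \ref{catint}(ii), the statement that $F$ is full. Part (iii) is immediate from (ii) applied to both $f$ and $g$ together with the reformulation of (\ref{eqeq}) and (\ref{eqeq'}) as $Y\approx_T FG(Y)$ and $X\approx_S GF(X)$, which is exactly the definition of $F$ and $G$ forming a categorical equivalence of preordered sets.

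There is essentially no obstacle: the corollary is a transparent specialization of Theorem \ref{catint} obtained by deleting the invariance clauses on both sides of each equivalence simultaneously. The only point requiring a word of care is making explicit that the proof of Theorem \ref{catint} treats action-invariance of $f$ and of $F$ as a single matched pair of conditions that can be stripped from both sides without affecting the remaining logical equivalences — so that no circularity or hidden use of invariance sneaks in. I would therefore phrase the proof as: ``This is immediate from Theorem \ref{catint} upon observing that action-invariance of $f$ is equivalent to action-invariance of $F$, and that neither hypothesis is used in establishing the equivalences between conditions (\ref{neqint}), (\ref{eqint}), (\ref{eqeq}) and, respectively, functoriality, fullness, and categorical equivalence of the induced maps on power sets; recall also that any functor with thin domain is faithful, so that $\ov{\cat S}$ and $\ov{\cat T}$ being thin forces every (full) functor between them to be (full and) faithful.''
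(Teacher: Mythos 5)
Your proposal is correct and matches the paper's intent exactly: the paper gives no separate proof, stating only that the corollary ``readily follows from Theorem \ref{catint}'', and your argument makes explicit precisely why — the proof of that theorem handles action-invariance of $f$ and of $F$ as a separate matched pair, while the equivalences between (\ref{neqint}), (\ref{eqint}), (\ref{eqeq}) and functoriality, fullness, and categorical equivalence never use it, with faithfulness free since $\ov{\cat S}$ is thin.
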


\section{Translations}
\label{transsec}

Taking a closer look at Definition \ref{intlang}, and considering the concrete cases of propositional deductive systems, it should appear evident that such a definition is not satisfactory if we deal with systems defined on different underlying languages. Indeed, in this case it is able to describe only weak interpretations, while a notion of action-invariance is not even defined.

As we anticipated, aim of this paper is precisely to extend the algebraic and categorical approach of Galatos and Tsinakis \cite{galtsi} to such a situation; in order to do that, it is necessary to understand what does ``action-invariant'' mean in this case, namely, when the actions come from different monoids. So, in this section we shall define language translations, prove some results about them, and extend Definition \ref{intlang} to the most general case. Such results (in particular Lemma \ref{qinq'} and Theorem \ref{transchar}) will pave the way to the characterizations of Section \ref{interabs}. 

Let $\lang = \la L, \nu \ra$ be a propositional language. If $n \in \N_0$ and $f: \fml^n \lto \fml$ is a map, $f$ is called a \emph{derived operation on $\fml$} if there exists a formula $\phi_f = \phi_f[x_1, \ldots, x_n] \in \fml$ in the $n$ variables $x_1, \ldots, x_n$ such that $f(\psi_1, \ldots, \psi_n) = \phi_f[x_1/\psi_1, \ldots, x_n/\psi_n]$, for all $\psi_1, \ldots, \psi_n \in \fml$. In particular, if $n = 0$, $f$ is a \emph{derived constant}, i.e., a formula in $\fml$ containing only constants and no variables.

We use derived operations to define the notion of a language translation.

\begin{definition}\label{translation}
Let $\lang = \la L, \nu \ra$ and $\lang' = \la L', \nu' \ra$ be two propositional languages. Assume that for each connective $f \in L$ there exists a derived operation $f'$ on $\fmll$ of arity $\nu(f)$. If we denote by $\lang^{\Fmll}$ the set of such operations, the structure $\mathbf{Fm}'_{\lang} = \la \fmll, \lang^{\Fmll} \ra$ is an $\lang$-algebra. In this case, a map $\tau: \fml \lto \fmll$ is called a \emph{language translation} of $\lang$ into $\lang'$ if
\begin{enumerate}[(i)]
\item $\tau^{-1}(x) = \{x\}$ for any variable $x$,
\item $\tau$ is an $\lang$-homomorphism, that is
$$\tau(f(\phi_1, \ldots, \phi_{\nu(f)})) = f'(\tau(\phi_1), \ldots, \tau(\phi_{\nu(f)})),$$
for all $f \in L$ and $\phi_1, \ldots, \phi_{\nu(f)} \in \fml$.
\end{enumerate}\end{definition}

\begin{lemma}\label{sigmainend}
Let $\tau$ be a language translation of $\lang = \la L, \nu \ra$  into $\lang' = \la L', \nu' \ra$. The monoid of substitutions $\Sfmm$ (over the language $\lang'$) of $\fmll$ is a submonoid of the endomorphism monoid $\mathbf{End}_\lang(\mathbf{Fm}'_{\lang})$ of the $\lang$-algebra $\mathbf{Fm}'_{\lang} = \la \fmll, \lang^{\Fmll} \ra$.
\end{lemma}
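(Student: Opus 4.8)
The plan is to show that every element $\sigma\in\sfmm$, viewed as a map $\fmll\lto\fmll$, is in fact an endomorphism of the $\lang$-algebra $\mathbf{Fm}'_{\lang}$, and that the monoid operations agree, so that the inclusion $\sfmm\hookrightarrow\mathbf{End}_\lang(\mathbf{Fm}'_{\lang})$ is a monoid embedding. The underlying set of both monoids consists of maps $\fmll\to\fmll$, composition is ordinary function composition in both cases, and the unit is $\id_{\fmll}$ in both cases, so the only thing requiring an argument is that each $\lang'$-endomorphism of $\mathbf{Fm'}_{\lang'}$ respects the derived $\lang$-operations $\lang^{\Fmll}$.

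First I would fix a connective $f\in L$ of arity $n=\nu(f)$ and recall that the corresponding derived operation $f'\in\lang^{\Fmll}$ is, by Definition \ref{translation}, given by a fixed formula $\phi_{f}=\phi_{f}[x_1,\dots,x_n]\in\fmll$ via $f'(\psi_1,\dots,\psi_n)=\phi_f[x_1/\psi_1,\dots,x_n/\psi_n]$. Next I would take an arbitrary $\sigma\in\sfmm$, i.e. an $\lang'$-endomorphism of $\mathbf{Fm'}_{\lang'}$, and compute both $\sigma(f'(\psi_1,\dots,\psi_n))$ and $f'(\sigma(\psi_1),\dots,\sigma(\psi_n))$ for arbitrary $\psi_1,\dots,\psi_n\in\fmll$. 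The key observation is that substitution of formulas into a fixed formula $\phi_f$ commutes with the application of any $\lang'$-endomorphism: since $\phi_f$ is built from variables and $\lang'$-connectives, a straightforward induction on the structure of $\phi_f$ shows that $\sigma(\phi_f[x_1/\psi_1,\dots,x_n/\psi_n])=\phi_f[x_1/\sigma(\psi_1),\dots,x_n/\sigma(\psi_n)]$ — in the base case either $\phi_f$ is one of the $x_i$, giving $\sigma(\psi_i)$ on both sides, or $\phi_f$ is a variable $x_j$ not among $x_1,\dots,x_n$, in which case it is untouched by the substitution and $\sigma$ fixes it because variables go to variables (indeed $\tau^{-1}(x)=\{x\}$ forces substitutions to permute variables among themselves, but for this lemma we only need that $\sigma$ applied to a variable is again handled uniformly), and the inductive step uses exactly that $\sigma$ commutes with each $\lang'$-connective. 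Combining, $\sigma(f'(\vec\psi))=\sigma(\phi_f[\vec x/\vec\psi])=\phi_f[\vec x/\sigma(\vec\psi)]=f'(\sigma(\vec\psi))$, which is precisely the condition that $\sigma$ is an $\lang$-endomorphism of $\mathbf{Fm}'_{\lang}$.

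Having established $\sfmm\subseteq|\mathbf{End}_\lang(\mathbf{Fm}'_{\lang})|$ as sets of maps, I would then note that this inclusion is a submonoid inclusion essentially for free: composition of maps $\fmll\to\fmll$ is the monoid operation on both sides, the identity map is the unit on both sides, and the set $\sfmm$ is closed under this composition by definition of the substitution monoid, so no further verification of closure is needed. The main (and only mildly delicate) obstacle is the inductive ``commutation of substitution with endomorphisms'' lemma, and in particular being careful about variables of $\phi_f$ outside $\{x_1,\dots,x_n\}$; since $\phi_f=\phi_f[x_1,\dots,x_n]$ is assumed to have all its variables among $x_1,\dots,x_n$, this edge case does not actually arise, so the induction is completely routine. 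I would present the computation compactly, remarking that it is a standard fact that application of a homomorphism commutes with term substitution, and conclude that the set-theoretic inclusion is a monoid monomorphism, as claimed.
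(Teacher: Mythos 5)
Your proposal is correct and follows essentially the same route as the paper: the paper's proof simply invokes the standard fact that the operations in $\lang^{\Fmll}$ are derived (term) operations on $\fmll$ and hence are preserved by every $\lang'$-substitution, then notes that $\sfmm$ contains the identity and is closed under composition, while you merely spell out that standard fact by induction on the structure of $\phi_f$. One small caveat: your parenthetical claim that $\tau^{-1}(x)=\{x\}$ forces substitutions to permute variables is false (a substitution in $\sfmm$ may send a variable to an arbitrary $\lang'$-formula, and that condition constrains $\tau$, not $\sigma$), but this does no harm since, as you yourself observe, all variables of $\phi_f$ are among $x_1,\dots,x_n$, so the problematic base case never arises.
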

\begin{proof}
The inclusion $\sfmm \subseteq \operatorname{End}_\lang(\mathbf{Fm}'_{\lang})$ comes easily from the fact that the operations of $\mathbf{Fm}'_{\lang}$ are derived operations on $\fmll$, so they are preserved by any $\lang'$-substitution of $\fmll$. Thus $\Sfmm$ is  a submonoid of $\mathbf{End}_\lang(\mathbf{Fm}'_{\lang})$, for it contains the identity map and is closed under composition.
\end{proof}

We note that the reverse inclusion in the preceding lemma does not hold in general. For example, let $g_1$ and $g_2$ be two $n$-ary connectives of $\lang'$ not involved in any of the formulas that define the operations in $\lang^{\Fmll}$ and, for all $\phi \in \fmll$, let $h(\phi)$ be the formula obtained from $\phi$ by substituting each occurrence (if any) of $g_1$ by $g_2$. Then $h: \fmll \lto \fmll$ is an $\lang$-endomorphism of $\mathbf{Fm}'_{\lang}$ that is not a substitution of $\lang'$. 

\begin{lemma}\label{qinq'}
Let $\tau$ be a language translation of $\lang = \la L, \nu \ra$  into $\lang' = \la L', \nu' \ra$. The following hold:
\begin{enumerate}[(i)]
\item $\tau$ induces a monoid homomorphism $\ov \tau : \Sfm \lto \Sfmm$. More concretely, for each $\sigma \in \sfm$, let $\sigma'$ be the substitution uniquely determined by the map $\tau \circ \sigma_{\restr \V} \in \fmll^\V$. Then $\ov \tau$ is defined by $\ov \tau(\sigma)=\sigma'.$
\item $\ov \tau$ is injective (resp.: surjective) if and only if $\tau$ is.
\item $\tau$ commutes with the substitutions in $\sfm$ in the following sense: $\tau(\sigma(\phi)) = \ov\tau(\sigma)(\tau(\phi))$ for all $\sigma \in \sfm$ and $\phi \in \fml$.
\end{enumerate}
\end{lemma}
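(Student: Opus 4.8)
The plan is to prove the three parts in order, since (ii) and (iii) will both lean on the explicit formula for $\ov\tau$ established in (i). For part (i), the key observation is that a substitution $\sigma' \in \sfmm$ is completely determined by its restriction to the variables $\V$, because $\fmll$ is the free $\lang'$-algebra over $\V$; hence any map $\V \lto \fmll$ extends uniquely to an element of $\sfmm$. So I would define $\ov\tau(\sigma) = \sigma'$ to be the unique $\lang'$-substitution whose restriction to $\V$ agrees with $\tau \circ \sigma_{\restr\V}$. To check this is a monoid homomorphism, I would verify $\ov\tau(\id_{\fml}) = \id_{\fmll}$ (immediate, since $\tau \circ \id_{\restr\V} = \id_{\restr\V}$ using condition (i) of Definition \ref{translation}, which gives $\tau(x) = x$ for variables) and $\ov\tau(\sigma \circ \rho) = \ov\tau(\sigma) \circ \ov\tau(\rho)$. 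The latter is really part (iii) applied on variables: both sides are $\lang'$-substitutions, so it suffices to compare them on each $x \in \V$, where the left side is $\tau(\sigma(\rho(x)))$ and the right side is $\ov\tau(\sigma)(\ov\tau(\rho)(x)) = \ov\tau(\sigma)(\tau(\rho(x)))$; these agree once (iii) is known. This suggests proving (iii) first, or at least the computation behind it, and then reading off (i).

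For part (iii), the statement $\tau(\sigma(\phi)) = \ov\tau(\sigma)(\tau(\phi))$ for all $\phi \in \fml$ is a straightforward structural induction on the complexity of $\phi$. The base case $\phi = x$ a variable: the left side is $\tau(\sigma(x))$ and the right side is $\ov\tau(\sigma)(\tau(x)) = \ov\tau(\sigma)(x) = (\tau \circ \sigma_{\restr\V})(x) = \tau(\sigma(x))$ by definition of $\ov\tau(\sigma)$ (and using $\tau(x) = x$). The base case of a constant $c$: $\sigma(c) = c$ and $\tau(c)$ is a closed $\lang'$-term, hence fixed by any $\lang'$-substitution, so $\ov\tau(\sigma)(\tau(c)) = \tau(c) = \tau(\sigma(c))$. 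The inductive step for $\phi = f(\phi_1,\dots,\phi_k)$: using that $\sigma$ is an $\lang$-endomorphism of $\Fml$ and $\tau$ is an $\lang$-homomorphism (Definition \ref{translation}(ii)), $\tau(\sigma(\phi)) = f'(\tau(\sigma(\phi_1)),\dots,\tau(\sigma(\phi_k)))$; by the induction hypothesis this equals $f'(\ov\tau(\sigma)(\tau(\phi_1)),\dots,\ov\tau(\sigma)(\tau(\phi_k)))$; and since $\ov\tau(\sigma) \in \sfmm$ preserves the derived operation $f'$ (this is precisely Lemma \ref{sigmainend}), that equals $\ov\tau(\sigma)(f'(\tau(\phi_1),\dots,\tau(\phi_k))) = \ov\tau(\sigma)(\tau(\phi))$, again using that $\tau$ is an $\lang$-homomorphism to rewrite $f'(\tau(\phi_1),\dots,\tau(\phi_k)) = \tau(f(\phi_1,\dots,\phi_k))$.

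For part (ii), the injectivity direction is the delicate one, so I expect the main obstacle there. ($\Leftarrow$, surjectivity) If $\tau$ is surjective, then given any $\rho' \in \sfmm$, choose for each $x \in \V$ a preimage $\psi_x \in \fml$ with $\tau(\psi_x) = \rho'(x)$, and let $\sigma \in \sfm$ be the substitution with $\sigma(x) = \psi_x$; then $\ov\tau(\sigma)$ agrees with $\rho'$ on $\V$, hence $\ov\tau(\sigma) = \rho'$. ($\Leftarrow$, injectivity) Suppose $\tau$ is injective and $\ov\tau(\sigma_1) = \ov\tau(\sigma_2)$; evaluating at any $x \in \V$ gives $\tau(\sigma_1(x)) = \tau(\sigma_2(x))$, so $\sigma_1(x) = \sigma_2(x)$ by injectivity of $\tau$, and since substitutions are determined by their action on $\V$, $\sigma_1 = \sigma_2$. ($\Rightarrow$) Conversely, if $\ov\tau$ is injective we want $\tau$ injective; the trick is to realize any pair of formulas as images of variables under substitutions: given $\phi_1 \neq \phi_2$ in $\fml$, pick substitutions $\sigma_1,\sigma_2$ with $\sigma_i(x_1) = \phi_i$ and agreeing elsewhere — then $\sigma_1 \neq \sigma_2$, so $\ov\tau(\sigma_1) \neq \ov\tau(\sigma_2)$, meaning they differ on some variable $x_n$; by (iii)-on-variables that variable must be $x_1$ (they agree on the others by construction, and $\tau$ respects that agreement), giving $\tau(\phi_1) = \tau(\sigma_1(x_1)) \neq \tau(\sigma_2(x_1)) = \tau(\phi_2)$. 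If $\ov\tau$ is surjective, then for any $\psi' \in \fmll$ take $\rho' \in \sfmm$ with $\rho'(x_1) = \psi'$; a preimage $\sigma$ under $\ov\tau$ satisfies $\tau(\sigma(x_1)) = \ov\tau(\sigma)(x_1) = \psi'$, so $\psi'$ is in the image of $\tau$. The one point requiring a little care throughout is that "a substitution is determined by its restriction to $\V$" — this is just freeness of the term algebra $\Fml$ over $\V$, recalled in Section \ref{prel} — and that derived operations are genuinely preserved by substitutions, which is Lemma \ref{sigmainend}; with those in hand the whole argument is routine.
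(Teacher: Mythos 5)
Your proposal is correct, and it rests on the same key fact as the paper's proof, namely that $\tau$ commutes with the replacement of variables by formulas; the difference is purely one of organization. The paper proves the multiplicativity of $\ov\tau$ in (i) by a long explicit equational chain (introducing the composite formula $\phi\langle\psi_1,\ldots,\psi_n\rangle$ and using that $\tau$ acts by replacing each connective $f$ by $f'$), and then declares (iii) to be immediate from the definition of $\ov\tau$; you reverse the dependency, first establishing (iii) by structural induction on $\phi$ (with the variable and constant base cases handled via Definition \ref{translation}(i) and the fact that derived constants are variable-free, and the inductive step via Lemma \ref{sigmainend}), and then reading off the composition law of (i) by evaluating (iii) at variables. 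This buys a slightly cleaner exposition: the inductive content that the paper's chain uses implicitly (that priming distributes over formula composition) is made explicit, and the homomorphism property becomes a one-line corollary. Your treatment of (ii) matches the paper's in substance -- the same ``send a fixed variable to the two formulas'' construction for the injectivity converse -- and in addition you write out the surjectivity converse, which the paper dismisses as an ``analogous argument.''
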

\begin{proof}
\begin{enumerate}[(i)]
\item Obviously $\ov \tau(\id_{\fml}) = \id_{\fmll}$. Now let us show that $\ov \tau(\sigma_2 \circ \sigma_1) = \ov \tau(\sigma_2) \circ \ov \tau(\sigma_1)$ for all $\sigma_1, \sigma_2 \in \sfm$. For every formula $\phi[x_1, \ldots, x_n] \in \fml$ in the variables $x_1, \ldots, x_n$, set $\tau(\phi)=\phi'[x_1, \ldots, x_n]$. Note that $\phi'$ is obtained from $\phi$ by replacing each connective $f\in \lang$ which occurs in $\phi$ by $f'\in \lang^{\Fmll}$.

Let $x$ be an arbitrary variable, and let $\sigma_1, \sigma_2\in \Sfm$. Then $\sigma_1(x) = \phi[x_1, \ldots, x_n]$ and $\sigma_2(x_i) = \psi_i[x_{i1}, \ldots, x_{ik_i}]$ ($i = 1, \ldots, n$), for suitable formulas  $\phi, \psi_1, \ldots, \psi_n \in \fml$. Consider the formula in $\sum_{i=1}^n k_i$ variables
$$\phi\langle\psi_1,\ldots, \psi_n\rangle[x_{11}, \ldots, x_{nk_n}]=\phi[\psi_1[x_{11}, \ldots, x_{1k_1}], \ldots, \psi_n[x_{n1}, \ldots, x_{nk_n}]].$$
Then the computation below establishes (i).
$$\begin{array}{l}
  \ov \tau(\sigma_2 \circ \sigma_1)(x) \\
= \tau((\sigma_2 \circ \sigma_1)(x)) \\
= \tau(\sigma_2(\phi[x_1, \ldots, x_n])) \\
= \tau(\phi[\sigma_2(x_1), \ldots, \sigma_2(x_n)]) \\
= \tau(\phi\langle\psi_1,\ldots, \psi_n\rangle[x_{11}, \ldots, x_{nk_n}]) \\
= (\phi\langle\psi_1,\ldots, \psi_n\rangle)'[x_{11}, \ldots, x_{nk_n}] \\
= \phi'[\psi'_1[x_{11}, \ldots, x_{1k_1}], \ldots, \psi'_n[x_{n1}, \ldots, x_{nk_n}]] \\
= \phi'[\tau(\psi_1[x_{11}, \ldots, x_{1k_1}]), \ldots, \tau(\psi_n[x_{n1}, \ldots, x_{nk_n}])] \\
= \phi'[\tau(\sigma_{2}(x_1)), \ldots, \tau(\sigma_{2}(x_n))] \\
= \phi'[\ov \tau(\sigma_{2})(x_1), \ldots,\ov \tau(\sigma_{2})(x_n)] \\
= \ov \tau(\sigma_2)(\phi'[x_1, \ldots, x_n]) \\
= \ov \tau(\sigma_2)(\tau(\phi[x_1, \ldots, x_n])) \\
= \ov \tau(\sigma_2)(\ov \tau(\sigma_1)(x)) \\
= (\ov \tau(\sigma_2) \circ \ov \tau(\sigma_1))(x).
\end{array}$$

\item If $\tau$ is injective (resp.: surjective), then $\ov \tau$ is obviously injective (resp.: surjective) too.

Conversely, if $\tau$ is not injective, then there exist two different formulas $\phi, \psi \in \fml$ such that $\tau(\phi) = \tau(\psi)$. Therefore, if we consider the two substitutions $\sigma_\phi$ and $\sigma_\psi$ that send a variable $x$ respectively to $\phi$ and $\psi$, and fix the elements  of $\V \setminus \{x\}$, we have two different substitutions whose images under $\ov \tau$ coincide. Hence $\ov \tau$ is injective if and only if $\tau$ is injective. The fact that $\ov\tau$ surjective implies $\tau$ surjective can be easily proved with an analogous argument.
\item Last property follows immediately from the definition of $\ov \tau$.
\end{enumerate}
\end{proof}

Now that we know that any language translation induces a homomorphism between the substitution monoids of the same languages, we provide, as a next step,  a characterization of homomorphisms between substitution monoids that are induced by language translations.

We first observe that, for any language $\lang$, the set $V=\{\sigma \in \sfm \mid \sigma[\V] \subseteq \V\}$ is the universe of a submonoid $\VV$ of $\Sfm$. Furthermore, if $\VV$ and $\VV'$ are two such submonoids corresponding to the languages $\lang$ and $\lang'$, then $\VV\cong\VV'$. In the sequel, we identify all these monoids and denote them by $\VV$.

Let us also recall a notion from the theory of semigroups: if $B$ is a subset of a monoid $\AA$, an element $a \in A$ is called \emph{right zero for $B$} if $ba = a$ for all $b \in B$. The following result is trivial.

\begin{lemma}\label{abs}
Let $\AA$ and $\AA'$ be monoids, $B \subseteq A$ and $g: \AA \lto \AA'$ a monoid homomorphism. If $a \in A$ is a right zero for $B$, then $g(a)$ is a right zero for $g[B]$ in $\AA'$.
\end{lemma}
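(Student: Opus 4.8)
The plan is to prove Lemma \ref{abs} directly from the definitions, since both the hypothesis and the conclusion unwind to a single equation per element of the relevant subset. First I would fix an arbitrary element $c \in g[B]$, so that $c = g(b)$ for some $b \in B$. The goal is to show $c \cdot g(a) = g(a)$ in $\AA'$.

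The key computation is then immediate: since $g$ is a monoid homomorphism, it preserves the multiplication, so $c \cdot g(a) = g(b) \cdot g(a) = g(ba)$. By the hypothesis that $a$ is a right zero for $B$, and since $b \in B$, we have $ba = a$, whence $g(ba) = g(a)$. Combining these equalities yields $c \cdot g(a) = g(a)$, which is exactly the assertion that $g(a)$ is a right zero for $g[B]$ in $\AA'$. Since $c$ was an arbitrary element of $g[B]$, this completes the argument.

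There is no real obstacle here — the statement is, as the author notes, trivial, and the only point worth being careful about is that one quantifies over elements of $g[B]$ rather than over elements of $B$ directly; but every element of $g[B]$ has the form $g(b)$ with $b \in B$ by definition of the image, so the reduction is immediate. One could equally phrase the whole proof as the one-line chain $g(b)\,g(a) = g(ba) = g(a)$ for all $b \in B$, preceded by the remark that elements of $g[B]$ are precisely the $g(b)$; I would present it essentially in that compressed form, since spelling out more would only obscure how elementary the fact is.
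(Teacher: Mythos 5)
Your proof is correct and is exactly the obvious one-line computation $g(b)\,g(a)=g(ba)=g(a)$ for $b\in B$; the paper itself simply declares the lemma trivial and gives no proof, so there is nothing to compare beyond noting your argument is the intended one.
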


\begin{theorem}\label{transchar}
Let $h: \Sfm \lto \Sfmm$ be a monoid homomorphism. Then $h$ is induced by a language translation of $\lang$ into $\lang'$ if and only if it satisfies the following conditions:
\begin{enumerate}[(i)]
\item if $\rho$ is an idempotent element of $\Sfmm$, then $h^{-1}(\rho)$ is either empty or is comprised of idempotent elements of $\Sfm$;
\item $h^{-1}(\sigma) = \{\sigma\}$, for all $\sigma \in V$. 
\end{enumerate}
\end{theorem}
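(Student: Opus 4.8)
Throughout I will use two facts about a language translation $\tau$. First, Definition \ref{translation}(i) forces $\tau$ to fix every variable, since $x\in\tau^{-1}(x)$. Second, each derived operation $f'$ is given by a formula $\phi_{f'}$ whose occurring variables are \emph{precisely} $x_1,\dots,x_{\nu(f)}$, so that equality of two substitution instances $\phi_{f'}[x_i/A_i]=\phi_{f'}[x_i/B_i]$ forces $A_i=B_i$ for every $i\le\nu(f)$. The plan is to establish the two implications separately.

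For the forward direction, assume $h=\ov\tau$. Condition (ii) is immediate: if $\sigma\in V$ then $\ov\tau(\sigma)(x)=\tau(\sigma(x))=\sigma(x)$ because $\sigma(x)$ is a variable, so $\ov\tau$ fixes $V$ pointwise; and if $\ov\tau(\mu)=\sigma\in V$ then $\tau(\mu(x))=\sigma(x)\in\V$, whence $\mu(x)\in\tau^{-1}(\sigma(x))=\{\sigma(x)\}$ and $\mu=\sigma$. For condition (i), if $\rho:=\ov\tau(\sigma)$ is idempotent then $\ov\tau(\sigma^2)=\rho^2=\rho=\ov\tau(\sigma)$, i.e. $\tau(\sigma^2(x))=\tau(\sigma(x))$ for every $x$. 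The heart of the matter is the following claim, proved by induction on the complexity of a formula $\chi$: if $\tau(\sigma(\chi))=\tau(\chi)$ then $\sigma(\chi)=\chi$. The variable case uses $\tau^{-1}(x)=\{x\}$, the constant case is trivial, and for $\chi=f(\chi_1,\dots,\chi_n)$ the second fact above lets one pass from $\phi_{f'}[x_i/\tau(\sigma(\chi_i))]=\phi_{f'}[x_i/\tau(\chi_i)]$ to $\tau(\sigma(\chi_i))=\tau(\chi_i)$, hence to $\sigma(\chi_i)=\chi_i$, for all $i$. Applying the claim to $\chi=\sigma(x)$ gives $\sigma^2(x)=\sigma(x)$ for all $x$, so $\sigma$ is idempotent.

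For the converse, assume $h$ satisfies (i) and (ii); I will manufacture the translation. For a connective $f$ of arity $n=\nu(f)$ let $\mu_f\in\sfm$ be the constant substitution sending every variable to $f(x_1,\dots,x_n)$ (and $\mu_c$ the constant substitution to $c$ when $n=0$), and set $\phi_{f'}:=h(\mu_f)(x_1)$. Since $\mu_f$ is a right zero for $V_n:=\{\pi\in V:\pi(x_i)=x_i,\ 1\le i\le n\}$ and $h$ fixes $V$, Lemma \ref{abs} shows $h(\mu_f)$ is a right zero for $V_n$; evaluating this at well-chosen $\pi\in V_n$ shows that $h(\mu_f)$ is the constant substitution to $\phi_{f'}$ and that the variables of $\phi_{f'}$ lie in $\{x_1,\dots,x_n\}$. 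Condition (i) supplies the reverse containment: were $x_k$ absent from $\phi_{f'}$ for some $1\le k\le n$, then — for a variable permutation $\varpi$ with $\varpi(k)=1$ and identity elsewhere — the substitution $x_1\mapsto f(x_{\varpi(1)},\dots,x_{\varpi(n)})$ (identity elsewhere), which is not idempotent since $n\ge 1$, would be carried by $h$ to an idempotent substitution, contradicting (i). A similar use of (ii) rules out $\phi_{f'}$ being a single variable (if it were the variable $y$, then $h(\mu_f)=\pi_y\in V$ while $\mu_f\ne\pi_y$, violating (ii)), and the all-of-$\sfm$ right-zero property of $\mu_c$ shows $\phi_{c'}$ is variable-free. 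Hence the $\phi_{f'}$ are legitimate derived operations of arity $\nu(f)$, so the unique $\lang$-homomorphism $\tau:\Fml\to\mathbf{Fm}'_{\lang}$ extending $\id_{\V}$ exists and satisfies $\tau^{-1}(x)=\{x\}$, because a non-variable formula is sent to a formula having an $\lang'$-connective as its head; thus $\tau$ is a language translation.

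It remains to verify $\ov\tau=h$, which I will do by proving $h(\sigma)(x)=\tau(\sigma(x))$ for all $\sigma\in\sfm$ and $x\in\V$ by induction on the size of the formula $\sigma(x)$. Writing $\pi_x\in V$ for the constant substitution to $x$, one has $\sigma\circ\pi_x$ equal to the constant substitution to $\sigma(x)$; when $\sigma(x)=f(\psi_1,\dots,\psi_n)$ with $n\ge1$ this also equals $\theta\circ\mu_f$ where $\theta(x_i)=\psi_i$, so applying $h$ and evaluating at $x_1$ gives $h(\sigma)(x)=h(\theta)(\phi_{f'})=\phi_{f'}[x_i/h(\theta)(x_i)]$, and the inductive hypothesis applied to the smaller formulas $\psi_i=\theta(x_i)$ rebuilds $f'(\tau(\psi_1),\dots,\tau(\psi_n))=\tau(\sigma(x))$; the base cases ($\sigma(x)$ a variable or a constant) are the same manoeuvre combined with (ii) and the definition of $\phi_{f'}$. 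I expect the main obstacle to be this converse direction, and specifically the verification that the candidate formulas $\phi_{f'}$ are derived operations of exactly the arity $\nu(f)$ — that they involve all of $x_1,\dots,x_{\nu(f)}$ and no others — since this is precisely where conditions (i) and (ii) must be played against one another through Lemma \ref{abs}, and also the point at which the argument would collapse if derived operations were not required to depend on all of their arguments.
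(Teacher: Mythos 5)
Your overall strategy is the same as the paper's: the forward direction rests on $\tau^{-1}(x)=\{x\}$ plus a structural induction (fine, and it does require the same implicit convention the paper uses, namely that a derived operation of arity $n$ contains \emph{exactly} the variables $x_1,\dots,x_n$); the converse builds $\phi_{f'}$ by applying $h$ to the constant substitution onto $f(x_1,\dots,x_n)$, uses (ii) to fix $V$ pointwise and Lemma \ref{abs} to confine the variables of $\phi_{f'}$ to $\{x_1,\dots,x_n\}$, uses (i) to force every $x_i$ to occur, and then verifies $h=\ov\tau$ by induction. Your packaging of $\tau$ as the unique $\lang$-homomorphism extending the identity on variables into the algebra of derived operations is a small improvement, since it spares you the well-definedness check the paper performs for its definition $\tau(\phi)=h(\sigma_\phi)(x)$. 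Two local points need attention. First, a minor one: the constancy of $h(\mu_f)$ does not follow from the right-zero property with respect to $V_n$ (the identity lies in $V_n$, so that property says nothing about constancy); it follows instead from $\mu_f\circ\pi=\mu_f$ for all $\pi\in V$ together with (ii). This is harmless, since in the rest of your argument you only ever use the single value $h(\mu_f)(x_1)$.

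The second point is a genuine soft spot, and it sits exactly where the paper itself is most delicate. Your map $\varpi$ with ``$\varpi(k)=1$ and identity elsewhere'' is not a permutation, and under that literal reading the witness fails: writing $\varpi'\in V$ for the substitution $x_k\mapsto x_1$ fixing all other variables, one computes (via $\sigma\circ\kappa_{x_1}=\varpi'\circ\mu_f$, $\sigma\circ\kappa_y=\kappa_y$ for $y\neq x_1$, and (ii)) that $h(\sigma)$ sends $x_1$ to $\varpi'(\phi_{f'})=\phi_{f'}$ and fixes every other variable; since $x_1$ may well occur in $\phi_{f'}$, this $h(\sigma)$ need not be idempotent, so no contradiction with (i) arises. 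Concretely, if $\lang$ has one binary $f$, $\lang'$ one unary $g$, and $h$ is induced by the formula-level map sending $f(\phi,\psi)$ to $g$ applied to the image of $\phi$, then $\phi_{f'}=g(x_1)$, $k=2$, your $\sigma$ is $x_1\mapsto f(x_1,x_1)$, and $h(\sigma)\colon x_1\mapsto g(x_1)$ is not idempotent --- yet this $h$ does violate (i), just not through this witness. The repair stays entirely within your framework: take $\varpi$ to be the transposition of $1$ and $k$, so that $h(\sigma)(x_1)$ is $\phi_{f'}$ with $x_1$ renamed to the absent $x_k$, whence $h(\sigma)$ fixes every variable of its image and is idempotent while $\sigma$ is not; or, simplest of all, use the substitution sending the missing variable $x_k$ itself to $f(x_1,\dots,x_n)$ and fixing all other variables, whose $h$-image sends $x_k$ to $\phi_{f'}$ and is idempotent precisely because $x_k$ does not occur in $\phi_{f'}$. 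With that witness spelled out (and the routine $\kappa$-computations of $h(\sigma)$ on variables made explicit), your proof is complete; it is arguably cleaner at this step than the paper's composite $\sigma_f\circ\alpha\circ\sigma_f\circ\kappa_x$, whose $h$-image is a constant substitution and whose idempotency is asserted rather than evident.
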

\begin{proof}
If $h = \ov\tau$ for some translation $\tau: \fml \lto \fmll$, then it satisfies (ii) by Definition~\ref{translation}. To prevent any confusion, we remind the reader that we consider $\VV$ as a common submonoid of $\Sfm$ and $\Sfmm$, and therefore we use the same symbol for two substitutions in $\Sfm$ and $\Sfmm$ that belong to $\VV$.

In order to prove (i), let $\rho$ be an idempotent element of $\sfmm$, $\sigma \in \ov\tau^{-1}(\rho)$, and $\phi[x_1, \ldots, x_n] \in \sigma[\fml]$. It must be shown that $\sigma(\phi)=\phi$. Since $\tau$ is a translation and $\ov\tau(\sigma)_{\restr \V} = \tau \circ \sigma_{\restr \V} = \rho_{\restr \V}$ (Lemma \ref{qinq'}(i)), there exists $\phi'[x_1, \ldots, x_n] \in \rho[\fmll]$ such that that $\phi' = \tau(\phi)$. The idempotency of $\rho$ yields the equality $\rho(\phi') = \phi'$, which implies that $\rho(x_{j}) = x_{j}$ for all $j \leq n$; that is, all variables occurring in $\phi'$ are fixed by $\rho$.  Thus, invoking again the equalities $\ov\tau(\sigma)_{\restr \V} = \tau \circ \sigma_{\restr \V} = \rho_{\restr \V}$,  we have that $\sigma$ fixes all variables $x_1, \ldots, x_n$. It follows that $\sigma(\phi)=\phi$, and so $\sigma$ is idempotent.

Conversely, let us assume that $h$ satisfies (i) and (ii) and fix $x \in \V$. For every $\phi\in \Fml$, let $\sigma_\phi\in\sfm$ be a substitution such that $\sigma_\phi(x)=\phi$. Define $\tau: \Fml \lto \Fmll$ by $\tau(\phi) = h(\sigma_\phi)(x)$.

Note that $\tau$ is well defined. Indeed, let $\sigma_\phi, \sigma'_\phi\in \sfm$ such that $\sigma_\phi(x) = \sigma'_\phi(x) = \phi$ and let $\kappa_x$ be the substitution that sends all variables to $x$. Then $\sigma_\phi(x) = \sigma'_\phi(x) = \phi$ is equivalent to $\sigma_\phi\circ\kappa_x = \sigma'_\phi\circ\kappa_x$ in $\sfm$. It follows that $h(\sigma_\phi) \circ \kappa_x = h(\sigma_\phi\circ\kappa_x) = h(\sigma'_\phi\circ\kappa_x) = h(\sigma'_\phi)\circ\kappa_x$, which yields $h(\sigma_\phi)(x) = h(\sigma^\prime_\phi)(x)$. We shall prove that $\tau$ is a language translation and $h = \ov\tau$.

As a first step, assume that $c$ is a constant of $\lang$, and let $\sigma_c$ be a substitution that sends $x$ to $c$. For any substitution $\sigma \in \sfm$, $(\sigma \circ\sigma_c)(x) = \sigma_c(x)$; we assume, by contradiction, that the formula $h(\sigma_c)(x)$ contains a variable $y$. Let $\alpha$ be the substitution that sends $y$ to another variable $z \neq y$ and fix all the other variables. Then $\alpha \in V$ which implies $h(\alpha) = \alpha$; on the other hand $\tau((\alpha \circ\sigma_c)(x)) = (\alpha \circ h(\sigma_c))(x) \neq h(\sigma_c)(x) = \tau(\sigma_c(x))$, and this is absurd since $(\alpha\circ \sigma_c)(x) = \sigma_c(x)$. Therefore $h(\sigma_c)(x)$ cannot contain variables and $\lang'$ must have a definable constant.\footnote{We observe explicitly that, in order to define constants, a language must have at least a primitive constant.}

The case of unary connectives could be treated within the general case; nonetheless we prove it separately in order to give the reader a better clue of the argument.

Let $f$ be a unary connective of $\lang$ and $\sigma_f$ a substitution that sends $x$ to $f(x)$. Now let us consider the following subset of $V$:
$$B = \{\sigma \in V \mid \sigma(x) = x \textrm{ and } \sigma(y) \neq y, \forall y \in \V \setminus \{x\}\}.$$
Such a set is easily seen to be non-empty and it is clear that $\sigma_f\circ \kappa_x$ (i.e., the substitution that sends all variables to $f(x)$) is a right zero for $B$, hence $\sigma\circ \sigma_f\circ\kappa_x = \sigma_f\circ\kappa_x$ for all $\sigma \in B$. Then, if $B' = h[B]$, by Lemma~\ref{abs}, $h(\sigma_f) \circ \kappa_x$ is a right zero for $B'$, which means essentially that $\tau(\sigma_f(x))$ contains at most the unique variable $x$. On the other hand, if $\tau(\sigma_f(x))$ is a constant, then $h(\sigma_f)\circ \kappa_x$ is an idempotent element of $\sfmm$ while, for any $y \in \V$, $(\sigma_f\circ \kappa_x\circ \sigma_f\circ \kappa_x)(y) = f(f(x)) \neq f(x) = (\sigma_f\circ \kappa_x)(y)$, i.e., $\sigma_f\circ \kappa_x$ is not idempotent. But this is impossible by hypothesis (i); so $\tau(\sigma_f(x))$ cannot be a constant, and therefore it is a formula in the single variable $x$.

Now let $f$ be a connective of arity $n > 1$ and $\sigma_f$ be the substitution that sends $x$ to $f(x_1, \ldots, x_n)$, with $x, x_1, \ldots, x_n$ distinct variables, and acts like the identity on $\V \setminus \{x\}$; let also $X = \V \setminus \{x_1, \ldots, x_n\}$ and consider the subset $B$ of $V$ defined by
$$B = \{\sigma \in V \mid \sigma(x_i) = x_i, \forall i = 1, \ldots, n \textrm{ and } \sigma(y) \neq y, \forall y \in X\}.$$
$B$ is clearly non-empty and $\sigma_f\circ \kappa_x$ (i.e., the substitution sending all variables to $f(x_1, \ldots, x_n)$) is a right zero for $B$, hence $\sigma\circ \sigma_f\circ \kappa_x = \sigma_f\circ \kappa_x$ for all $\sigma \in B$. As in the case of unary connectives, if $B' = h[B]$, by Lemma~\ref{abs}, $h(\sigma_f)\circ \kappa_x$ is a right zero for $B'$, which means that $\tau(\sigma_f(x))$ contains at most the variables $x_1, \ldots, x_n$. Assuming that there exists $i \leq n$ such that $x_i$ is not in $\tau(\sigma_f(x))$, we can consider the substitution $\alpha$ that sends $x_i$ to $x$ and acts as the identity on $\V \setminus \{x_i\}$. Then $h(\sigma_f\circ \alpha\circ \sigma_f\circ \kappa_x)$ is easily seen to be idempotent while $\sigma_f\circ \alpha\circ \sigma_f\circ \kappa_x$ is not. Again, this is impossible by (i), therefore $\tau(\sigma_f(x))$ contains precisely the variables $x_1, \ldots, x_n$.

Now we must prove that $\tau$ is a language translation and $h = \ov\tau$. Condition (i) of Definition~\ref{translation} is an obvious consequence of hypotheses (i) and (ii): for any variable $y$, $\tau(y) = h(\kappa_y)(x) = \kappa_y(x) = y$. Regarding Definition~\ref{translation} (ii), we observe that, for any connective $f \in \lang$, $\tau(f(x_1, \ldots, x_n))$ is a formula $f'(x_1, \ldots, x_n)$ in the variables $x_1, \ldots, x_n$. So, let $\phi_1, \ldots, \phi_n \in \fml$ and $\sigma \in \sfm$ be the substitution that sends $x_i$ to $\phi_i$, for all $i = 1, \ldots, n$; we have $\tau(\phi_i) = \tau(\sigma\circ\kappa_{x_i}(x)) = (h(\sigma)\circ\kappa_{x_i})(x)$, for all $i \leq n$, whence
$$\begin{array}{l}
\tau(f(\phi_1, \ldots, \phi_n)) = h(\sigma\circ \sigma_f)(x) = (h(\sigma)\circ h(\sigma_f))(x) =\\
h(\sigma)(f'(x_1, \ldots, x_n)) = f'(h(\sigma)(x_1), \ldots, h(\sigma)(x_n)) = \\
f'((h(\sigma)\circ\kappa_{x_1})(x), \ldots, (h(\sigma)\circ\kappa_{x_n})(x)) = f'(\tau(\phi_1), \ldots, \tau(\phi_n)).
\end{array}$$

Last we must show that $h = \ov\tau$. Since substitutions are completely and univocally determined by their restriction to $\V$, we can represent any $\sigma \in \sfm$ by the family $\{\sigma\circ \kappa_{x_i}\circ \kappa_x\}_{i \in \N} = \{\sigma(x_i)\}_{i \in \N}$. Then $h(\sigma)$ is completely determined by
$$\{h(\sigma)(x_i)\}_{i \in \N} = \{h(\sigma\circ \kappa_{x_i}\circ \kappa_x)\}_{i \in \N} = \{h(\sigma)\circ \kappa_{x_i}\circ \kappa_x\}_{i \in \N} = \tau \circ \sigma_{\restr \V},$$
that is, $h = \ov\tau$. The theorem is proved. 
\end{proof}

\begin{theorem}\label{retraction}
Let $\tau: \lang \lto \lang'$ be a translation. Then $\ov\tau: \Sfm \lto \Sfmm$ is a monoid retraction if and only if $\tau$ is onto.
\end{theorem}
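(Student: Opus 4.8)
The forward implication is immediate: a retraction $\ov\tau\colon\Sfm\lto\Sfmm$ has, by definition, a monoid homomorphism $s\colon\Sfmm\lto\Sfm$ with $\ov\tau\circ s=\id_{\Sfmm}$, so it is surjective, and by Lemma~\ref{qinq'}(ii) this forces $\tau$ to be surjective. Hence the only substantive direction is the converse, and the plan is to exhibit an explicit monoid section of $\ov\tau$ by first constructing a right-inverse \emph{translation} $\tau'\colon\lang'\lto\lang$ with $\tau\circ\tau'=\id_{\fmll}$; then $\ov{\tau'}$ will be the section.

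Assume $\tau$ is onto. For each connective $g$ of $\lang'$, of arity $k$, surjectivity produces a formula $\phi_g\in\fml$ with $\tau(\phi_g)=g(x_1,\dots,x_k)$; since $\tau$ does not alter the set of variables occurring in a formula (the operations of $\mathbf{Fm}'_\lang$ being genuine $n$-ary derived operations), $\phi_g$ involves exactly $x_1,\dots,x_k$ and so determines a $k$-ary derived operation on $\fml$, while a $\tau$-preimage $\phi_c$ of a constant $c$ of $\lang'$ is closed, i.e.\ a derived constant. Equip $\fml$ with these derived operations, making it an $\lang'$-algebra, and let $\tau'\colon\fmll\lto\fml$ be the unique $\lang'$-homomorphism fixing every variable. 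Then (ii) of Definition~\ref{translation} holds by construction, and (i) holds because a non-variable $\lang'$-formula $g(\psi_1,\dots,\psi_k)$ is sent to $\phi_g[x_1/\tau'(\psi_1),\dots,x_k/\tau'(\psi_k)]$, whose leading symbol is the leading $\lang$-connective of $\phi_g$ — and $\phi_g$ is not a single variable, since $\tau(\phi_g)$ is not — so it is never a variable; thus $\tau'$ is a language translation of $\lang'$ into $\lang$. Finally $\tau\circ\tau'=\id_{\fmll}$ follows by induction on $\fmll$-formulas: it holds on variables and constants by the choice of the $\phi_g$, and in the inductive step Lemma~\ref{qinq'}(iii) pushes $\tau$ through the substitution $x_i\mapsto\tau'(\psi_i)$, yielding $\tau(\tau'(g(\psi_1,\dots,\psi_k)))=\tau(\phi_g)[x_1/\tau\tau'(\psi_1),\dots,x_k/\tau\tau'(\psi_k)]=g(\psi_1,\dots,\psi_k)$ via the induction hypothesis and $\tau(\phi_g)=g(x_1,\dots,x_k)$.

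By Lemma~\ref{qinq'}(i) applied to $\tau'$, the induced map $\ov{\tau'}\colon\Sfmm\lto\Sfm$ is a monoid homomorphism, and I claim it is a section of $\ov\tau$. Indeed, for $\sigma'\in\sfmm$ and any variable $x$ we have $\ov\tau(\ov{\tau'}(\sigma'))(x)=\tau(\ov{\tau'}(\sigma')(x))=\tau(\tau'(\sigma'(x)))=\sigma'(x)$, using $\tau\circ\tau'=\id_{\fmll}$; since a substitution is completely determined by its restriction to $\V$, this gives $\ov\tau\circ\ov{\tau'}=\id_{\sfmm}$. Hence $\ov\tau$ is a monoid retraction.

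The one point deserving care is the assertion used above: that a $\tau$-preimage of $g(x_1,\dots,x_k)$ may be taken to involve exactly $x_1,\dots,x_k$, and a $\tau$-preimage of a constant to be closed (so that, in particular, $\lang$ possesses a primitive constant whenever $\lang'$ does). This rests on a translation preserving the variable set of every formula, which is transparent once ``derived operation of arity $n$'' is read as ``operation given by a formula in exactly the variables $x_1,\dots,x_n$''. If one prefers the looser reading, the same conclusion is recovered by replacing a chosen preimage with its image under the substitution collapsing every variable outside $\{x_1,\dots,x_k\}$ onto $x_1$ (respectively, onto a primitive constant of $\lang$), which by Lemma~\ref{qinq'}(iii) does not change the value of $\tau$. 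Everything else is bookkeeping.
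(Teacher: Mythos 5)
Your proof is correct, and it follows the same overall skeleton as the paper (the forward direction via Lemma \ref{qinq'}(ii) is identical, and the converse is reduced in both cases to producing a right-inverse translation $\tau'$ with $\tau\circ\tau'=\id_{\fmll}$, whence $\ov\tau\circ\ov{\tau'}=\id_{\sfmm}$), but the key construction step is genuinely different. The paper argues by counting connectives: since $\tau(f(x_1,\dots,x_n))$ contains at least one $\lang'$-connective, $\tau$ cannot decrease the number of connectives, so a $\tau$-preimage of the one-connective formula $g(x_1,\dots,x_k)$ must itself be a single $\lang$-connective applied to variables; thus every connective of $\lang'$ is, up to the abuse of notation $\tau(f)=f'$, the image of an $\lang$-connective of the same arity, and $\tau'$ is defined connective-by-connective. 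You bypass this counting entirely: you let an arbitrary preimage $\phi_g$ of $g(x_1,\dots,x_k)$ serve as a \emph{derived operation} on $\fml$ and induce $\tau'$ freely as the $\lang'$-homomorphism fixing $\V$. Your route buys a more robust argument (it does not need the implicit assumptions behind the count, namely that each defining formula of a derived operation contains a connective and all of its variables get propagated), and you also supply the verifications the paper dismisses as immediate: Definition \ref{translation}(i) for $\tau'$, the induction giving $\tau\circ\tau'=\id_{\fmll}$ via Lemma \ref{qinq'}(iii), and the check on $\V$ that $\ov\tau\circ\ov{\tau'}=\id_{\sfmm}$. The paper's route, in exchange, yields the slightly stronger structural fact that the section may be taken to be a mere connective-renaming translation. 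One caveat on your closing remark: the ``looser reading'' fallback is the only fragile point, since in the nullary case the collapse requires $\lang$ to possess a primitive constant, which surjectivity of $\tau$ alone does not guarantee under that reading (indeed the statement itself can fail there); both your main argument and the paper's counting argument effectively presuppose the stricter reading, under which your treatment of constants is complete.
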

\begin{proof}
By Lemma \ref{qinq'}(ii), $\tau$ is onto if and only if $\ov\tau$ is onto, hence we need to prove that $\ov\tau$ is a retraction if and only if it is onto. One implication is trivial, namely, a retraction in a concrete category has a surjective underlying map.

Now assume $\tau$ to be surjective. By Definition \ref{translation}, for any connective $f$ of $\lang$, $\tau(f(x_1, \ldots, x_n))$ contains at least one connective of $\lang'$. Therefore, for any formula $\phi \in \fml$, the number of connectives in $\tau(\phi)$ is necessarily greater than or equal to the number of connectives in $\phi$. So, since $\tau$ is onto, for any connective $f' \in \lang'$ there exists a connective $f \in \lang$ having the same arity of $f'$ and such that (with an abuse of notation) $\tau(f) = f'$.

Then we can define\footnote{Note that the Axiom of Choice here is needed only in the case where $\lang'$ has infinitely many connectives.} a translation $\tau': \lang' \lto \lang$ by mapping each $f' \in \lang'$ in such an $f \in \lang$ and extending such a map recursively according to Definition \ref{translation}. It is immediate to verify that $\tau \circ \tau' = \id_{\fmll}$ and therefore $\ov\tau \circ \ov\tau' = \id_{\sfmm}$.
\end{proof}

We close this section by extending the Definition \ref{intlang}(i--iv) to the case of consequence relations that are action-invariant w.r.t. actions from different monoids and, therefore, to the case of systems with different underlying languages by virtue of Lemma \ref{qinq'}.

\begin{definition}\label{ints}
Let $\mathbf A$ and $\mathbf B$ be monoids, $S$ an $\mathbf A$-set, $T$ a $\mathbf B$-set, and $\vdash_S$ and $\vdash_T$ two action-invariant consquence relations on $S$ and $T$ respectively.

A map $\iota: S \lto \wp(T)$ is said to be \emph{action-invariant via the monoid homomorphism $h$} (\emph{$h$-action-invariant}, for short) if there exists a monoid homomorphism $h: \mathbf A \lto \mathbf B$ such that $\iota(a \cdot_S x) = h(a) \cdot_T \iota(x)$ for all $a \in A$ and $x \in S$.

A map $\iota: S \lto \wp(T)$ is called an \emph{interpretation} (respectively: a \emph{representation}) \emph{via $h$} of $\vdash_S$ in $\vdash_T$ if it is a weak interpretation (resp.: weak representation) and is $h$-action-invariant for some monoid homomorphism $h$.

A similarity given by two weak representations $\iota: S \lto \wp(T)$ and $\iota': T \lto \wp(S)$ is called an \emph{equivalence via $h$ and $k$} if there exist two monoid homomorphisms $h: \mathbf A \lto \mathbf B$ and $k: \mathbf B \lto \mathbf A$ such that $\iota$ is $h$-action-invariant and $\iota'$ is $k$-action invariant.
\end{definition}

\section{Some preliminaries on quantales and quantale modules}
\label{ressec}

It is known that an efficient abstract algebraic representation for a propositional deductive system is that of a quantale module \cite{galtsi,thesis}. Indeed, as we are going to see, in such a representation consequence relations are easily described as congruences and consequence operators are precisely the nuclei on quantale modules, i.e. special closure operators whose definition will be recalled in this section. The details of this representation shall be recalled in the next section; here, as a preliminary step, we recall the notions and results on quantales and their modules that we will need.

If $\la X, \leq \ra$ and $\la Y, \leq \ra$ are two posets, a map $f: X \lto Y$ is said to be \emph{residuated} provided there exists a map $g: Y \lto X$ such that, for all $x \in X$ and for all $y \in Y$, $f(x) \leq y$ if and only if $x \leq g(y)$. It is immediate to verify that the map $g$ is uniquely determined; we call it the \emph{residual map} or the \emph{residuum} of $f$, and denote it by $f_*$. The pair $(f, f_*)$ is said to be \emph{adjoint} or to form a \emph{Galois connection between $X$ and $Y$}.

We also recall that the category $\SL$ of \emph{sup-lattices} have complete lattices as objects and residuated maps (or, that is the same, maps preserving arbitrary joins) as morphisms. For any set $S$, the free sup-lattice over $S$ is $\la \wp(S), \bigcup\ra$ equipped with the singleton map, that is, with the map which sends each $x \in S$ to its corresponding singleton in $\wp(S)$.

A map $\g: X \lto X$ is called a \emph{closure operator} if it is order preserving, extensive and idempotent. It is well-known that a map $f: X \lto Y$ is residuated if and only if it preserves all existing joins. Moreover, its residuum $f_*$ preserves all existing meets and $\g = f_* \circ f$ is a closure operator on $X$.

\begin{definition}\label{quantale}
A \emph{(unital) quantale} is a monoid in the category of sup-lattices. In other words, a quantale is an algebraic structure $\QQ = \la Q, \bigvee, \cdot, 1 \ra$ such that
\begin{enumerate}[(Q1)]
\item $\la Q, \bigvee \ra$ is a sup-lattice,
\item $\la Q, \cdot, 1 \ra$ is a monoid,
\item the multiplication distributes over arbitrary joins both from the left and from the right.
\end{enumerate}
$\QQ$ is said to be \emph{commutative} if so is the multiplication and \emph{integral} if $1 = \bigvee Q$.
\end{definition}

A homomorphism between two quantales $\QQ$ and $\RR$ is a map $f: Q \lto R$ that preserves arbitrary joins and the monoid structure. So, in particular, a quantale homomorphisms is a residuated monoid homomorphism and the map $\g \bydef f_* \circ f: Q \lto Q$ is a closure operator with the following additional property: $\g(a) \g(b) \leq \g(a b)$. An operator with these features is also called \emph{quantic nucleus} and induces a quantale structure on its image by setting, for all $a, b \in \g[Q]$ and $X \subseteq \g[Q]$,
$${}^\g\bigvee X = \g\left({}^\QQ\bigvee X\right) \quad \textrm{ and } \quad a \cdot_\g b = \g(ab).$$
We refer the reader to \cite{krupas,rose} for further information on quantales.

\begin{definition}\label{modules}
Let $\QQ$ be a quantale. A (left) \emph{$\QQ$-module} $\MM$, or a \emph{module over $\QQ$}, is a sup-lattice $\la M, \bigvee \ra$ endowed with an external binary operation, called \emph{scalar multiplication}, $\cdot: (q,x) \in Q \times M \lmapsto q \cdot x \in M$, such that the following conditions hold:
\begin{enumerate}[(M1)]
\item $(ab) \cdot x = a \cdot (b \cdot x)$, for all $a, b \in Q$ and $x \in M$;
\item the external product distributes over arbitrary joins both in $\QQ$ and $\MM$, i.e., for all $A \cup \{a\} \subseteq Q$ and for all $X \cup \{x\} \subseteq M$
\begin{itemize}
\item[-] $a \cdot \bigvee X = \bigvee \{a \cdot y \mid y \in X\}$,
\item[-] $\left(\bigvee A\right) \cdot x = \bigvee \{b \cdot x \mid b \in A\}$,
\end{itemize}
\item $1 \cdot x = x$.
\end{enumerate}
\end{definition}

Condition (M2) can be expressed, equivalently, as follows:
\begin{enumerate}
\item[(M2')]The scalar multiplication is residuated with respect to the lattice order in $M$, i.e.,
\begin{itemize}
\item[-] for all $a \in Q$, the map $a \cdot \<: x \in M \lmapsto a \cdot x \in M$ is residuated,
\item[-] for all $x \in M$, the map $\< \cdot x: a \in Q \lmapsto a \cdot x \in M$ is residuated.
\end{itemize}
\end{enumerate}

Then (M2') defines another external operation over $M$, with coefficients in $Q$, and a map from $M \times M$ to $Q$:
$$\begin{array}{c}
\ust: (a,x) \in Q \times M \lmapsto a \ust x = (a \cdot \<)_*(x) \in M,\\
 \\
\ost: (x,y) \in M \times M \lmapsto x \ost y = (\< \cdot x)_*(y) \in Q.
\end{array}$$ 
Let $\QQ$ be a quantale and $X$ be an arbitrary non-empty set. We can consider the sup-lattice $\la Q^X, \bigvee \ra$, with pointwise defined join, and define a scalar multiplication in $Q^X$ as follows:
$$\cdot : (a,f) \in Q \times Q^X \lmapsto a \cdot f \in Q^X,$$
with the map $a \cdot f$ defined as $(a \cdot f)(x) = a \cdot f(x)$ for all $x \in X$. Then $Q^X$ is a left $\QQ$-module~---~denoted by $\QQ^X$~---~and, for all $a \in Q$, $f \in Q^X$ and $x \in X$, $(a \ust f)(x) = a \under f(x)$. It is well-known that, if $1 \neq \bot$ in $\QQ$, $\QQ^X$ is the free $\QQ$-module over $X$. In what follows, for any subset $S$ of $Q^X$, we shall denote by $\QQ \cdot S$ the submodule of $\QQ^X$ generated by $S$.

Let $\AA = \la A, \cdot, 1 \ra$ be a monoid. Then $\wp(\AA) = \la \wp(A), \bigcup, \cdot, \{1\} \ra$ is a quantale, with the multiplication defined by $B C = \{bc \mid b \in B, c \in C\}$, for all $B, C \subseteq A$. It is in fact the free quantale over the monoid $\AA$. Indeed, looking at $\Q$ as a concrete category over the one of monoids $\cat M$, such a construction defines a functor which is easily seen to be left adjoint to the forgetful functor from $\Q$ to $\cat M$.

Now let $S$ be an $\AA$-set. Then the sup-lattice $\wp(\mathbf{S}) = \la \wp(S), \bigcup\ra$ is a $\wp(\AA)$-module with the scalar multiplication defined by $B \cdot X = \{b \cdot x \mid b \in B, x \in X\}$, for all $B \subseteq A$ and for all $X \subseteq S$. Indeed it is easy to see that such an operation preserves arbitrary unions in both arguments.

The definition and properties of right $\QQ$-modules are completely analogous. If $\QQ$ is commutative, the concepts of right and left $\QQ$-modules coincide and we will say simply $\QQ$-modules. If a sup-lattice $\MM$ is both a left $\QQ$-module and a right $\RR$-module~---~over two given quantales $\QQ$ and $\RR$~---~we will say that $\MM$ is a \emph{$(\QQ,\RR)$-bimodule} if the following associative law holds:
\begin{equation}\label{bimodule}
(a \cdot_Q x) \cdot_R b = a \cdot_Q (x \cdot_R b), \quad \textrm{for all } \ x \in M, \ a \in Q, \ b \in R.
\end{equation}

\begin{definition}\label{nucleus}
A \emph{(left) $\QQ$-module nucleus} (or \emph{structural closure operator}) $\g$ over $\MM$ is a closure operator such that $a \cdot \g(x) \leq \g(a \cdot x)$, for all $a \in Q$ and $x \in M$. If $\g$ is a nucleus, we will denote by $M_\g$ the $\g$-closed system $\g[M]$, and it is easy to see that $M_\g$ is closed under arbitrary meets; moreover, $\MM_\g$ is a left $\QQ$-module itself, as shown in Theorem \ref{homoandnuclei}.
\end{definition}

\begin{remark*}
Henceforth, in all the definitions and results that can be stated both for left and right modules, we will refer generically to ``modules''~--- without specifying left or right~--- and we will use the notations of left modules.
\end{remark*}

If $\MM$ and $\NN$ are modules over the same quantale $\QQ$, a map $f: M \lto N$ is a \emph{$\QQ$-module homomorphism} if it is a sup-lattice homomorphism, i.e., a residuated map, and preserves the scalar multiplication: $f(a \cdot_M x) = a \cdot_N f(x)$ for all $a \in Q$ and $x \in M$.

\begin{theorem}\cite{galtsi}\label{homoandnuclei}
Let $\QQ$ be a quantale, $\MM$ and $\NN$ $\QQ$-modules, and $f \in \hom_\QQ(\MM,\NN)$. Then $f_* \circ f$ is a nucleus on $\MM$.

Conversely, if $\g$ is a nucleus on $\MM$, then $M_\g$~--- with the join $\bigvee_\g = \g \circ \bigvee$ and the external product $\cdot_\g = \g \circ \cdot$~--- is a $\QQ$-module (denoted by $\MM_\g$), and there exists $f_\g \in \hom_\QQ(\MM, \MM_\g)$ such that ${f_\g}_* \circ f_\g = \g$.
\end{theorem}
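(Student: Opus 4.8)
The plan is to treat the two directions separately, exploiting the general fact recorded just before the statement: if $f:\MM\lto\NN$ is residuated (a sup-lattice homomorphism) then $\g\bydef f_*\circ f$ is a closure operator on $\MM$. So for the first direction the only thing left to check is the structural inequality $a\cdot\g(x)\leq\g(a\cdot x)$ for all $a\in Q$, $x\in M$. First I would unwind the adjunction: $a\cdot\g(x)\leq\g(a\cdot x)=f_*(f(a\cdot x))$ holds iff $f(a\cdot\g(x))\leq f(a\cdot x)$, by the defining property of the residuum $f_*$. Since $f$ is a $\QQ$-module homomorphism, $f(a\cdot\g(x))=a\cdot f(\g(x))=a\cdot f(f_*(f(x)))$, and $f(a\cdot x)=a\cdot f(x)$; so it suffices to show $a\cdot f(f_*(f(x)))\leq a\cdot f(x)$. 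This follows from $f\circ f_*\leq\id_N$ (the counit of the adjunction) together with monotonicity of the scalar multiplication $a\cdot\<$. That finishes the first assertion.

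For the converse, suppose $\g$ is a nucleus on $\MM$. I would first observe that $M_\g=\g[M]=\{x\in M\mid \g(x)=x\}$ is closed under arbitrary meets taken in $\MM$: this is a standard property of closure operators (the meet of $\g$-closed elements is again $\g$-closed because $\g$ is extensive and monotone), so $\la M_\g,\le\ra$ is itself a complete lattice, with meets inherited from $\MM$ and joins given by $\bigvee_\g X=\g\bigl(\bigvee X\bigr)$. Next I would define the scalar multiplication $\cdot_\g=\g\circ\cdot$ on $M_\g$ and verify conditions (M1)--(M3) of Definition \ref{modules}. For (M3), $1\cdot_\g x=\g(1\cdot x)=\g(x)=x$ for $x\in M_\g$. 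For the distributivity clauses of (M2), one computes $a\cdot_\g\bigvee_\g X=\g\bigl(a\cdot\g(\bigvee X)\bigr)$; using the nucleus inequality $a\cdot\g(y)\le\g(a\cdot y)$ together with idempotency and monotonicity of $\g$, this collapses to $\g\bigl(a\cdot\bigvee X\bigr)=\g\bigl(\bigvee\{a\cdot y\mid y\in X\}\bigr)=\bigvee_\g\{a\cdot_\g y\mid y\in X\}$, and the clause with the join over coefficients is handled symmetrically. The associativity law (M1), $(ab)\cdot_\g x=a\cdot_\g(b\cdot_\g x)$, is the one place where the structural inequality is genuinely needed: expanding the right-hand side gives $\g\bigl(a\cdot\g(b\cdot x)\bigr)$, and one sandwiches $\g(ab\cdot x)=\g\bigl(a\cdot(b\cdot x)\bigr)\le\g\bigl(a\cdot\g(b\cdot x)\bigr)\le\g\bigl(\g(a\cdot(b\cdot x))\bigr)=\g(ab\cdot x)$, the first inequality by monotonicity and extensivity of $\g$, the second by the nucleus property applied to $b\cdot x$ and then monotonicity of $\g$. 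Hence equality holds throughout.

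Finally I would produce the homomorphism $f_\g$. The natural candidate is the corestriction-composed-with-closure map $f_\g:\MM\lto\MM_\g$, $x\mapsto\g(x)$, viewed as landing in $M_\g$. It preserves arbitrary joins essentially by definition of $\bigvee_\g$: $f_\g\bigl(\bigvee X\bigr)=\g\bigl(\bigvee X\bigr)=\bigvee_\g\{\g(y)\mid y\in X\}=\bigvee_\g f_\g[X]$, so it is a sup-lattice morphism, hence residuated. It preserves scalars since $f_\g(a\cdot x)=\g(a\cdot x)$ while $a\cdot_\g f_\g(x)=\g(a\cdot\g(x))$, and the two are equal by the same sandwich argument used for (M1). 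Its residuum is the inclusion $M_\g\hookrightarrow M$ (for $x\in M$, $y\in M_\g$, one has $\g(x)\le y$ iff $x\le y$, using extensivity of $\g$ and $\g(y)=y$), so ${f_\g}_*\circ f_\g$ sends $x$ to the inclusion of $\g(x)$, i.e.\ equals $\g$ as an operator on $\MM$.

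The main obstacle is bookkeeping rather than depth: one must be careful that $\bigvee_\g$ is the correct join in $M_\g$ (it is, because $M_\g$ is a meet-subsemilattice of a complete lattice, so it is complete and its joins are the $\g$-closures of the ambient joins), and one must deploy the structural inequality $a\cdot\g(x)\le\g(a\cdot x)$ at exactly the points where $\g$ needs to commute past the scalar action — namely in (M1), in the distributivity of $\cdot_\g$, and in scalar-preservation of $f_\g$. Everywhere else only the closure-operator axioms are used.
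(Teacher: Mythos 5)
Your proof is correct and complete: the adjunction argument for the first direction, the sandwich $\g(a\cdot(b\cdot x))\le\g(a\cdot\g(b\cdot x))\le\g(a\cdot(b\cdot x))$ for (M1), distributivity, and scalar-preservation of $f_\g$, and the identification of ${f_\g}_*$ with the inclusion of $M_\g$ into $M$ are exactly the standard argument. The paper itself gives no proof of this statement (it is quoted from \cite{galtsi}), and your route coincides with the usual one there; the only steps left implicit, such as $\g\bigl(\bigvee_{y\in X}\g(y)\bigr)=\g\bigl(\bigvee X\bigr)$ and the fact that the order on $M_\g$ is the restriction of that of $M$, are routine closure-operator facts.
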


For further insights on algebraic and categorical properties of quantale modules the reader may refer to \cite{pase,russo,thesis,solo}.

\section{Interpretations between systems with the same language}
\label{inter}

As we pointed out in the previous section, if $\AA$ is a monoid and $S$ is an $\AA$-set, $\wp(\AA)$ is a quantale and $\wp(\SS)$ is a $\wp(\AA)$-module. As a consequence of this simple observation, Lemma \ref{oprel}, and condition (\ref{subinv}), we obtain the following immediate result.
\begin{proposition}\label{oprelnuc}
Let $\AA$ be a monoid and $S$ an $\AA$-set. If $\vdash$ is an action-invariant consequence relation on $S$, then the map \begin{equation}\label{subvdash}
\g_\vdash: X \in \wp(S) \lmapsto \{u \in S \mid X \vdash u\} \in \wp(S)
\end{equation}
is a $\wp(\AA)$-module nucleus on $\wp(\SS)$. Conversely, if $\gamma$ is a $\wp(\AA)$-module nucleus on $\wp(\SS)$, the relation $\vdash_\g \subseteq \wp(S) \times S$ defined by
\begin{equation}\label{subgamma}
X \vdash_\g u \textrm{ iff } u \in \g(X)
\end{equation}
is an action-invariant consequence relation on $S$.
\end{proposition}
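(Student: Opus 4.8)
The plan is to deduce the statement directly from Lemma~\ref{oprel} together with one simple observation: for the quantale $\wp(\AA)$ and the module $\wp(\SS)$, the nucleus inequality of Definition~\ref{nucleus} need only be checked on \emph{singleton} scalars, and in that case it is literally condition~(\ref{subinv}). So almost everything is already packaged in the lemma, and the new content is just the translation between ``structural closure operator'' and ``action-invariant''.

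First, suppose $\vdash$ is an action-invariant consequence relation on $S$. By Lemma~\ref{oprel} the map $\g_\vdash$ of~(\ref{subvdash}) is exactly the consequence operator $\cns_\vdash$, hence a closure operator on $\wp(S)$: order-preserving, extensive and idempotent. In particular it is a closure operator on the underlying sup-lattice $\la \wp(S), \bigcup\ra$ of $\wp(\SS)$, so it remains only to verify the structural condition $B \cdot \g_\vdash(X) \subseteq \g_\vdash(B \cdot X)$ for all $B \subseteq A$ and $X \subseteq S$. Here I would use that the scalar multiplication of $\wp(\SS)$ distributes over arbitrary unions in the scalar argument, so $B \cdot \g_\vdash(X) = \bigcup_{b \in B}\{b\} \cdot \g_\vdash(X)$ and $B \cdot X = \bigcup_{b\in B}\{b\}\cdot X$; together with monotonicity of $\g_\vdash$ this reduces the inequality to the singleton case $B = \{b\}$. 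But $\{b\}\cdot \g_\vdash(X) = b \cdot \cns_\vdash(X)$ and $\{b\}\cdot X = b\cdot X$, so the singleton case is precisely~(\ref{subinv}), which holds because $\vdash$ satisfies~(\ref{sentailstructural}). Thus $\g_\vdash$ is a $\wp(\AA)$-module nucleus on $\wp(\SS)$.

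Conversely, let $\g$ be a $\wp(\AA)$-module nucleus on $\wp(\SS)$. As a closure operator on $\wp(S)$ it is, by Lemma~\ref{oprel}, the consequence operator of the consequence relation $\vdash_\g$ defined in~(\ref{subgamma}). To see that $\vdash_\g$ is action-invariant, instantiate the structural inequality of Definition~\ref{nucleus} at the singleton scalar $\{a\}$, obtaining $a \cdot \g(X) \subseteq \g(a\cdot X)$ for all $a \in A$ and $X \subseteq S$; this is exactly~(\ref{subinv}) for $\g$, and by the equivalence noted just before that display it says that $\vdash_\g$ satisfies~(\ref{sentailstructural}).

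I do not anticipate a real obstacle: the whole argument is unwinding definitions. The only point deserving a word of care is the passage from arbitrary scalar subsets $B \subseteq A$ to singletons in the nucleus condition, which relies on distributivity of scalar multiplication over unions and on monotonicity of the closure operator; once that is in place, ``structural closure operator'' and ``action-invariance'' are two names for the same requirement. (Should one also want it, the fact that the two assignments $\vdash \mapsto \g_\vdash$ and $\g \mapsto \vdash_\g$ are mutually inverse is inherited verbatim from the corresponding statement in Lemma~\ref{oprel}.)
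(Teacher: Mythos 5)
Your proof is correct and follows exactly the route the paper intends: the paper states Proposition~\ref{oprelnuc} as an immediate consequence of Lemma~\ref{oprel}, condition~(\ref{subinv}), and the $\wp(\AA)$-module structure of $\wp(\SS)$, which is precisely what you spell out. Your reduction of the nucleus inequality from arbitrary scalar sets $B \subseteq A$ to singletons, via distributivity of the scalar multiplication over unions and monotonicity of the closure operator, is the only detail the paper leaves implicit, and you handle it correctly.
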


Specializing Proposition \ref{oprelnuc} to the concrete case of a substitution invariant deductive system $\la S, \vdash\ra$ over a set of sequents on a given language $\lang$, we have that $\wp(\SS)$ is a $\wp(\Sfm)$-module and the consequence relations on $S$ are in one-one correspondence with the $\wp(\Sfm)$-module nuclei on $\wp(\SS)$ or, equivalently by Theorem \ref{homoandnuclei}, with the quotients of such a module.

In \cite{galtsi}, Galatos and Tsinakis used this approach\footnote{Actually they did not use exactly the notation and terminology of quantales and quantale modules but the categories they introduced are precisely the same.} to show that, given two substitution invariant deductive systems $\la S, \vdash_S \ra$ and $\la T, \vdash_T\ra$ over a given language $\lang$, the fact that the $\wp(\Sfm)$-modules $\wp(\SS)_{\g_{\vdash_S}}$ and $\wp(\TT)_{\g_{\vdash_T}}$ are isomorphic is a necessary and sufficient condition for the existence of an equivalence between the two corresponding deductive systems. This result is a consequence of the following Propositions \ref{gt1}, \ref{gt2}, and \ref{gt3}, and we will extend it to interpretations and representations in Theorem \ref{gt}.
\begin{proposition}\label{gt1}
A $\QQ$-module $\MM$ is cyclic and projective if and only if it is isomorphic to the module $\QQ \cdot u$ for some multiplicatively idempotent element $u$ of $Q$.
\end{proposition}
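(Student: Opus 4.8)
The plan is to prove the two implications separately, keeping track of what ``cyclic'' and ``projective'' mean in the category of $\QQ$-modules. Recall that an object $\MM$ is \emph{projective} precisely when it is a retract of a free object, i.e.\ there is a free $\QQ$-module $\QQ^X$ and module homomorphisms $\MM \xrightarrow{j} \QQ^X \xrightarrow{p} \MM$ with $p \circ j = \id_\MM$; and ``cyclic'' should be taken to mean generated by a single element $x_0 \in M$, equivalently $\MM = \QQ \cdot x_0$, equivalently there is a surjective homomorphism $q: \QQ = \QQ^{\{*\}} \twoheadrightarrow \MM$ (sending $1 \mapsto x_0$).

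For the direction ($\Leftarrow$), suppose $\MM \cong \QQ \cdot u$ with $u \in Q$ and $u^2 = u$. Cyclicity is clear, since $u \cdot 1 = u$ generates $\QQ \cdot u$ (note $a \cdot u = au \in \QQ u$ for all $a$, and $\QQ u$ is already a submodule because $(au)u = a(uu) = au$). For projectivity I would exhibit $\QQ u$ as a retract of the free module $\QQ = \QQ^{\{*\}}$: take $p: \QQ \to \QQ u$, $a \mapsto au$, which is a surjective $\QQ$-module homomorphism (it preserves joins since multiplication distributes over joins, and $p(a \cdot b) = abu = a \cdot p(b)$), and take $j: \QQ u \to \QQ$ to be the inclusion. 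Then $p(j(au)) = p(au) = (au)u = au$, so $p \circ j = \id$, and $\QQ u$ — hence $\MM$ — is projective.

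For the direction ($\Rightarrow$), suppose $\MM$ is cyclic and projective. Cyclicity gives a surjection $q: \QQ \twoheadrightarrow \MM$ with $q(1) = x_0$ a generator. Projectivity gives a section: since $q$ is an epi onto $\MM$ and $\MM$ is projective, the identity $\id_\MM$ lifts through $q$, i.e.\ there is $s: \MM \to \QQ$ with $q \circ s = \id_\MM$. Set $u \bydef s(x_0) \in Q$. The key computations are: first, $q(u) = q(s(x_0)) = x_0$; second, using that $s$ is a $\QQ$-module homomorphism and $q \circ s = \id$, for any $a \in Q$ we get $q(au) = q(a \cdot s(x_0)) = a \cdot q(s(x_0)) = a \cdot x_0$. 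I would then show $\MM \cong \QQ u$ via the corestriction of $q$ to $\QQ u$: it is surjective onto $\MM$ because $q(au) = a \cdot x_0$ and the $a \cdot x_0$ exhaust $M$ by cyclicity; it is injective because if $q(au) = q(bu)$ then applying $s$ and using $q \circ s = \id$ together with $su = u^2$ (wait — more carefully:) applying $s$ gives $s(a\cdot x_0) = s(b \cdot x_0)$, i.e.\ $a \cdot s(x_0) = b \cdot s(x_0)$, i.e.\ $au = bu$. Finally idempotency of $u$: $u = s(x_0) = s(q(u)) $, and since $q(u) = u \cdot x_0$ (the $a = u$ case above) we get $u = s(u \cdot x_0) = u \cdot s(x_0) = u^2$. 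This closes the argument.

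The main obstacle is getting the categorical characterization of projectivity right in $\QQ\Mod$ and confirming that $\QQ$ itself ($ = \QQ^{\{*\}}$) is the relevant free object on one generator, so that projective-plus-cyclic really does force $\MM$ to be a retract of $\QQ$; once that is in place, the construction of the idempotent $u = s(x_0)$ and the verification $u^2 = u$, $\MM \cong \QQ u$ are short. A secondary point to be careful about is the hypothesis $1 \neq \bot$ needed for $\QQ^X$ to be the genuine free module (as flagged in Section \ref{ressec}); I would either assume it here or note that the degenerate case is trivial.
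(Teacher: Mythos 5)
Your argument is correct, but it takes a different route from the paper only in the sense that the paper offers no proof at all: Proposition \ref{gt1} is there justified by a one-line citation, namely that it restates the equivalence of conditions 4 and 5 of Theorem 5.7 in \cite{galtsi}. What you supply is a self-contained version of the standard argument behind that citation: for ($\Leftarrow$) you exhibit $\QQ u$ as a retract of $\QQ$ via $a \mapsto au$ and the inclusion, using $u^2=u$; for ($\Rightarrow$) you split the canonical surjection $q\colon \QQ \to \MM$, $q(a)=a\cdot x_0$, by projectivity, set $u=s(x_0)$, and verify $u^2=u$ and $\MM\cong\QQ u$. All the supporting facts you lean on do hold in $\QQ\Mod$: the cyclic submodule generated by $x_0$ is exactly $\{a\cdot x_0 \mid a\in Q\}$ (distributivity over arbitrary joins), $\QQ$ is projective as a module over itself (lift $f$ by choosing a preimage of $f(1)$), retracts of projectives are projective, and a bijective module homomorphism is an isomorphism since the inverse of a bijective join-preserving map preserves joins. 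Two small points of polish: your parenthetical ``the $a=u$ case above'' is slightly off, since the displayed identity was $q(au)=a\cdot x_0$, whose $a=u$ instance is $q(u^2)=u\cdot x_0$; what you actually need, $q(u)=u\cdot x_0$, follows directly from $q(a)=a\cdot q(1)=a\cdot x_0$, after which $x_0=q(u)=u\cdot x_0$ and $u=s(x_0)=s(u\cdot x_0)=u\cdot s(x_0)=u^2$ goes through exactly as you wrote. And your caveat about $1\neq\bot$ is handled as you suggest: if $1=\bot$ every module is trivial and the statement is vacuously true, while the projectivity of $\QQ$ itself never needs the freeness of $\QQ^X$.
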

\begin{proof}
The assertion is simply a reformulation of the equivalence between conditions 4 and 5 of \cite[Theorem 5.7]{galtsi}.
\end{proof}

\begin{proposition}\label{gt2}
Let $\{\MM_i\}_{i \in I}$ be a family of $\QQ$-modules. The (co)product $\MM$ of the family $\{\MM_i\}_{i \in I}$ is projective if and only if $\MM_i$ is projective for all $i \in I$.
\end{proposition}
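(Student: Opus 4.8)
The plan is to exploit the fact that in a category of modules over a quantale, (co)products coincide — just as in the category of sup-lattices, where the direct sum and direct product agree — so that the single object $\MM = \bigoplus_{i\in I}\MM_i = \prod_{i\in I}\MM_i$ comes equipped with both the coproduct injections $\kappa_j : \MM_j \lto \MM$ and the product projections $\pi_j : \MM \lto \MM_j$, and these satisfy $\pi_j\circ\kappa_j = \id_{\MM_j}$ together with $\pi_j\circ\kappa_i = 0$ for $i\neq j$. Recall that projectivity of an object $\PP$ means: for every surjective $\QQ$-module homomorphism $e : \AA \lto \BB$ and every $g : \PP \lto \BB$, there is $h : \PP \lto \AA$ with $e\circ h = g$. (Here "surjective" should be read as the appropriate epimorphism in $\QQ\Mod$; since the forgetful functor to $\SL$ reflects/preserves the relevant factorizations, surjectivity of the underlying map is what is used.)

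First I would prove the easy direction: if $\MM$ is projective, then each $\MM_j$ is projective. Given a surjection $e : \AA \lto \BB$ and a map $g_j : \MM_j \lto \BB$, form $g_j\circ\pi_j : \MM \lto \BB$; by projectivity of $\MM$ there is $h : \MM \lto \AA$ with $e\circ h = g_j\circ\pi_j$. Then $h\circ\kappa_j : \MM_j \lto \AA$ satisfies $e\circ(h\circ\kappa_j) = g_j\circ\pi_j\circ\kappa_j = g_j$, so $\MM_j$ is projective. Note this direction is exactly the standard module-theoretic argument and uses only the existence of the injection $\kappa_j$ and the retraction identity $\pi_j\kappa_j = \id$; no special feature of quantale modules is needed.

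For the converse, suppose each $\MM_i$ is projective, and let $e : \AA \lto \BB$ be a surjection and $g : \MM \lto \BB$ a $\QQ$-module homomorphism. For each $i$, apply projectivity of $\MM_i$ to the map $g\circ\kappa_i : \MM_i \lto \BB$ to obtain $h_i : \MM_i \lto \AA$ with $e\circ h_i = g\circ\kappa_i$. Now use the universal property of the coproduct: the family $\{h_i\}_{i\in I}$ induces a unique $\QQ$-module homomorphism $h : \MM \lto \AA$ with $h\circ\kappa_i = h_i$ for all $i$. It remains to check $e\circ h = g$; both are $\QQ$-module homomorphisms out of the coproduct $\MM$, and $(e\circ h)\circ\kappa_i = e\circ h_i = g\circ\kappa_i$ for every $i$, so by the uniqueness clause of the coproduct's universal property $e\circ h = g$. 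Hence $\MM$ is projective. I expect the main subtlety to be a bookkeeping point rather than a deep obstacle: one must be careful that the "surjections" relevant to projectivity in $\QQ\Mod$ are precisely the maps with surjective underlying function (equivalently, the regular epimorphisms), so that the factorizations supplied by projectivity of the $\MM_i$ really do the job, and that coproducts in $\QQ\Mod$ exist and have the stated universal property — both of which are standard facts about module categories over a quantale (see \cite{galtsi,thesis}) and may be invoked here.
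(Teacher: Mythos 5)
Your proof is correct and follows essentially the same route as the paper: the ``only if'' direction is the retract argument (each $\MM_i$ is a retract of $\MM$ since product and coproduct coincide in $\QQ\Mod$), and your ``if'' direction is precisely the standard coproduct-of-projectives argument that the paper does not spell out but delegates to \cite[Lemma 5.12]{galtsi}.
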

\begin{proof}
One implication is a consequence of the fact that product and coproduct of the same family of modules have the same underlying object, namely, the direct product with pointwise defined operations, and therefore each $\MM_i$ is a retract of $\MM$. The converse implication is proved in \cite[Lemma 5.12]{galtsi}.
\end{proof}

\begin{proposition}\label{gt3}
For any propositional language $\lang$, if $S$ is a set of sequents of a single type, then $\wp(\SS)$ is a cyclic projective $\wp(\Sfm)$-module, if $S$ has more than one type, then $\wp(\SS)$ is the coproduct of cyclic projective modules. In particular $\wp(\SS)$ is always a projective module.
\end{proposition}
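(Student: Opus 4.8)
The plan is to reduce everything to Propositions \ref{gt1} and \ref{gt2}: in the single-type case I would realize $\wp(\SS)$ explicitly as $\wp(\Sfm)\cdot u$ for a concrete multiplicatively idempotent $u$, and in the general case I would split $S$ into its types and apply Proposition \ref{gt2}.

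\emph{Single type.} Assume $S$ consists of all sequents of a fixed type $(m,n)$ and set $k\bydef m+n\geq 1$. First I would identify $S$ with $\fml^{\,k}$, sending the sequent $\phi_1,\dots,\phi_m\To\phi_{m+1},\dots,\phi_{m+n}$ to the tuple $(\phi_1,\dots,\phi_k)$; under this identification the substitution action is componentwise, $\sigma\cdot(\phi_1,\dots,\phi_k)=(\sigma(\phi_1),\dots,\sigma(\phi_k))$. Next, let $\e\in\sfm$ be the substitution with $\e(x_i)=x_i$ for $i\le k$ and $\e(x_j)=x_1$ for $j>k$; a direct check gives $\e\circ\e=\e$, so $u\bydef\{\e\}$ is a multiplicatively idempotent element of the quantale $\wp(\Sfm)$ (whose product is $BC=\{\sigma\circ\tau\mid\sigma\in B,\ \tau\in C\}$). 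Thus $\wp(\Sfm)\cdot u=\{\,A\cdot u\mid A\subseteq\sfm\,\}$ with $A\cdot u=\{\sigma\circ\e\mid\sigma\in A\}$. The crux is that every $\sigma\circ\e$ satisfies $(\sigma\circ\e)(x_i)=\sigma(x_i)$ for $i\le k$ and $(\sigma\circ\e)(x_j)=\sigma(x_1)$ for $j>k$, so it is completely determined by the tuple $(\sigma(x_1),\dots,\sigma(x_k))\in\fml^{\,k}$, and conversely every tuple is realized (e.g. by the substitution $\rho$ with $\rho(x_i)=\phi_i$ for $i\le k$ and $\rho(x_j)=\phi_1$ for $j>k$, which satisfies $\rho\circ\e=\rho$). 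Hence $C\longmapsto\{(\rho(x_1),\dots,\rho(x_k))\mid\rho\in C\}$ is a bijection $\wp(\Sfm)\cdot u\to\wp(\SS)$; it plainly preserves arbitrary unions, and it preserves the scalar action because $\e$ is absorbed on the right ($\tau\circ\sigma\circ\e\circ\e=\tau\circ\sigma\circ\e$) and the action on $\fml^{\,k}$ is componentwise. Therefore $\wp(\SS)\cong\wp(\Sfm)\cdot u$, and Proposition \ref{gt1} gives that $\wp(\SS)$ is cyclic and projective.

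\emph{Several types and conclusion.} If $S$ is closed under type and the set $I$ of types occurring in $S$ has more than one element, write $S=\bigsqcup_{\theta\in I}S_\theta$, with $S_\theta$ the set of all sequents of type $\theta$. Since the substitution action preserves the type of a sequent, each $S_\theta$ is a $\wp(\Sfm)$-sub-set of $\wp(\SS)$, and $\wp(\SS)\cong\prod_{\theta\in I}\wp(\mathbf S_\theta)$ as sup-lattices with pointwise operations; as noted in the proof of Proposition \ref{gt2}, this common object is simultaneously the product and the coproduct of the family $\{\wp(\mathbf S_\theta)\}_{\theta\in I}$ in the category of $\wp(\Sfm)$-modules. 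By the single-type case each $\wp(\mathbf S_\theta)$ is cyclic projective, so $\wp(\SS)$ is the coproduct of cyclic projective modules. Finally, projectivity of $\wp(\SS)$ follows in all cases: for a single type it is already cyclic projective, and otherwise Proposition \ref{gt2} gives projectivity of the coproduct from projectivity of the summands. The only step that is not pure bookkeeping is the identification $\wp(\Sfm)\cdot u\cong\wp(\SS)$ in the single-type case — precisely, seeing that the members of $\wp(\Sfm)\cdot u$ are exactly the sets of substitutions that are ``frozen'' outside $x_1,\dots,x_k$ (constantly equal to their value at $x_1$ there), so that such a set encodes nothing more than a subset of $\fml^{\,k}\cong S$; once this is in place, Propositions \ref{gt1} and \ref{gt2} do the rest.
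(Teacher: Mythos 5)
Your proof is correct: realizing the single-type module $\wp(\SS)$ (with $k=m+n$) as $\wp(\Sfm)\cdot\{\e\}$ for the idempotent substitution $\e$ that freezes the variables beyond $x_k$, and then handling several types via the product-equals-coproduct decomposition and Proposition \ref{gt2}, is a sound reduction to Propositions \ref{gt1} and \ref{gt2}. The paper itself gives no argument here beyond citing Corollary 5.9 and Theorem 5.13 of Galatos--Tsinakis, and your construction is essentially the one underlying those cited results, so you have simply filled in the details the paper outsources rather than taken a different route.
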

\begin{proof}
See Corollary 5.9 and Theorem 5.13 of \cite{galtsi}.
\end{proof}

In particular Proposition \ref{gt3} asserts that $\wp(\Fml)$ and $\wp(\EQ)$ are cyclic projective modules over $\wp(\Sfm)$.

The part of next result concerning equivalences is a direct consequence of the results of \cite[Sections 5 and 6]{galtsi}.
\begin{theorem}\label{gt}
Let $\lang$ be a propositional language, $\cat S =  \la S, \vdash_S\ra$ and $\cat T = \la T, \vdash_T\ra$ deductive systems over the sets of sequents $S$ and $T$ respectively, and $\gamma = \gamma_{\vdash_S}$ and $\d = \gamma_{\vdash_T}$. Then $\cat S$ is interpretable in (respectively: representable in, equivalent to) $\cat T$ if and only if there exists a $\wp(\Sfm)$-module homomorphism (resp.: injective homomorphism, isomorphism) $f: \wp(\SS)_\g \lto \wp(\TT)_\d$.
\end{theorem}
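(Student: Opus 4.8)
The plan is to translate the logical statement about interpretations into the module-theoretic language via Proposition~\ref{oprelnuc} and Theorem~\ref{homoandnuclei}, and then exploit the fact that $\wp(\SS)$ is a projective $\wp(\Sfm)$-module (Proposition~\ref{gt3}), so that maps out of the quotient $\wp(\SS)_\g$ can be lifted to maps out of $\wp(\SS)$ itself. The key observation, which I would record first, is the following dictionary: an action-invariant map $\iota: S \lto \wp(T)$ is exactly the same data as a $\wp(\Sfm)$-module homomorphism $\hat\iota: \wp(\SS) \lto \wp(\TT)$, by setting $\hat\iota(X) = \bigcup_{x \in X} \iota(x) = \iota[X]$ (union because $\wp(\SS)$ is the free sup-lattice on $S$); action-invariance of $\iota$ corresponds precisely to preservation of the $\wp(\Sfm)$-scalar multiplication, and preservation of arbitrary unions is automatic. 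Moreover, under the correspondence of Proposition~\ref{oprelnuc}, the condition $X \vdash_S u \Rightarrow \iota[X] \vdash_T \iota(u)$ is equivalent to $\d(\hat\iota(X)) \supseteq \hat\iota(\g(X))$, i.e., to the statement that $\hat\iota$ carries $\g$-closed joins compatibly into $\d$-closed ones.

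First I would prove the ``only if'' direction. Given an interpretation $\iota$, form $\hat\iota: \wp(\SS) \lto \wp(\TT)$ as above; it is a $\wp(\Sfm)$-module homomorphism. Composing with the canonical surjection $f_\d: \wp(\TT) \lto \wp(\TT)_\d$ of Theorem~\ref{homoandnuclei} and observing that $f_\d \circ \hat\iota$ factors through $\g$ (this is exactly where the interpretation property $X \vdash_S u \Rightarrow \iota[X] \vdash_T \iota(u)$ is used: it forces $f_\d\hat\iota$ to be constant on $\g$-classes, hence to factor through $f_\g$ by the universal property of the nucleus quotient), we obtain the desired $f: \wp(\SS)_\g \lto \wp(\TT)_\d$. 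For the representation case, the biconditional $X \vdash_S u \liff \iota[X] \vdash_T \iota(u)$ translates into $f$ being order-reflecting on the image, and since $\wp(\SS)_\g$ is a complete lattice and $f$ preserves joins, order-reflecting is equivalent to injectivity. For the equivalence case, given mutually inverse-up-to-$\dashv\vdash$ representations $\iota, \iota'$ one builds $f$ and $g$ as above and checks $g \circ f$ and $f \circ g$ are identities using~(\ref{eqeq}) and~(\ref{eqeq'}); this gives an isomorphism.

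For the ``if'' direction, suppose $f: \wp(\SS)_\g \lto \wp(\TT)_\d$ is a $\wp(\Sfm)$-module homomorphism. Since $\wp(\SS)$ is projective (Proposition~\ref{gt3}) and $f_\g: \wp(\SS) \lto \wp(\SS)_\g$ is an epimorphism in $\wp(\Sfm)\Mod$, the composite $f \circ f_\g: \wp(\SS) \lto \wp(\TT)_\d$ followed by a lift through the epimorphism $f_\d: \wp(\TT) \lto \wp(\TT)_\d$ yields a $\wp(\Sfm)$-module homomorphism $h: \wp(\SS) \lto \wp(\TT)$ with $f_\d \circ h = f \circ f_\g$. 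Define $\iota(u) = h(\{u\})$ for $u \in S$; then $\iota$ is action-invariant (being the restriction along singletons of a module homomorphism, using that $\{\sigma\cdot u\} = \{\sigma\} \cdot \{u\}$ in the $\wp(\Sfm)$-action), and the identity $f_\d \circ h = f \circ f_\g$ unwinds to exactly the interpretation condition~(\ref{neqint}). If $f$ is injective one checks the lift $h$ can be chosen so that $f_\d \circ h = f \circ f_\g$ still reflects the order down to $\g$ and $\d$, giving~(\ref{eqint}); if $f$ is an isomorphism one applies the same argument to $f^{-1}$ and verifies the two resulting representations satisfy~(\ref{eqeq}) via the fact that $f\circ f^{-1} = \id$ and $f^{-1}\circ f = \id$ on the quotients.

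The main obstacle I anticipate is the representation (order-reflecting) part of the lifting in the ``if'' direction: projectivity gives a module homomorphism $h$ lifting $f$, but a priori only $\iota[X] \vdash_T \iota(u) \Leftarrow X \vdash_S u$ is automatic, while the converse requires knowing that $h$ itself — not merely $f_\d \circ h$ — reflects the relevant order, which is not guaranteed by an arbitrary projective lift. The fix is to work with the closure operators directly: $f$ injective means $\d(\hat\iota(X)) = \d(\hat\iota(Y))$ forces $\g(X) = \g(Y)$, and one must show the specific lift $h$ (or a suitable modification of it, e.g. replacing $h(Z)$ by $\bigcup\{h(\{z\}) : z \in Z\}$, which is automatically the singleton-extension and hence genuinely determined by $\iota$) transports this faithfulness. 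Making this bookkeeping precise, and similarly checking the naturality/compatibility conditions needed for the equivalence case, is the delicate part; everything else is a routine transcription through Propositions~\ref{gt1}--\ref{gt3} and Theorem~\ref{homoandnuclei}.
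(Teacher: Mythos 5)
Your proposal is correct and follows essentially the same route as the paper: in one direction it defines $f$ on closed sets by $f(\g(\Phi)) = \d(\iota[\Phi])$ (your factorization of $f_\d\circ\hat\iota$ through $f_\g$ is just the universal-property phrasing of the paper's well-definedness check), and in the other it lifts $f\circ f_\g$ through $f_\d$ via projectivity of $\wp(\SS)$ (Proposition \ref{gt3}) and restricts to singletons. The ``obstacle'' you flag in the representation case is not a genuine one, and your proposed fix is exactly the paper's argument: since the lift $h$ is itself a sup-lattice homomorphism on the free sup-lattice $\wp(\SS)$, one automatically has $h(X)=\bigcup_{u\in X}h(\{u\})=\iota[X]$ (no modification of $h$ is needed), so $\iota[X]\vdash_T\iota(u)$ is literally the statement $f(\g(\{u\}))\subseteq f(\g(X))$, and injectivity of the join-preserving $f$ reflects the order, yielding (\ref{eqint}).
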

\begin{proof}
\textrm{}
\begin{itemize}
\item[$(\Longrightarrow)$]
\end{itemize}
Let $\iota: S \lto \wp(T)$ be an interpretation. We define
$$f: \wp(\SS)_\g \lto \wp(\TT)_\d$$
by
\begin{equation}\label{f}
f(\g(\Phi)) = \d(\iota[\Phi]), \quad \textrm{for all $\Phi \in \wp(S)$}.
\end{equation}
If $\Phi, \Psi \in \wp(S)$ are such that $\g(\Phi) = \g(\Psi)$, then $\Phi \vdash_S \Xi$ for all $\Xi \subseteq \g(\Psi)$, hence $\iota[\Phi] \vdash_T \iota[\Xi]$ which means that $\d(\iota[\Psi]) \subseteq \d(\iota[\Phi])$. The converse inclusion can be proved analogously, so $\d(\iota[\Psi]) = \d(\iota[\Phi])$ and $f$ is a well-defined function.

Let now $\{\Phi_i\}_{i \in I} \subseteq \wp(S)$. We have
$$\begin{array}{l}
f\left({}^\g\bigvee_{i \in I}\g(\Phi_i)\right) = f\left(\g\left(\bigcup_{i \in I} \Phi_i\right)\right) \\
= \d\left(\iota\left[\bigcup_{i \in I} \Phi_i\right]\right) = \d\left(\bigcup_{i \in I}\iota[\Phi_i]\right) \\
= {}^{\d}\bigvee_{i \in I} \d\left(\iota[\Phi_i]\right) = {}^{\d}\bigvee_{i \in I} f(\g(\Phi_i)).
\end{array}$$
Then $f$ is a sup-lattice homomorphism.

Last we need to prove that $f$ is a $\wp(\Sfm)$-module homomorphism, so let $\Sigma \in \wp(\sfm)$ and $\Phi \in \wp(S)$. We have
$$\begin{array}{l}
f(\Sigma \cdot_\g \g(\Phi)) = f(\g(\Sigma \cdot \Phi)) = \d(\iota[\Sigma \cdot \Phi]) \\
= \d(\Sigma \cdot \iota[\Phi]) = \Sigma \cdot_\d \d(\iota[\Phi]) \\
= \Sigma \cdot_\d f(\g(\Phi)).
\end{array}$$

Now, let us assume that $\iota$ is a representation. In order to prove that $f$ is injective, let us consider $\Phi, \Psi \in \wp(S)$ such that $\g(\Phi) \neq \g(\Psi)$; we can assume, without losing generality, that there exists $\phi \in \g(\Phi) \setminus \g(\Psi)$. Then $\Psi \nvdash_S \phi$, hence $\iota[\Psi] \nvdash_T \iota(\phi)$ by (\ref{eqint}). It follows that $f(\g(\Phi)) \neq f(\g(\Psi))$ and therefore $f$ is injective.

The case of equivalence follows immediately from the one of representation and (\ref{eqeq}).

\begin{itemize}
\item[$(\Longleftarrow)$]
\end{itemize}
Conversely, let us assume the existence of $f$ and consider the following diagram of $\wp(\Sfm)$-module morphisms
\begin{equation}\label{concinterdiag1}
\xymatrix{
\wp(\mathbf{S}) \ar@{-->}[rr]^{g} \ar@{->>} [dd]_{\g} & & \wp(\mathbf{T}) \ar@{->>} [dd]^\d\\
&&\\
\wp(\mathbf{S})_\g \ar[rr]_{f} & & \wp(\mathbf{T})_\d,
\\
}
\end{equation}
which can be completed with a morphism $g$ because $\wp(\mathbf{S})$ is a projective module by Proposition \ref{gt3}. Moreover, since powersets are free sup-lattices, $g$ is uniquely determined by its restriction to the singletons, that is, by the map $\iota: \phi \in S \lmapsto g(\{\phi\}) \in \wp(T)$, and such a map is obviously action-invariant.

Now let $\Phi \cup \{\psi\} \subseteq S$. By definition $\Phi \vdash_S \psi$ iff $\{\psi\} \subseteq \g(\Phi)$. So $\g(\{\psi\}) \subseteq \g(\Phi)$ and therefore $f(\g(\{\psi\})) = \d(g(\{\psi\})) \subseteq \d(g(\Phi)) = f(\g(\Phi))$. Hence $\iota[\Phi] \vdash_T \iota(\psi)$, i.e. $\iota$ is an interpretation. Moreover, if $f$ is injective we have: $\g(\{\psi\}) \subseteq \g(\Phi)$ iff $f(\g(\{\psi\})) = \d(g(\{\psi\})) \subseteq \d(g(\Phi)) = f(\g(\Phi))$ iff $\iota[\Phi] \vdash_T \iota(\psi)$, that is, $\iota$ is a representation. Last, if $f$ is an isomorphism with inverse $f^{-1}$, we can use these two isomorphisms and proceed as above in order to define $\iota$ and $\iota'$ in such a way that Definition \ref{intlang}(iv) is satisfied.

The theorem is proved.
\end{proof}

\section{Restriction and extension of scalars}
\label{tens}

In Section \ref{transsec} we defined translations between propositional languages; we also proved that any translation induces a homomorphism between the substitution monoids of the two languages (Lemma \ref{qinq'}) and, conversely, a monoid homomorphism between monoids of substitutions is induced by a language translation if and only if it satisfies certain conditions (Theorem \ref{transchar}).

As already pointed out, any monoid homomorphism $f: \AA \lto \BB$ extends to a unique quantale homomorphism $h_f: \wp(\AA) \lto \wp(\BB)$; therefore a language translation from $\lang$ to $\lang'$, in the algebraic representation introduced in the previous section, can be viewed as a quantale morphism between $\wp(\Sfm)$ and $\wp(\Sfmm)$.

It is well-known in ring theory that a homomorphism between two rings canonically induces a functor (in the opposite direction) between the corresponding categories of modules. In this section we shall extend such a construction and its properties to quantales. Most of the results on tensor products we are going to present in this section already appeared in \cite{thesis}. Moreover, some of them are analogous to the corresponding classical results (not only in ring theory) \cite{ban,lin,joyaltierney} or can be deduced as special cases of more general ones obtained either in the general categorical setting \cite{kat1,kat2} or --- more recently --- in the context of quantaloids \cite{galgil}. Nonetheless, all the proofs presented are necessary for the sake of readability. Indeed, the proof of Theorem \ref{subs} and the applications presented in Section \ref{interabs} need some technical details which appear here and there in those proofs.

\begin{lemma}\label{indmod}
Let $\QQ$ and $\RR$ be quantales and $h: \QQ \lto \RR$ a quantale homomorphism. Then $h$ induces a structure of $\QQ$-module on each $\RR$-module; in particular, $h$ induces structures of $\QQ$-bimodule, $\RR$-$\QQ$-bimodule and $\QQ$-$\RR$-bimodule on $\RR$ itself. Moreover, w.r.t. such a structure of $\QQ$-module on $\RR$ and the one of free cyclic $\QQ$-module on $\QQ$, $h$ is also a $\QQ$-module homomorphism.
\end{lemma}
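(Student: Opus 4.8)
The plan is to unwind the definitions and check each claimed structure directly, since the content is entirely ``transport of structure along $h$''. Given an $\RR$-module $\MM$ with scalar multiplication $\cdot_R : R \times M \lto M$, I would define a candidate $\QQ$-action by $q \ast x \bydef h(q) \cdot_R x$ for $q \in Q$ and $x \in M$. The verification of (M1)--(M3) of Definition \ref{modules} is then a short computation using that $h$ is a monoid homomorphism and preserves joins: $(q_1 q_2) \ast x = h(q_1 q_2) \cdot_R x = (h(q_1) h(q_2)) \cdot_R x = h(q_1) \cdot_R (h(q_2) \cdot_R x) = q_1 \ast (q_2 \ast x)$ gives (M1); $1_Q \ast x = h(1_Q) \cdot_R x = 1_R \cdot_R x = x$ gives (M3); and residuation/join-preservation in both arguments (condition (M2), or equivalently (M2')) follows because $h$ preserves arbitrary joins, so $\bigl(\bigvee A\bigr) \ast x = h\bigl(\bigvee A\bigr) \cdot_R x = \bigl(\bigvee h[A]\bigr) \cdot_R x = \bigvee\{h(a) \cdot_R x \mid a \in A\} = \bigvee\{a \ast x \mid a \in A\}$, and the left-argument distributivity is inherited verbatim from $\MM$ as an $\RR$-module.

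Next I would specialize to $\MM = \RR$. Here $\RR$ is already a left $\RR$-module and a right $\RR$-module via its own multiplication, so the construction above makes $\RR$ into a left $\QQ$-module (by $q \ast r = h(q) r$) and, symmetrically, into a right $\QQ$-module (by $r \ast q = r\, h(q)$). The bimodule laws of the form \eqref{bimodule} that must be checked are the three associativities: $(q \ast r) s = h(q) r s = q \ast (r s)$ for the $(\QQ,\RR)$-bimodule structure; $r (s \ast q) = r\, s\, h(q) = (r s) \ast q$ for the $(\RR,\QQ)$-bimodule structure; and $(q \ast r) \ast q' = h(q)\, r\, h(q') = q \ast (r \ast q')$ for the $\QQ$-bimodule structure. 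Each is an immediate consequence of associativity of multiplication in $\RR$; no use of join-preservation is needed beyond what was already verified to make each one-sided action a module action.

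Finally, for the last sentence, I would regard $\QQ$ as the free cyclic $\QQ$-module over itself (scalar action = multiplication in $\QQ$) and $\RR$ as a $\QQ$-module via $\ast$ as above, and check that $h : Q \lto R$ is a $\QQ$-module homomorphism. It is a sup-lattice homomorphism by hypothesis (quantale homomorphisms preserve arbitrary joins, hence are residuated), and it preserves the scalar multiplication because $h(q \cdot a) = h(q) h(a) = q \ast h(a)$ for all $q, a \in Q$, again using that $h$ is a monoid homomorphism.

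I do not expect any genuine obstacle here: the lemma is a purely formal ``restriction of scalars'' statement, and every clause reduces to the fact that $h$ preserves the multiplication, the unit, and arbitrary joins. The only point requiring a little care is bookkeeping — keeping the left/right and $\QQ$/$\RR$ roles straight in the three bimodule associativities and confirming, via the remark following Definition \ref{nucleus}, that it suffices to carry out the argument for left modules with the right-module and bimodule cases being notational variants.
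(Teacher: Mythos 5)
Your proposal is correct and follows essentially the same route as the paper: transport the $\RR$-action along $h$ via $q\ast x = h(q)\cdot x$, note that the bimodule structures on $\RR$ come from $\RR$ being a bimodule over itself (you simply spell out the associativity checks the paper leaves implicit), and verify the final claim by the computation $h(qa)=h(q)h(a)=q\ast h(a)$ together with join-preservation of $h$. No gaps.
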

\begin{proof}
Let $\NN = \la N, \bigvee \ra$ be an $\RR$-module. It is easy to verify that
\begin{equation}\label{starh}
\cdot_h: (a, x) \in Q \times N \lmapsto h(a) \cdot x \in N
\end{equation}
makes $\NN$ into a $\QQ$-module, henceforth denoted by $\NN_h$. Since $\RR$ is a bimodule over itself, the second part of the assertion follows immediately.

Last, for any $a, b \in Q$, $h(ab) = h(a)h(b) = a \cdot_h h(b)$, hence $h$ is a $\QQ$-module homomorphism between $\QQ$ and $\RR_h$. 
\end{proof}

The operation performed in (\ref{starh}) is well-known in the theory of ring modules as \emph{restricting the scalars along $h$} (see, e.g., \cite{ban}). In fact it defines a faithful functor
\begin{equation}\label{subh}
\begin{array}{cccc}
\sub_h: & \RR\Mod & \lto & \QQ\Mod \\
& \NN & \lmapsto & \NN_h
\end{array}
\end{equation}
having both a right and a left adjoint. This property was proved to hold for commutative quantales by Joyal and Tierney \cite{joyaltierney}. In the general case, however, the situation is precisely the same, as we are going to show.

\begin{definition}\label{tensormq}
Let $\QQ$ be a quantale, $\MM_1 = \la M_1, \bigvee_1\ra$ a right $\QQ$-module, $\MM_2 = \la M_2, \bigvee_2\ra$ a left $\QQ$-module, and $\LL = \la L, \bigvee\ra$ a sup-lattice. Then $\MM_1 \times \MM_2$ is a $\QQ$-bimodule, where the join is defined componentwise, and left and right scalar multiplications are defined, for all $(x,y) \in M_1 \times M_2$ and $a \in Q$, respectively as follows:
\begin{enumerate}
\item[]$a \cdot_l (x,y) = (x,a \cdot_2 y)$,
\item[]$(x,y) \cdot_r a = (x \cdot_1 a,y)$.
\end{enumerate}

A map $f: M_1 \times M_2 \lto L$ is said to be a \emph{$\QQ$-bimorphism} if it preserves arbitrary joins in each variable separately
\begin{eqnarray*}
&& f\left(\bigvee X, y\right) = \bigvee_{x \in X} f(x,y), \\
&& f\left(x,\bigvee Y\right) = \bigvee_{y \in Y} f(x,y), \\
\end{eqnarray*}
and
\begin{equation}\label{bimproduct}
f(x, a \cdot_2 y) = f(x \cdot_1 a,y).
\end{equation}

The \emph{tensor product} $\MM_1 \tensor_\QQ \MM_2$, of the $\QQ$-modules $\MM_1$ and $\MM_2$, is the codomain of the universal $\QQ$-bimorphism $\MM_1 \times \MM_2 \lto \MM_1 \tensor_\QQ \MM_2$. In other words, we call tensor product of $\MM_1$ and $\MM_2$ a sup-lattice $\MM_1 \tensor_\QQ \MM_2$ equipped with a $\QQ$-bimorphism $\tau: \MM_1 \times \MM_2 \lto \MM_1 \tensor_\QQ \MM_2$ such that, for any sup-lattice $\LL$ and any $\QQ$-bimorphism $f: \MM_1 \times \MM_2 \lto \LL$, there exists a unique sup-lattice homomorphism $k_f: \MM_1 \tensor_\QQ \MM_2 \lto \LL$ satisfying $k_f \circ \tau = f$.
\end{definition}

\begin{theorem}\label{tensormqexists}
Let $\MM_1$ be a right $\QQ$-module and $\MM_2$ a left $\QQ$-module. Then the tensor product $\MM_1 \tensor_\QQ \MM_2$ of the $\QQ$-modules $\MM_1$ and $\MM_2$ exists. It is, up to isomorphisms, the quotient $\wp(\MM_1 \times \MM_2)/\theta_R$ of the free sup-lattice generated by $M_1 \times M_2$ with respect to the (sup-lattice) congruence relation generated by the set
\begin{equation}\label{R}
R = \left\{
	\begin{array}{l}
		\left(\left\{\left(\bigvee X, y\right)\right\}, \bigcup_{x \in X}\{(x,y)\}\right) \\
		\left(\left\{\left(x, \bigvee Y\right)\right\}, \bigcup_{y \in Y}\{(x,y)\}\right) \\
		\left(\{(x \cdot_1 a, y)\}, \{(x,a \cdot_2 y)\}\right) \\
	\end{array} \right\vert
	\left.
	\begin{array}{l}
	X \subseteq M_1, y \in M_2 \\
	Y \subseteq M_2, x \in M_1 \\
	a \in Q \\
	\end{array}
	\right\}.
\end{equation}
\end{theorem}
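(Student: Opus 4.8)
The plan is to verify directly that the sup-lattice $\MM_1 \times \MM_2/\theta_R$, together with the composite of the coproduct insertion $M_1 \times M_2 \lto \MM_1 \times \MM_2$ and the quotient map $q: \MM_1 \times \MM_2 \lto \MM_1 \times \MM_2/\theta_R$, satisfies the universal property stated in Definition \ref{tensormqexists}. First I would let $\tau$ denote this composite map $M_1 \times M_2 \lto \MM_1 \times \MM_2/\theta_R$ and check that it is a $\QQ$-bimorphism. Preservation of arbitrary joins in each variable separately follows from the first two families of generators in $R$: the pair $\left(\left(\bigvee X, y\right), \bigvee_{x \in X}(x,y)\right)$ lies in $\theta_R$, so after applying $q$ the images coincide, and similarly in the second variable. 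Equation (\ref{bimproduct}) — that $\tau(x \cdot_1 a, y) = \tau(x, a \cdot_2 y)$ — comes from the third family of generators. Here one must be mildly careful that the joins being referred to are joins in the coproduct sup-lattice $\MM_1 \times \MM_2$ (i.e.\ in $M_1 \amalg M_2$, the sup-lattice coproduct of the underlying sup-lattices) rather than in $M_1$ or $M_2$; the elements $(x,y)$ are understood as images of generators under the coproduct insertions, and the congruence $\theta_R$ is taken inside that coproduct.

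Next I would establish the universal factorization. Given a sup-lattice $\LL$ and a $\QQ$-bimorphism $f: M_1 \times M_2 \lto \LL$, I would first use the universal property of the sup-lattice coproduct $\MM_1 \times \MM_2$ to obtain a sup-lattice homomorphism $\tilde f: \MM_1 \times \MM_2 \lto \LL$ extending $f$ (this is where the coproduct, rather than the product, is the right object: $f$ need not preserve joins jointly, only separately, so it is the data of two join-preserving maps on $M_1$ and $M_2$ once one variable is held at a generator — more precisely, $f$ restricted to each factor through generators gives maps that assemble via the coproduct). Then I would check that $\tilde f$ collapses every generating pair in $R$: for the first two families this is exactly separate join-preservation of $f$, and for the third it is condition (\ref{bimproduct}). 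Since $\theta_R$ is the sup-lattice congruence \emph{generated} by $R$, and $\tilde f$ is a sup-lattice homomorphism whose kernel congruence contains $R$, it contains $\theta_R$; hence $\tilde f$ factors through $q$ as $k_f \circ q$ for a unique sup-lattice homomorphism $k_f : \MM_1 \times \MM_2/\theta_R \lto \LL$. Then $k_f \circ \tau = k_f \circ q \circ (\text{insertion}) = \tilde f \circ (\text{insertion}) = f$, as required. Uniqueness of $k_f$ follows because $\tau$ is jointly surjective onto a generating set: the image of $\tau$ together with arbitrary joins generates $\MM_1 \times \MM_2/\theta_R$ as a sup-lattice (every element of the coproduct is a join of images of generators from $M_1$ and $M_2$, and these are hit by $\tau(\cdot, y)$ and $\tau(x, \cdot)$), so any two sup-lattice homomorphisms agreeing on $\operatorname{im}\tau$ agree everywhere.

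Finally I would note that uniqueness up to isomorphism of the tensor product is automatic from the universal property, so the displayed description $\MM_1 \times \MM_2/\theta_R$ is justified. The main obstacle I anticipate is bookkeeping around the coproduct: one must be precise that $R$ lives in the sup-lattice coproduct of $M_1$ and $M_2$ (where an element is a formal join of "pure" elements $(x,\ast)$ and $(\ast,y)$), that the pairs $(x,y)$ appearing in $R$ denote images of these under the structure maps, and that "the congruence generated by $R$" means the smallest sup-lattice congruence — equivalently, the kernel congruence of the smallest nucleus whose closed set respects $R$. Once this is set up cleanly, verifying the three bimorphism axioms and the factorization is routine; I would keep the argument at the level of "the generators of $R$ encode exactly the three defining equations of a $\QQ$-bimorphism, hence quotienting by them is precisely forcing the universal such map."
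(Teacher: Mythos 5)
Your overall skeleton is the right one, and it is essentially the paper's: show the canonical map into the quotient is a $\QQ$-bimorphism, extend a given bimorphism $f$ to a join-preserving map on the ambient object, observe that its kernel is a sup-lattice congruence containing $R$ and hence $\theta_R$, factor through the quotient, and deduce uniqueness because the image of $\tau$ generates (indeed you make explicit an extension step the paper's proof elides). The genuine problem is exactly the point you dismissed as ``bookkeeping around the coproduct.'' You take the ambient object to be $M_1\amalg M_2$, the coproduct of the \emph{sup-lattices} $M_1$ and $M_2$; but in $\SL$ binary coproducts coincide with products, so this is $M_1\times M_2$ with componentwise joins, where $(x,y)=(x,\bot)\vee(\bot,y)$. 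Two things then fail. First, the quotient degenerates: the generators of $R$ with $X=\emptyset$ (resp.\ $Y=\emptyset$) give $(\bot,y)\,\theta_R\,(\bot,\bot)$ and $(x,\bot)\,\theta_R\,(\bot,\bot)$, and since $\theta_R$ is a sup-lattice congruence, compatibility with joins forces $(x,y)\,\theta_R\,(\bot,\bot)$ for every pair, so $\MM_1\times\MM_2/\theta_R$ is the one-element lattice --- certainly not the tensor product (e.g.\ $\wp(S)\tensor\wp(T)\cong\wp(S\times T)$). Second, and relatedly, your extension step is not available: the universal property of the coproduct assembles \emph{one} sup-lattice homomorphism out of each cofactor, whereas a bimorphism is a map on pairs, i.e.\ a whole family of join-preserving maps $f(-,y)$ and $f(x,-)$ indexed by $M_2$ and $M_1$; and no nontrivial bimorphism extends to a join-preserving map on $M_1\times M_2$ with componentwise joins, since such an extension would send $(x,y)$ to $f(x,\bot)\vee(\bot,y)$-values, namely to $f(x,\bigvee\emptyset)\vee f(\bigvee\emptyset,y)=\bot$.

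The ambient object must instead be the \emph{free} sup-lattice over the \emph{set} $M_1\times M_2$ --- e.g.\ $\la\wp(M_1\times M_2),\bigcup\ra$ with the singleton map, as recalled in Section \ref{ressec} --- in which the pairs $(x,y)$ are independent generators and $\bigvee_{x\in X}(x,y)$ is a formal join, not equal to $(\bigvee X,y)$ before quotienting. (This is how the paper's proof tacitly reads $R$ and ``the kernel of $f$'': $f$ is implicitly extended to formal joins; the phrase ``sup-lattice coproduct of $M_1$ and $M_2$'' in the statement is misleading and is what led you astray, but taking it literally does not just complicate the bookkeeping --- it falsifies the construction.) With this replacement your argument is correct and complete: freeness gives the unique join-preserving extension $\tilde f$ of any map defined on pairs, its kernel is then a genuine sup-lattice congruence, it contains $R$ precisely because $f$ is a bimorphism, hence it contains $\theta_R$, so $\tilde f$ factors as $k_f\circ q$; the canonical map $\tau$ is a bimorphism exactly because of the three families of generators of $R$, and uniqueness of $k_f$ follows since the images of the generators under $\tau$ join-generate the quotient.
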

\begin{proof}
Let $\LL$ be any sup-lattice and $f: \MM_1 \times \MM_2 \lto \LL$ be a $\QQ$-bimorphism. Then we can extend the map $f$ to a sup-lattice homomorphism $h_f: \wp(\MM_1 \times \MM_2) \lto \LL$; thus $h_f \circ \sigma = f$, where $\sigma: M_1 \times M_2 \lto \wp(M_1 \times M_2)$ is the singleton map. On the other hand, the fact that $f$ is a $\QQ$-bimorphism implies $f\left(\bigvee X, v\right) = \bigvee_{x \in X} f(x,v)$, $f\left(u,\bigvee Y\right) = \bigvee_{y \in Y} f(u,y)$, and $f(u \cdot_1 a, v) = f(u, a \cdot_2 v)$, for all $X \cup \{u\} \subseteq M_1$, $Y \cup \{v\} \subseteq M_2$, and $a \in Q$. Now, since $h_f$ is a sup-lattice homomorphism, we have $h_f\left(\left\{\left(\bigvee X,v\right)\right\}\right) = h_f\left(\bigcup_{x \in X}\{(x,v)\}\right)$ and $h_f\left(\left\{\left(u,\bigvee Y\right)\right\}\right) = h_f\left(\bigcup_{y \in Y}\{(u,y)\}\right)$. Moreover, we have
$$\begin{array}{rcl}
&&h_f(\{(u \cdot_1 a, v)\}) = (h_f \circ \sigma)(u \cdot_1 a, v) = f(u \cdot_1 a, v) \\
&=& f(u,a \cdot_2 v) = (h_f \circ \sigma)(u,a \cdot_2 v) = h_f(\{(u,a \cdot_2 v)\}). \\ 
\end{array}$$

Hence the kernel of $h_f$ contains $R$ and~--- once denoted by $\mathbf{T}$ the quotient sup-lattice $\wp(\MM_1 \times \MM_2)/\theta_{R}$ and by $\pi$ the canonical quotient morphism of $\wp(\MM_1 \times \MM_2)$ over  it~--- the map
$$k_f \ : \quad X/\theta_R \ \in \ \mathbf{T} \quad \lmapsto \quad h_f(X) \ \in \ \LL$$
is a well-defined sup-lattice homomorphism and $k_f \circ \pi \circ \sigma = h_f \circ \sigma = f$. So we extended the $\QQ$-bimorphism $f$ to a sup-lattice homomorphism $k_f$, and it is immediate to verify both that $\tau = \pi \circ \sigma$ is a bimorphism and that such a $k_f$ is necessarily unique.

The following commutative diagram should better illustrate the construction.
\begin{equation}\label{tensormqdiagram}
\xymatrix{
\MM_1 \times \MM_2 \ar[rr]^\sigma \ar[rddd]_f \ar[rd]_{\tau} && \wp(\MM_1 \times \MM_2) \ar[lddd]^{h_f} \ar[ld]^{\pi} \\
 & \mathbf{T}  \ar[dd]^{k_f} & \\
 &&\\
 & \LL & \\
}
\end{equation}

It is important to remark explicitly that $R$ and $\tau$ depend neither on the sup-lattice $\LL$ nor on the $\QQ$-bimorphism $f$. So we proved that $\tau$ is the universal bimorphism whose domain is $\MM_1 \times \MM_2$, and that $\mathbf{T}$ is its codomain, i.e., the tensor product $\MM_1 \tensor_\QQ \MM_2$ of the $\QQ$-modules $\MM_1$ and $\MM_2$.
\end{proof}

For all $x \in M_1$ and $y \in M_2$, we will denote by $x \tensor y$ the image of the pair $(x,y)$ under $\tau$, i.e., the congruence class $\{(x,y)\}/\theta_R$, and we will call it a \emph{$\QQ$-tensor} or, if there will not be danger of confusion, simply a \emph{tensor}. It is clear then that every element of $\MM_1 \tensor_\QQ \MM_2$ is a join of tensors, so
$$\MM_1 \tensor_\QQ \MM_2 = \left\{\bigvee_{i \in I} x_i \tensor y_i \ \Big\vert \ x_i \in M_1, y_i \in M_2\right\}.$$

Let now $\QQ$ and $\RR$ be two quantales, if $\MM_1$ is an $\RR$-$\QQ$-bimodule and $\MM_2$ is a left $\QQ$-module, then the tensor product $\MM_1 \tensor_\QQ \MM_2$ naturally inherits a structure of left $\RR$-module from the one defined on $\MM_1$:
\begin{equation*}
\star_l: \ \left(b, \bigvee_{i\in I} x_i \tensor y_i\right) \in \RR \times \left(\MM_1 \tensor \MM_2\right) \ \lmapsto \ \bigvee_{i \in I} (b \cdot_\RR x_i) \tensor y_i \in \MM_1 \tensor \MM_2.
\end{equation*}
Indeed it is trivial that $\star_l$ distributes over arbitrary joins in both coordinates; on the other hand, the external associative law comes straightforwardly from the fact that $\MM_1$ is a left $\RR$-module. Analogously, if $\MM_1$ is a right $\QQ$-module and $\MM_2$ is a $\QQ$-$\RR$-bimodule, then the tensor product $\MM_1 \tensor_\QQ \MM_2$ is a right $\RR$-module with the scalar multiplication defined, obviously, as
\begin{equation*}
\star_r: \ \left(\bigvee_{i \in I} x_i \tensor y_i, b\right) \in \left(\MM_1 \tensor \MM_2\right) \times \RR \ \lmapsto \ \bigvee_{i \in I} x_i \tensor (y_i \cdot_\RR b) \in \MM_1 \tensor \MM_2.
\end{equation*}

The following Lemmas \ref{diam}, \ref{isohomtensmq'}, and \ref{homqm} are not directly related to this work, but they are involved in the proof of Theorem \ref{adjfunct}. So we report them here and refer to the corresponding results of \cite{thesis} for the proofs of the first two, which can be skipped without compromising the comprehension of the subsequent material.

\begin{lemma}\label{diam}\cite[Theorem 4.7.4]{thesis}
Let $\QQ$ and $\RR$ be quantales. If $\MM_1$ is a $\QQ$-$\RR$-bimodule and $\MM_2$ is a left $\QQ$-module, then $\Hom_\QQ(\MM_1,\MM_2)$ is a left $\RR$-module with the external product $\bullet_l$ defined, for $b \in R$, $h \in \hom_\QQ(\MM_1,\MM_2)$ and $x \in M_1$, by
\begin{equation}\label{homleftq}
(b \bullet_l h)(x) = h(x \cdot_\RR b),
\end{equation}
$\cdot_\RR$ denoting the right external product of $\MM_1$.

Analogously, if $\MM_1$ is an $\RR$-$\QQ$-bimodule and $\MM_2$ is a right $\QQ$-module, then $\Hom_\QQ(\MM_1,\MM_2)$ is a right $\RR$-module with the external product $\bullet_r$ defined, for $b \in R$, $h \in \hom_\QQ(\MM_1,\MM_2)$ and $x \in M_1$, by
\begin{equation}\label{homrightq}
(h \bullet_r b)(x) = h(b \cdot_\RR x),
\end{equation}
$\cdot_\RR$ denoting the left external product of $\MM_1$.
\end{lemma}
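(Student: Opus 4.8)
The plan is to verify directly that the external product $\bullet_l$ defined by \eqref{homleftq} makes $\Hom_\QQ(\MM_1,\MM_2)$ into a left $\RR$-module, and that the result respects the sup-lattice structure of the hom-object. First I would recall that $\Hom_\QQ(\MM_1,\MM_2)$ is already known to be a sup-lattice: the join of a family $\{h_i\}_{i\in I}$ of $\QQ$-module homomorphisms is computed pointwise, $\left(\bigvee_i h_i\right)(x)=\bigvee_i h_i(x)$, and this pointwise join is again a residuated scalar-preserving map. So the only thing at issue is the external action $\bullet_l$ and its compatibility with that join.

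The verification splits into the module axioms (M1)--(M3) of Definition \ref{modules}. For (M3), $\left(1_\RR\bullet_l h\right)(x)=h(x\cdot_\RR 1_\RR)=h(x)$, using that $\MM_1$ is a right $\RR$-module. For (M1), I would compute for $b,c\in R$ that $\left(bc\bullet_l h\right)(x)=h\left(x\cdot_\RR(bc)\right)=h\left((x\cdot_\RR b)\cdot_\RR c\right)=\left(c\bullet_l h\right)(x\cdot_\RR b)=\left(b\bullet_l(c\bullet_l h)\right)(x)$, again invoking only the right-module associativity of $\MM_1$; note the order reversal is exactly what a \emph{left} action built from the \emph{right} scalars produces, so one must be careful here that the bracketing works out, which it does. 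For (M2), the distributivity over joins in $R$ and in $\Hom_\QQ(\MM_1,\MM_2)$ follows from evaluating at a point $x$: $\left((\bigvee_i b_i)\bullet_l h\right)(x)=h\left(x\cdot_\RR\bigvee_i b_i\right)=h\left(\bigvee_i(x\cdot_\RR b_i)\right)=\bigvee_i h(x\cdot_\RR b_i)=\bigvee_i\left(b_i\bullet_l h\right)(x)$, using that $\MM_1$'s right action distributes over joins in $\RR$ and that $h$ preserves joins; and symmetrically $\left(b\bullet_l\bigvee_i h_i\right)(x)=\left(\bigvee_i h_i\right)(x\cdot_\RR b)=\bigvee_i h_i(x\cdot_\RR b)=\bigvee_i\left(b\bullet_l h_i\right)(x)$. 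One also checks that $b\bullet_l h$ is itself a $\QQ$-module homomorphism $\MM_1\to\MM_2$: it preserves joins because $h$ does and $x\mapsto x\cdot_\RR b$ does, and it preserves the left $\QQ$-action because $\MM_1$ being a $\QQ$-$\RR$-bimodule gives $(a\cdot_\QQ x)\cdot_\RR b=a\cdot_\QQ(x\cdot_\RR b)$, whence $\left(b\bullet_l h\right)(a\cdot_\QQ x)=h\left((a\cdot_\QQ x)\cdot_\RR b\right)=h\left(a\cdot_\QQ(x\cdot_\RR b)\right)=a\cdot_\QQ h(x\cdot_\RR b)=a\cdot_\QQ\left(b\bullet_l h\right)(x)$.

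The second assertion, concerning an $\RR$-$\QQ$-bimodule $\MM_1$, a right $\QQ$-module $\MM_2$, and the right action $\bullet_r$ of \eqref{homrightq}, is entirely parallel: one repeats the three computations above with the roles of left and right interchanged, now using the left $\RR$-action on $\MM_1$ in place of the right one. I do not expect any genuine obstacle here; the one point demanding care is bookkeeping of the left/right conventions, in particular making sure that the contravariance inherent in precomposition (the action on $\Hom$ is built by acting on the source argument) flips handedness exactly once, so that a right action on $\MM_1$ yields a \emph{left} action on $\Hom$ and a left action on $\MM_1$ yields a \emph{right} action on $\Hom$ — which is precisely what the statement records. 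All other steps are routine diagram-free evaluations at an element $x\in M_1$.
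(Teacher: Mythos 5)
Your verification is correct, and it is the expected argument: the paper itself gives no proof of this lemma, deferring to \cite[Theorem 4.7.4]{thesis}, so your direct check of (M1)--(M3), of the fact that $b\bullet_l h$ is again a $\QQ$-module homomorphism (via the bimodule law $(a\cdot_\QQ x)\cdot_\RR b=a\cdot_\QQ(x\cdot_\RR b)$), and of the pointwise sup-lattice structure on $\Hom_\QQ(\MM_1,\MM_2)$ supplies exactly what is omitted. In particular you handle the one delicate point correctly: the computation $(b\bullet_l(c\bullet_l h))(x)=h((x\cdot_\RR b)\cdot_\RR c)=h(x\cdot_\RR(bc))$ shows that precomposition with the right $\RR$-action on $\MM_1$ does yield a genuine \emph{left} action on the hom-object, and the mirror-image argument for $\bullet_r$ is indeed routine.
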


Analogously we have
\begin{lemma}\label{isohomtensmq'}\cite[Theorem 4.7.6]{thesis}
Let $\QQ$ and $\RR$ be quantales and let $\MM_1$ be a $\RR$-$\QQ$-bimodule, $\MM_2$ a left $\QQ$-module and $\MM_3$ a left $\RR$-module. Then, if we consider the left $\RR$-module $\MM_1 \tensor_\QQ \MM_2$ and the left $\QQ$-module $\Hom_\RR(\MM_1,\MM_3)$, we have
$$\Hom_\RR(\MM_1 \tensor_\QQ \MM_2, \MM_3) \cong_{\SL} \Hom_\QQ(\MM_2,\Hom_\RR(\MM_1,\MM_3)),$$
where $\cong_{\SL}$ means that they are isomorphic as sup-lattices.
\end{lemma}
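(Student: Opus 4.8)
The plan is to establish the sup-lattice isomorphism
$\Hom_\RR(\MM_1 \tensor_\QQ \MM_2, \MM_3) \cong_{\SL} \Hom_\QQ(\MM_2, \Hom_\RR(\MM_1, \MM_3))$
by exhibiting an explicit bijection and checking that it and its inverse preserve arbitrary joins. The natural candidate is the currying map: given a sup-lattice homomorphism $F : \MM_1 \tensor_\QQ \MM_2 \lto \MM_3$, define $\Phi(F) : \MM_2 \lto \Hom_\RR(\MM_1, \MM_3)$ by $\Phi(F)(y)(x) = F(x \tensor y)$, for $x \in M_1$ and $y \in M_2$. One must check, in order: (a) for fixed $y$, the map $x \mapsto F(x \tensor y)$ is a morphism in $\hom_\RR(\MM_1, \MM_3)$; (b) $y \mapsto \Phi(F)(y)$ is itself a $\QQ$-module homomorphism $\MM_2 \lto \Hom_\RR(\MM_1,\MM_3)$ with respect to the left $\QQ$-module structure on $\Hom_\RR(\MM_1,\MM_3)$ given by Lemma \ref{diam} (using that $\MM_1$ is an $\RR$-$\QQ$-bimodule, so the relevant external product on $\Hom_\RR(\MM_1,\MM_3)$ is the one defined in (\ref{homrightq})).

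For (a): residuation of $x \mapsto F(x \tensor y)$ follows because $\tensor$ preserves joins in the first coordinate (it is a $\QQ$-bimorphism composed with the universal $\pi$) and $F$ is a sup-lattice homomorphism, so the composite preserves arbitrary joins, hence is residuated. Preservation of the $\RR$-scalar multiplication follows from the definition of $\star_l$: $F((b \cdot_\RR x) \tensor y) = F(b \star_l (x \tensor y)) = b \cdot_{\MM_3} F(x \tensor y)$, the last equality because $F$ is an $\RR$-module homomorphism on $\MM_1 \tensor_\QQ \MM_2$. For (b): $\Phi(F)$ preserves joins in $\MM_2$ because $\tensor$ does in its second coordinate and $F$ is join-preserving; and for $a \in Q$ one computes $\Phi(F)(a \cdot_2 y)(x) = F(x \tensor (a \cdot_2 y)) = F((x \cdot_1 a) \tensor y) = \Phi(F)(y)(x \cdot_1 a) = (a \bullet_l \Phi(F)(y))(x)$, using the defining bimorphism relation (\ref{bimproduct}) inside the tensor product and then the formula (\ref{homleftq}) for the $\QQ$-action on $\Hom_\RR(\MM_1,\MM_3)$. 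So $\Phi(F) \in \hom_\QQ(\MM_2, \Hom_\RR(\MM_1,\MM_3))$.

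For the inverse, given $G \in \hom_\QQ(\MM_2, \Hom_\RR(\MM_1,\MM_3))$, the map $(x,y) \mapsto G(y)(x)$ is a $\QQ$-bimorphism $\MM_1 \times \MM_2 \lto \MM_3$: it is join-preserving in $x$ since $G(y)$ is residuated, join-preserving in $y$ since $G$ is, and the identity $G(y)(x \cdot_1 a) = G(a \cdot_2 y)(x)$ is exactly the unravelling of $G$ being a $\QQ$-module homomorphism against the action of Lemma \ref{diam}. By the universal property of the tensor product (Theorem \ref{tensormqexists}) it factors uniquely through $\pi$ as a sup-lattice homomorphism $\Psi(G) : \MM_1 \tensor_\QQ \MM_2 \lto \MM_3$ with $\Psi(G)(x \tensor y) = G(y)(x)$; one checks $\Psi(G)$ is $\RR$-linear by evaluating on generators $x \tensor y$ and using $b \star_l (x \tensor y) = (b \cdot_\RR x) \tensor y$. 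That $\Phi$ and $\Psi$ are mutually inverse is immediate on the generating tensors and on evaluations, and since all elements of the tensor product are joins of tensors this suffices. Finally, both $\Hom$-sets carry the pointwise join order, and $\Phi, \Psi$ are evidently monotone and mutually inverse, hence order isomorphisms, i.e. sup-lattice isomorphisms.

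The main obstacle, and the only genuinely non-formal point, is bookkeeping the correct module structures: one must be careful that the $\QQ$-module structure on $\Hom_\RR(\MM_1, \MM_3)$ is the one from Lemma \ref{diam} built from the \emph{right} $\QQ$-action on $\MM_1$, and that this is precisely what makes the bimorphism relation $(\ref{bimproduct})$ correspond to $\QQ$-linearity of the curried map. Once the structures are fixed, every verification reduces to evaluating on a generator $x \tensor y$ (or on a pair $(x,y)$) and applying a single defining identity, so no nontrivial computation remains.
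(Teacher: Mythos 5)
Your proposal is correct and is the standard currying/tensor--hom adjunction argument, which is exactly the approach behind this lemma; the paper itself gives no proof but defers to \cite[Theorem 4.7.6]{thesis}, whose argument is the same. The only blemish is the parenthetical claim that the relevant $\QQ$-action on $\Hom_\RR(\MM_1,\MM_3)$ is the one in (\ref{homrightq}): after relabelling the quantales it is the one in (\ref{homleftq}), namely $(a\bullet_l h)(x)=h(x\cdot_1 a)$, which is in fact the formula you use correctly in the computation.
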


\begin{lemma}\label{homqm}
Let $\QQ$ be a quantale and $\MM$ be a $\QQ$-module. Then, considering $\QQ = \la Q, \bigvee \ra$ as a module over itself, we have
\begin{equation*}
\Hom_\QQ(\QQ,\MM) \cong_{\SL} \MM.
\end{equation*}
\end{lemma}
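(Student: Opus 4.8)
The plan is to exhibit an explicit pair of mutually inverse sup-lattice homomorphisms between $\Hom_\QQ(\QQ,\MM)$ and $\MM$. The natural candidate is the evaluation-at-$1$ map $\varepsilon : \Hom_\QQ(\QQ,\MM) \lto M$, $\varepsilon(h) = h(1)$, with proposed inverse $\eta : M \lto \Hom_\QQ(\QQ,\MM)$ sending $x \in M$ to the map $h_x : a \in Q \lmapsto a \cdot x \in M$. First I would check that $\eta$ is well-defined, i.e., that $h_x$ is genuinely a $\QQ$-module homomorphism: residuatedness of $h_x$ is exactly the statement that the scalar multiplication distributes over arbitrary joins in the first coordinate (condition (M2), second bullet), and the identity $h_x(ab) = (ab)\cdot x = a \cdot (b \cdot x) = a \cdot h_x(b)$ is condition (M1). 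So $\eta$ lands in $\hom_\QQ(\QQ,\MM)$.

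Next I would verify that $\varepsilon$ and $\eta$ are sup-lattice homomorphisms. For $\varepsilon$, recall that the join in $\Hom_\QQ(\QQ,\MM)$ is computed pointwise (as in any hom-object of sup-lattices), so $\varepsilon\bigl(\bigvee_i h_i\bigr) = \bigl(\bigvee_i h_i\bigr)(1) = \bigvee_i h_i(1) = \bigvee_i \varepsilon(h_i)$. For $\eta$, one needs $h_{\bigvee_i x_i} = \bigvee_i h_{x_i}$, which again reduces to the distributivity of the scalar product over joins in the module coordinate (condition (M2), first bullet), checked pointwise in $a$.

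Then I would show the two maps are mutually inverse. One composite is immediate: $\varepsilon(\eta(x)) = h_x(1) = 1 \cdot x = x$ by (M3). For the other, given $h \in \hom_\QQ(\QQ,\MM)$ I must show $\eta(\varepsilon(h)) = h$, i.e., $h_{h(1)} = h$; evaluating at $a \in Q$ gives $h_{h(1)}(a) = a \cdot h(1) = h(a \cdot 1) = h(a)$, using that $h$ preserves the scalar multiplication and that $a \cdot 1 = a$ in the free cyclic module $\QQ$ over itself. This completes the argument.

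There is no serious obstacle here; the statement is essentially a Yoneda-type / "free module on one generator" fact, and everything is a direct unwinding of the module axioms. The only point requiring a moment's care is the bookkeeping about which of the distributivity conditions in Definition \ref{modules} is being used where, and the observation that joins in $\Hom_\QQ(\QQ,\MM)$ are pointwise (so that $\varepsilon$ and $\eta$ are automatically $\SL$-morphisms once well-definedness is settled). If one wanted the isomorphism to be natural in $\MM$, that too is routine: for a $\QQ$-module homomorphism $g : \MM \lto \NN$ one checks $\varepsilon_\NN \circ (g \circ -) = g \circ \varepsilon_\MM$ directly on $h \in \hom_\QQ(\QQ,\MM)$, but the statement as given only asks for an $\SL$-isomorphism, so I would keep the proof short.
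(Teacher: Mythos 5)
Your proof is correct and follows essentially the same route as the paper: the paper also uses the evaluation-at-$1$ map and the map $x \mapsto (a \mapsto a \cdot x)$, and checks that they are mutually inverse sup-lattice homomorphisms. The only difference is that you spell out the well-definedness and pointwise-join details that the paper dismisses as immediate.
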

\begin{proof}
First of all we observe that, for any fixed $x \in M$, the map $f_x: a \in Q \lto a \cdot x \in M$ is trivially a $\QQ$-module homomorphism. Then we can consider the map $\alpha: x \in M \lto f_x \in \hom_\QQ(\QQ,\MM)$, which is clearly a sup-lattice homomorphism.

Let us consider also the map $\beta: f \in \hom_\QQ(\QQ,\MM) \lto f(1) \in M$. Again, it is immediate to verify that $\beta$ is a sup-lattice homomorphism. But we also have:
$$((\alpha \circ \beta)(f))(a) = (\alpha(f(1)))(a) = f_{f(1)}(a) = a \cdot f(1) = f(a),$$
for all $f \in \hom_\QQ(\QQ,\MM)$ and $a \in Q$, and
$$(\beta \circ \alpha)(x) = \beta(f_x) = f_x(1) = 1 \cdot x = x,$$
for all $x \in M$.

Thus $\alpha \circ \beta = \id_{\hom_\QQ(\QQ,\MM)}$ and $\beta \circ \alpha = \id_M$, i.e. $\alpha$ is a sup-lattice isomorphism whose inverse is $\beta$, and the thesis follows.
\end{proof}

As a consequence of the previous result, the $\QQ$-module structure defined on $\hom_\QQ(\QQ,\MM)$ by Lemma~\ref{diam} is isomorphic to $\MM$.

Let now $\QQ \leq \RR$ be quantales. If $\MM$ is a left $\QQ$-module, we can use the tensor product in order to extend the $\QQ$-module $\MM$ to an $\RR$-module. Indeed, if we consider $\RR$ as an $\RR$-$\QQ$-bimodule, the $\QQ$-tensor product $\RR \tensor_\QQ \MM$ is a left $\RR$-module (hence, also a left $\QQ$-module).

Let $x \in M$; for any $a \in Q$,
$$a \star_l (1 \tensor x) = a \tensor x = (1 \cdot a) \tensor x = 1 \tensor (a \cdot_M x).$$
So the set $1 \tensor_\QQ \MM = \{1 \tensor x \mid x \in M\}$~--- that clearly generates $\RR \tensor_\QQ \MM$ as $\RR$-module~--- is a $\QQ$-submodule of $\RR \tensor_\QQ \MM$, and a homomorphic image of $\MM$. The map
\begin{equation*}
\begin{array}{llll}
1 \tensor \iota_\MM : \quad & \MM \quad & \lto \quad 	& \RR \tensor_\QQ \MM \\
										 				&	x  				& \lmapsto		& 1 \tensor x \\
\end{array}
\end{equation*}
is a $\QQ$-module homomorphism.

If $\MM = \QQ^X$ is a free module, the tensor product is isomorphic to the free $\RR$-module over the same generating set: $\RR \tensor_\QQ \QQ^X \cong \RR^X$. Indeed the map $\phi: (b,f) \in \RR \times \QQ^X \lmapsto (b \cdot f(x))_{x \in X} \in \RR^X$ is clearly a $\QQ$-bimorphism, and the homomorphism that extends $\phi$ to $\RR \tensor_\QQ \QQ^X$ is
$$k_\phi: \bigvee_{i \in I} b_i \tensor f_i \in \RR \tensor_\QQ \QQ^X \lmapsto \bigvee_{i \in I} b_i \cdot f_i \in \RR^X.$$
Then, setting $\chi_x(y) = \left\{\begin{array}{ll} \bot & \textrm{if } y \neq x \\ 1 & \textrm{if } y = x \end{array}\right.$ for all $x \in X$, it is easy to verify that $k': g \in \RR^X \lmapsto \bigvee_{x \in X} g(x) \tensor \chi_x \in \RR \tensor_\QQ \QQ^X$ is a homomorphism and it is the inverse of $k_\phi$. Every element of $\RR \tensor_\QQ \QQ^X$ can be written in a unique way as $\bigvee_{x \in X} b_x (1 \tensor \chi_x)$, i.e. $\RR \tensor_\QQ \QQ^X$ is the free $\RR$-module generated by the set $\{1 \tensor \chi_x \mid x \in X\}$, equipotent to $X$.

In general, if $\MM$ is a left $\QQ$-module, $X$ is a set of generators for $\MM$, and $\RR$ is a quantale containing $\QQ$, then the left $\RR$-module $\RR \tensor_\QQ \MM$ is generated by $1 \tensor_\QQ X = \{1 \tensor x \mid x \in X\}$.

\begin{theorem}\label{adjfunct}
The functor $\sub_h$ defined in (\ref{subh}) has both a left adjoint $\subl_h$ and a right adjoint $\subr_h$.
\end{theorem}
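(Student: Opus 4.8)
The plan is to exhibit the two adjoints explicitly by the familiar ``extension of scalars'' and ``co-induction'' constructions, and then verify the adjunction via the hom-isomorphisms already available in Section \ref{tens}. Concretely, for the left adjoint I would set $\subl_h(\MM) = \RR \tensor_\QQ \MM$, where $\RR$ is regarded as an $\RR$-$\QQ$-bimodule via $h$ (Lemma \ref{indmod}); this is a left $\RR$-module by the discussion following Theorem \ref{tensormqexists}. On morphisms, a $\QQ$-homomorphism $g : \MM \lto \NN$ is sent to the $\RR$-homomorphism $\id_\RR \tensor g : \RR \tensor_\QQ \MM \lto \RR \tensor_\QQ \NN$ determined by $b \tensor x \lmapsto b \tensor g(x)$, which is well defined by the universal property of the tensor product. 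For the right adjoint I would set $\subr_h(\MM) = \Hom_\QQ(\RR, \MM)$, where now $\RR$ is viewed as a $\QQ$-$\RR$-bimodule via $h$ (again Lemma \ref{indmod}); by Lemma \ref{diam} this hom-set carries a natural left $\RR$-module structure via $(b \bullet_l f)(x) = f(xb)$. On morphisms it acts by post-composition.

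The key verifications are the two natural isomorphisms. For the left adjoint I want
$$\Hom_\RR(\RR \tensor_\QQ \MM, \NN) \cong \Hom_\QQ(\MM, \sub_h(\NN))$$
for every left $\QQ$-module $\MM$ and every left $\RR$-module $\NN$; but this is exactly the instance of Lemma \ref{isohomtensmq'} with $\MM_1 = \RR$ (as an $\RR$-$\QQ$-bimodule), $\MM_2 = \MM$, and $\MM_3 = \NN$, once one observes that $\Hom_\RR(\RR, \NN) \cong \NN$ as a left $\QQ$-module, which is the $\QQ$-module refinement of Lemma \ref{homqm} together with the remark following it identifying the induced $\QQ$-action on $\Hom_\RR(\RR,\NN)$ with the restricted action on $\NN$ — i.e.\ precisely $\sub_h(\NN)$. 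So the adjunction $\subl_h \dashv \sub_h$ reduces to chaining these two isomorphisms. For the right adjoint I want
$$\Hom_\RR(\NN, \Hom_\QQ(\RR, \MM)) \cong \Hom_\QQ(\sub_h(\NN), \MM)$$
for every left $\RR$-module $\NN$ and every left $\QQ$-module $\MM$; here I would use the hom-tensor adjunction of Lemma \ref{isohomtensmq'} again, this time after rewriting the left-hand side using the tensor-hom duality with $\RR$ as a $\QQ$-$\RR$-bimodule, or alternatively give the direct currying bijection $\Phi \mapsto \big(n \mapsto (b \mapsto \Phi(bn))\big)$ and check by hand that it is a well-defined sup-lattice isomorphism commuting with the relevant scalar actions, with inverse $\Psi \mapsto \big(n \mapsto \Psi(n)(1)\big)$.

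After establishing the bijections pointwise, the remaining task is routine naturality: checking that the isomorphisms are natural in both variables, i.e.\ that the obvious square diagrams with $\sub_h(\varphi)$ and $\subl_h(\varphi)$ (resp.\ $\subr_h(\varphi)$) commute. This follows by unwinding the definitions on generating tensors $b \tensor x$ (resp.\ on the evaluation-at-$1$ description of elements of $\Hom_\QQ(\RR,\MM)$), and it is the kind of computation the author has been performing throughout Section \ref{tens}. I expect the main obstacle to be purely bookkeeping: keeping straight which of the three bimodule structures on $\RR$ induced by $h$ (the $\QQ$-bimodule, the $\RR$-$\QQ$-bimodule, and the $\QQ$-$\RR$-bimodule of Lemma \ref{indmod}) is in play at each step, and correspondingly which external product ($\star_l$, $\bullet_l$, or the restricted action $\cdot_h$) governs the module structure on each hom-set or tensor product, so that the isomorphisms of Lemmas \ref{isohomtensmq'} and \ref{homqm} apply verbatim rather than needing to be re-proved. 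Once the bimodule side is organized correctly, no genuinely new idea beyond the tensor-hom machinery already in place is required.
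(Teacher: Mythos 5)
Your proposal is correct and follows essentially the same route as the paper: the left adjoint is $\RR \tensor_\QQ {}\<$ with $\RR$ as an $\RR$-$\QQ$-bimodule, the right adjoint is $\Hom_\QQ(\RR_h,{}\<)$ with the $\RR$-action of Lemma \ref{diam}, and both adjunctions are obtained by combining Lemma \ref{isohomtensmq'} with Lemma \ref{homqm} (respectively the identification $\RR \tensor_\RR \NN \cong \NN_h$), exactly as in the paper's proof. The only difference is that you make the naturality check and the action of the functors on morphisms explicit, which the paper leaves implicit.
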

\begin{proof}
For any $\MM \in \QQ\Mod$, viewing $\RR$ as an $\RR$-$\QQ$-bimodule, we can construct the tensor product $\RR \tensor_{\QQ} \MM$ which is a left $\RR$-module. We claim that the left adjoint of $\sub_h$ is
\begin{equation}\label{hl}
\begin{array}{cccc}
\subl_h: & \QQ\Mod & \lto & \RR\Mod \\
			& \MM			& \lmapsto & \RR \tensor_{\QQ} \MM
\end{array}.
\end{equation}
In order to prove that, we need to show that there exists a natural bijection between $\hom_\RR(\RR \tensor_{\QQ} \MM, \NN)$ and $\hom_\QQ(\MM,\NN_h)$, for all $\MM \in \QQ\Mod$ and $\NN \in \RR\Mod$. The hom-set $\hom_\RR(\RR \tensor_{\QQ} \MM, \NN)$ is isomorphic, as a sup lattice, to $\hom_\QQ(\MM, \hom_\RR(\RR,\NN))$, by Theorem~\ref{isohomtensmq'}; on the other hand, $\hom_\RR(\RR,\NN)$ and $\NN$ are isomorphic sup-lattices by Lemma~\ref{homqm}, and such an isomorphism is a $\QQ$-module isomorphism (with $\NN_h$ instead of $\NN$) for how the $\QQ$-module structure is induced on $\hom_\RR(\RR,\NN)$. Hence the two hom-sets are isomorphic sup-lattices, and $\subl_h$ is the left adjoint of $\sub_h$. 

The right adjoint is defined by
\begin{equation}\label{hr}
\begin{array}{cccc}
\subr_h: & \QQ\Mod & \lto & \RR\Mod \\
			 & \MM & \lmapsto & \Hom_\QQ(\RR_h,\MM),
\end{array}
\end{equation}
where the left $\RR$-module structure on $\Hom_\QQ(\RR_h,\MM)$ is the one introduced in Lemma~\ref{diam}. This part of the proof is analogous to the case of $\subl_h$. Indeed, for any $\QQ$-module $\MM$ and any $\RR$-module $\NN$, by Theorem~\ref{isohomtensmq'}, $\hom_\RR(\NN,\MM_h^r)$ --- namely $\hom_\RR(\NN, \Hom_\QQ(\RR_h,\MM))$ --- is isomorphic to $\hom_\QQ((\RR \tensor_\RR \NN)_h, \MM)$ in $\SL$; on the other hand, since every tensor $b \tensor y \in \RR \tensor_\RR \NN$ can be rewritten in the form $1 \tensor b \cdot_N y$, such a tensor product is easily seen to be isomorphic to $\NN_h$. Therefore $\hom_\RR(\NN,\MM_h^r)$ is a sup-lattice isomorphic to $\hom_\QQ(\NN_h, \MM)$ and the theorem is proved.
\end{proof}

\begin{theorem}\label{subs}
Let $h: \QQ \lto \RR$ be an onto quantale homomorphism. Then the functor $\sub_h$ is a full embedding and its left adjoint $\RR_h \tensor_\QQ {}\<$ is, up to a natural isomorphism, its left inverse, i.e., $\subl_h \circ \sub_h$ is naturally isomorphic to $\ID_{\RR\Mod}$.

Moreover, if $h$ is a retraction with correspoding section $k$, $\RR_h \tensor_\QQ {}\<$ and $\sub_k$ are naturally isomorphic.
\end{theorem}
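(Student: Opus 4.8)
The plan is to prove the two parts separately; the first part is itself two independent assertions---that $\sub_h$ is a full embedding and that $\subl_h\circ\sub_h\cong\ID_{\RR\Mod}$---both of which come down to exploiting the surjectivity of $h$.

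For the full embedding, note first that $\sub_h$ leaves underlying sup-lattice morphisms unchanged: an $\RR$-linear $f\colon\NN\lto\NN'$ satisfies $f(a\cdot_h x)=f(h(a)\cdot x)=h(a)\cdot f(x)=a\cdot_h f(x)$, so it is also a $\QQ$-module homomorphism $\NN_h\lto\NN'_h$, which gives functoriality and faithfulness at once. For fullness, take a $\QQ$-module homomorphism $g\colon\NN_h\lto\NN'_h$; given $b\in R$ choose $a$ with $h(a)=b$ and compute $g(b\cdot x)=g(a\cdot_h x)=a\cdot_h g(x)=b\cdot g(x)$, so $g$ is $\RR$-linear. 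The same observation---the $\RR$-action on $\NN$ is recovered from the $\QQ$-action on $\NN_h$ by $b\cdot x=h(a)\cdot x$ for any preimage $a$ of $b$---shows $\sub_h$ is injective on objects.

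Next I would establish $\subl_h\circ\sub_h\cong\ID_{\RR\Mod}$ by writing the isomorphism down explicitly, rather than tracing the adjunction of Theorem~\ref{adjfunct} through Lemmas~\ref{isohomtensmq'} and \ref{homqm}. For an $\RR$-module $\NN$, the map $\RR\times\NN_h\lto\NN$, $(b,x)\mapsto b\cdot x$, is a $\QQ$-bimorphism---the only nontrivial condition, $f(b\cdot_1 a,x)=f(b,a\cdot_2 x)$, is simply $(b\,h(a))\cdot x=b\cdot(h(a)\cdot x)$---so it factors through an $\RR$-module homomorphism $\mu_\NN\colon\RR\tensor_\QQ\NN_h\lto\NN$ with $\mu_\NN(b\tensor x)=b\cdot x$. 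Its inverse is $\nu_\NN\colon x\mapsto 1\tensor x$: the key point is that surjectivity of $h$ collapses $b\tensor x=h(a)\tensor x=1\tensor(h(a)\cdot x)=1\tensor(b\cdot x)$ whenever $h(a)=b$, which makes $\nu_\NN$ $\RR$-linear and gives $\nu_\NN\circ\mu_\NN=\id$, while $\mu_\NN\circ\nu_\NN=\id$ is immediate. Naturality in $\NN$ is a one-line check, since on a morphism $f$ the composite $\subl_h\circ\sub_h$ acts as $b\tensor x\mapsto b\tensor f(x)$; the assertion $\subl_h\circ\sub_h\cong\ID_{\RR\Mod}$ is just this natural isomorphism.

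For the second part, with $k$ a section of $h$, the goal is $\subl_h\cong\sub_k$ as functors $\QQ\Mod\lto\RR\Mod$. I would argue by uniqueness of adjoints: since $\subl_h\dashv\sub_h$ is already known, it suffices to show $\sub_k$ is \emph{also} left adjoint to $\sub_h$, that is, to build a natural bijection $\hom_\RR(\sub_k\MM,\NN)\cong\hom_\QQ(\MM,\sub_h\NN)$. One inclusion is free: if $g$ is $\QQ$-linear for the $h$-action then $g(k(b)\cdot x)=h(k(b))\cdot g(x)=b\cdot g(x)$ because $hk=\id_\RR$, so $g$ is $\RR$-linear for the $k$-action. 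Equivalently, one checks that the $\RR$-linear map $\sub_k\MM=\MM_k\lto\RR_h\tensor_\QQ\MM$, $x\mapsto 1\tensor x$---well defined since $1\tensor(k(b)\cdot x)=h(k(b))\tensor x=b\tensor x$---is an isomorphism, its surjectivity again following from $h$ being onto. \textbf{I expect the opposite direction---that every $\RR$-morphism $\MM_k\lto\NN$ is automatically $h$-linear on $\MM$, equivalently that the comparison map is injective---to be the main obstacle}: it cannot invoke $kh=\id$, which is false in general, and has to be extracted from the surjectivity of $h$ together with the presence of the section $k$. Once that is in place, naturality and the closing clause follow formally, as in the first part.
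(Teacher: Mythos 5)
Your treatment of the first part is correct and is essentially the paper's own argument: faithfulness is automatic, injectivity on objects and fullness both come from recovering the $\RR$-action through preimages under $h$, and $\subl_h \circ \sub_h \cong \ID_{\RR\Mod}$ is witnessed by $x \lmapsto 1 \tensor x$. Your only variation is that you build the inverse $\mu_\NN(b \tensor x) = b \cdot x$ explicitly from the universal property of the tensor product instead of simply asserting, as the paper does, that $1 \tensor {}\<$ is invertible; that is a harmless (indeed clarifying) difference.

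The second part is where the proposal is genuinely incomplete, and the obstacle you flag is not a technical nuisance to be overcome but an actual impossibility: under the stated hypotheses $\sub_k$ is in general \emph{not} left adjoint to $\sub_h$, so the uniqueness-of-adjoints route cannot be closed. Concretely, take $\QQ = \wp(\AA)$ for a nontrivial group $\AA$, let $\RR = \wp(\{1\})$ be the two-element quantale, let $h$ be induced by the unique monoid homomorphism $\AA \lto \{1\}$ and $k$ by the inclusion $\{1\} \lto \AA$; then $h$ is onto and $h \circ k = \id_\RR$. For $\MM = \QQ$ one computes $\RR_h \tensor_\QQ \QQ \cong \RR_h$ (via $b \tensor a \lmapsto b\,h(a)$, with inverse $b \lmapsto b \tensor 1$), a two-element module, whereas $\QQ_k$ has underlying sup-lattice $\wp(A)$, with at least four elements. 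So your comparison map $x \lmapsto 1 \tensor x \colon \MM_k \lto \RR_h \tensor_\QQ \MM$, although $\RR$-linear and surjective as you observe, is badly non-injective here, and since left adjoints are unique up to natural isomorphism, the adjunction $\sub_k \dashv \sub_h$ you would need is false.

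For comparison, the paper proves the ``moreover'' clause by a different and much shorter route: restriction along a composite is the composite of the restrictions, so $\sub_k \circ \sub_h = \sub_{h \circ k} = \ID_{\RR\Mod}$, and it then concludes that $\sub_k$, being a left inverse of $\sub_h$, is naturally isomorphic to $\subl_h$. Note, however, that this inference only identifies the two functors after composition with $\sub_h$, i.e., on modules of the form $\NN_h$; since $\sub_h$ is not essentially surjective, it does not yield a natural isomorphism on all of $\QQ\Mod$, and the example above shows that none exists. What is actually available --- to you and to the paper --- is $\sub_k \circ \sub_h \cong \ID_{\RR\Mod} \cong \subl_h \circ \sub_h$, that is, agreement of $\subl_h$ and $\sub_k$ on restricted modules. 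Your inability to finish the unrestricted statement was therefore not a missing trick on your side.
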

\begin{proof}
We already observed that the functor $\sub_h$ is faithful for any quantale morphism $h$. If $h$ is onto, $\sub_h$ is obviously injective on objects. Indeed, on the one hand, it does not affect the underlying sup-lattice structure; on the other hand, if $\MM$ and $\MM'$ are two different $\RR$-module structures with the same underlying sup-lattice $M$, then there exist $b \in R$ and $x \in M$ such that $b \cdot x \neq b \cdot' x$. Hence, for any $a \in h^{-1}(b) \neq \varnothing$, $a \cdot_h x \neq a \cdot_h' x$ and therefore $\MM_h \not\cong_{\QQ\Mod} \MM_h'$. So $\sub_h$ is a categorical embedding.

Moreover, for any $\QQ$-module homomorphism $f: \MM_h \lto \NN_h$, the same underlying function is also an $\RR$-module homomorphism between $\MM$ and $\NN$, hence $f$ is the image under $\sub_h$ of an $\RR$-module morphism, and this means that $\sub_h$ is full.

Now let $\NN \in \RR\Mod$ and consider the $\QQ$-module morphism $1 \tensor {}\<: y \in \NN_h \lmapsto 1 \tensor y \in \RR_h \tensor_\QQ \NN_h$. It is easy to see that under the given hypotheses $1 \tensor {}\<$ is a $\QQ$-module isomorphism, hence $\subl_h \circ \sub_h$ is naturally isomorphic to the identity functor $\ID_{\RR\Mod}$. In particular, if $\RR$ is a retract of $\QQ$ under $h$ and $k$, $\sub_{h \circ k} = \sub_k \circ \sub_h = \ID_{\RR\Mod}$. So $\sub_k$ is the left inverse of $\sub_h$ and therefore it is naturally isomorphic to $\RR_h \tensor_\QQ {}\<$.
\end{proof}

Theorem \ref{transchar} provides a characterization of homomorphisms between substitution monoids which are induced by a language translation. If we look at the quantales of type $\wp(\Sfm)$, a characterization of quantale homomorphisms induced by language translations immediately follows from Theorem \ref{transchar}. Indeed we just need to observe the following two facts:
\begin{itemize}
\item any quantale of substitutions contains a subquantale isomorphic to $\wp(\VV)$, and
\item for any monoid $\AA$, the completely join-prime elements of the quantale $\wp(\AA)$ are precisely the singletons of elements of $A$.
\end{itemize}

\begin{corollary}\label{transchar2}
Let $h: \wp(\Sfm) \lto \wp(\Sfmm)$ be a quantale homomorphism. Then $h$ is induced by a language translation of $\lang$ in $\lang'$ if and only if it satisfies the following conditions:
\begin{enumerate}[(i)]
\item $h$ preserves the property of being completely join-prime;
\item if $\Sigma \in \wp(\sfmm)$ is completely join-prime and multipicatively idempotent, then $h^{-1}(\Sigma)$ is either empty or is comprised of completely join-prime idempotent elements of $\wp(\Sfm)$;
\item $h^{-1}(\Sigma) = \{\Sigma\}$ for all $\Sigma \in \wp(\VV)$.
\end{enumerate}
\end{corollary}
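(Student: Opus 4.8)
The plan is to derive this corollary directly from Theorem~\ref{transchar} by transporting it along the standard dictionary between monoid homomorphisms and quantale homomorphisms of powerset type, using the two facts recorded immediately above. The first thing I would make precise is that dictionary. Given a monoid homomorphism $g\colon\Sfm\lto\Sfmm$, the assignment $h_g\colon X\mapsto g[X]$ is a quantale homomorphism $\wp(\Sfm)\lto\wp(\Sfmm)$, and it sends singletons to singletons, hence --- by the second bulleted fact --- it carries completely join-prime elements to completely join-prime elements, i.e.\ it satisfies condition~(i). Conversely, if $h\colon\wp(\Sfm)\lto\wp(\Sfmm)$ is a quantale homomorphism satisfying~(i), then for each $\sigma\in\sfm$ the element $h(\{\sigma\})$ is completely join-prime, hence a singleton $\{g(\sigma)\}$; unit preservation gives $g(\id)=\id$, multiplicativity of $h$ applied to singletons forces $g(\sigma\tau)=g(\sigma)g(\tau)$, so $g$ is a monoid homomorphism, and preservation of joins yields $h(X)=\bigcup_{\sigma\in X}h(\{\sigma\})=g[X]$, i.e.\ $h=h_g$. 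Thus $g\mapsto h_g$ and $h\mapsto g$ are mutually inverse bijections between monoid homomorphisms $\Sfm\lto\Sfmm$ and quantale homomorphisms $\wp(\Sfm)\lto\wp(\Sfmm)$ satisfying~(i).

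Next I would observe that this dictionary is compatible with ``being induced by a translation'': for a language translation $\tau$ the induced quantale homomorphism is precisely $h_{\ov\tau}$, where $\ov\tau\colon\Sfm\lto\Sfmm$ is the monoid homomorphism of Lemma~\ref{qinq'}. Consequently, once we know that $h$ satisfies~(i) and hence $h=h_g$, we have the chain of equivalences: $h$ is induced by a language translation $\iff$ $g$ is induced by a language translation $\iff$ (by Theorem~\ref{transchar}) $g$ satisfies conditions~(i) and~(ii) of that theorem. It therefore only remains to check that, under the dictionary above, conditions~(i) and~(ii) of Theorem~\ref{transchar} become conditions~(ii) and~(iii) of the corollary.

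For that last step I would unwind the singleton correspondence. An element $\rho\in\sfmm$ is idempotent if and only if the singleton $\{\rho\}$ is completely join-prime (automatic) and multiplicatively idempotent in $\wp(\Sfmm)$, since $\{\rho\}\{\rho\}=\{\rho\rho\}$; and the completely join-prime members of $h^{-1}(\{\rho\})$ are precisely the singletons $\{\sigma\}$ with $\sigma\in g^{-1}(\rho)$. Reading the preimage conditions on completely join-prime elements, Theorem~\ref{transchar}(i) transcribes verbatim into corollary~(ii). For corollary~(iii), the first bulleted fact supplies subquantales $\wp(\VV)\leq\wp(\Sfm)$ and $\wp(\VV)\leq\wp(\Sfmm)$ whose completely join-prime elements are exactly the singletons of elements of $V$; Theorem~\ref{transchar}(ii), namely $g^{-1}(\sigma)=\{\sigma\}$ for all $\sigma\in V$, gives $h^{-1}(\{\sigma\})=\{\{\sigma\}\}$, and since $h=h_g$ preserves arbitrary unions and $g$ is injective on $V$, this propagates to $h^{-1}(\Sigma)=\{\Sigma\}$ for every $\Sigma\in\wp(\VV)$, which is corollary~(iii).

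The main obstacle here is, frankly, that there is almost no mathematical obstacle: all the substance lives in Theorem~\ref{transchar} and in the two recorded facts, and the corollary is essentially bookkeeping. The two places that need genuine attention are (a) the verification that a quantale homomorphism satisfying~(i) really is of the form $h_g$ --- this is the short multiplicativity-on-singletons computation sketched above, together with noting $h_g$ always satisfies~(i); and (b) keeping straight the distinction between a preimage under $g$ (a subset of substitutions) and a preimage under $h$ (a subset of $\wp$ of substitutions), so that the conditions of the corollary, phrased at the level of $\wp(\Sfm)$ and $\wp(\Sfmm)$, are read on completely join-prime elements and thereby faithfully mirror the monoid-level conditions of Theorem~\ref{transchar}. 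I would also spend a line confirming the subquantale claim $\wp(\VV)\leq\wp(\Sfm)$, which is immediate from the fact that $\VV$ embeds as a submonoid of $\Sfm$ and $\wp(\,\cdot\,)$ is functorial from monoids to quantales.
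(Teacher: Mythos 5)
Your argument is correct and is exactly the route the paper takes: the paper gives no written proof beyond asserting that the corollary follows immediately from Theorem~\ref{transchar} together with the two bulleted facts, and what you spell out --- the bijection between monoid homomorphisms and join-prime-preserving quantale homomorphisms of powerset type, plus the transcription of the two conditions through singletons --- is precisely that intended dictionary. Your explicitly flagged reading of condition~(ii), namely testing the preimage $h^{-1}(\Sigma)$ only on its completely join-prime members, is in fact the reading required for the equivalence (and is more careful than the paper itself), since for a non-injective translation $\tau$ the fibre $h^{-1}(\{\rho\})$ over an idempotent also contains non-singleton subsets of $\ov\tau^{-1}(\rho)$, which are never completely join-prime.
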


In what follows, we shall call \emph{(quantale) translations} all the homomorphisms, between any pair of quantales, satisfying the three conditions of Corollary \ref{transchar2}. It is immediate to verify that the composition of two quantale translations is still a quantale translation.

\section{Interpretations between systems with different languages}
\label{interabs}

In the present section we apply the results of the previous one in order to characterize the various types of interpretations between propositional deductive systems over different languages.

\begin{theorem}\label{faiththm}
Let $\cat S = \la S, \vdash_\g\ra$ and $\cat T = \la T, \vdash_\d\ra$ be two propositional deductive systems on $\lang$ and $\lang'$ respectively. Then $\cat S$ is interpretable (respectively: representable) in $\cat T$ if and only if there exist a quantale translation $h: \wp(\Sfm) \lto \wp(\Sfmm)$ and a $\wp(\Sfm)$-module morphism (resp.: an injective $\wp(\Sfm)$-module morphism) $f: \wp(\mathbf{S})_\g \lto (\wp(\mathbf{T})_{\d})_{h}$.
\end{theorem}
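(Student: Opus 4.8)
The plan is to combine Theorem \ref{gt} (the same-language characterization) with the restriction-of-scalars machinery of Section \ref{tens}, using a translation $\tau$ of $\lang$ into $\lang'$ as the bridge. First I would treat the forward direction. Suppose $\cat S$ is interpretable in $\cat T$, so by Definition \ref{ints} there is a weak interpretation $\iota: S \lto \wp(T)$ that is $g$-action-invariant for some monoid homomorphism $g: \Sfm \lto \Sfmm$. The key observation is that $g$ must in fact come from a language translation: since $\iota$ is $g$-action-invariant and $\cat T$'s consequence relation is substitution-invariant over $\lang'$, one checks that $g$ satisfies the two conditions of Theorem \ref{transchar} — idempotents pull back to idempotents, and the submonoid $\VV$ of variable-renamings is fixed setwise with trivial fibres. (This is the point where the precise definition of interpretation ``via $h$'' and the structure of $\wp(S)$ as built from sequents over $\lang$ has to be used carefully; it is the main obstacle, see below.) Once we know $g = \ov\tau$ for a translation $\tau$, the induced quantale homomorphism $h = h_{\ov\tau}: \wp(\Sfm) \lto \wp(\Sfmm)$ is a quantale translation in the sense of Corollary \ref{transchar2}.

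Having fixed $h$, I would now mimic the construction in the $(\Longrightarrow)$ direction of the proof of Theorem \ref{gt}, but landing in the restricted module. Define $f: \wp(\mathbf S)_\g \lto (\wp(\mathbf T)_\d)_h$ by $f(\g(\Phi)) = \d(\iota[\Phi])$ for all $\Phi \in \wp(S)$. Well-definedness and the sup-lattice homomorphism property are verified exactly as in Theorem \ref{gt}. The only new point is scalar-multiplication preservation: for $\Sigma \in \wp(\sfm)$ we need $f(\Sigma \cdot_\g \g(\Phi)) = \Sigma \cdot_h f(\g(\Phi))$, where the right-hand scalar action is the restricted one, i.e. $\Sigma \cdot_h (-) = h(\Sigma) \cdot_\d (-)$. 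Using that $\iota$ is $\ov\tau$-action-invariant, $\iota[\Sigma \cdot \Phi] = \ov\tau(\Sigma) \cdot \iota[\Phi]$, and then $\d$-closure and the definition of $\cdot_\d$ give exactly $h(\Sigma) \cdot_\d \d(\iota[\Phi])$, as required. If $\iota$ is moreover a representation, injectivity of $f$ follows verbatim from the argument in Theorem \ref{gt} using the ``only if'' half of (\ref{eqint}).

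For the converse, suppose we are given a quantale translation $h: \wp(\Sfm) \lto \wp(\Sfmm)$ and a $\wp(\Sfm)$-module morphism $f: \wp(\mathbf S)_\g \lto (\wp(\mathbf T)_\d)_h$. By Corollary \ref{transchar2}, $h = h_{\ov\tau}$ for some language translation $\tau$, hence $h$ restricts to $\ov\tau$ on the completely join-prime elements (the singletons of substitutions). Now I would set up the lifting diagram exactly as in (\ref{concinterdiag1}): since $\wp(\mathbf S)$ is a projective $\wp(\Sfm)$-module by Proposition \ref{gt3}, the surjection $\d: \wp(\mathbf T) \twoheadrightarrow \wp(\mathbf T)_\d$ — viewed as a $\wp(\Sfm)$-module morphism via the restricted action on both source and target — admits a lift $g: \wp(\mathbf S) \lto (\wp(\mathbf T))_h$ with $\d \circ g = f \circ \g$. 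As $\wp(\mathbf S)$ is a free sup-lattice, $g$ is determined by its values on singletons, giving a map $\iota: u \in S \lmapsto g(\{u\}) \in \wp(T)$; since $g$ is a $\wp(\Sfm)$-module morphism into the restricted module, $\iota(\sigma \cdot u) = g(\{\sigma\} \cdot \{u\}) = h(\{\sigma\}) \cdot g(\{u\}) = \ov\tau(\sigma) \cdot \iota(u)$, so $\iota$ is $\ov\tau$-action-invariant. That $\Phi \vdash_\g u$ implies $\iota[\Phi] \vdash_\d \iota(u)$ follows, as in Theorem \ref{gt}, from $\g(\{u\}) \subseteq \g(\Phi) \Rightarrow f(\g(\{u\})) \subseteq f(\g(\Phi))$ together with $\d \circ g = f \circ \g$; and when $f$ is injective the implication reverses, giving a representation.

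The step I expect to be the main obstacle is showing, in the forward direction, that the monoid homomorphism $g$ witnessing $h$-action-invariance of $\iota$ can be taken to be induced by a genuine language translation — i.e. verifying conditions (i)–(ii) of Theorem \ref{transchar} for $g$. A priori Definition \ref{ints} only demands an abstract monoid homomorphism, so either one argues this directly from the sequent structure of $S$ and $T$ and the substitution-invariance of $\vdash_\d$, or — more likely the intended reading — the theorem is to be understood with ``interpretation'' meaning ``interpretation via a translation'', in which case $g = \ov\tau$ is given and this obstacle dissolves; I would state this reading explicitly at the start of the proof.
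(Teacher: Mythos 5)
Your proposal is correct and follows essentially the same route as the paper: the forward direction via the explicit assignment $f(\g(\Phi)) = \d(\iota[\Phi])$ with the scalar computation carried out in the restricted module $(\wp(\TT)_\d)_h$, and the converse via Corollary \ref{transchar2} together with projectivity of $\wp(\SS)$ (Proposition \ref{gt3}) to lift along the diagram and read off $\iota$ on singletons. The definitional worry you flag is resolved exactly by your ``intended reading'': the paper's proof takes Definition \ref{ints}, as specialized to propositional systems at the end of Section \ref{transsec}, to supply a language translation $\tau$ with $\iota$ being $\ov\tau$-action-invariant, so no appeal to Theorem \ref{transchar} is needed in the forward direction.
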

\begin{proof}
Assume that $\cat S$ is interpretable in $\cat T$. By Definition \ref{ints}, there exist a translation $\tau: \lang \lto \lang'$ (which induces a monoid homomorphism $\ov\tau: \Sfm \lto \Sfmm$) and a $\ov\tau$-action-invariant map $\iota$ such that (\ref{neqint}) holds. Let $h: \wp(\Sfm) \lto \wp(\Sfmm)$ be the quantale translation determined by $\ov\tau$ and let
$$f: \wp(S)_\gamma \lto \wp(T)_{\d}$$
be defined as follows
\begin{equation}\label{hf}
f(\g(\Phi)) = \d(\iota[\Phi]), \quad \textrm{for all $\Phi \in \wp(S)$}.
\end{equation}
The fact that $f$ is a well-defined sup-lattice homomorphism can be proved exactly as in Theorem \ref{gt}. In order to prove that it is also a $\wp(\Sfm)$-module homomorphism between $\wp(\mathbf{S})_\g$ and $(\wp(\mathbf{T})_{\d})_{h}$, let $\Sigma \in \wp(\sfm)$ and $\Phi \in \wp(S)$. We have
$$\begin{array}{l}
f(\Sigma \cdot_\g \g(\Phi)) = f(\g(\Sigma \cdot \Phi)) = \d(\iota[\Sigma \cdot \Phi]) \\
= \d(\ov\tau[\Sigma] \cdot \iota[\Phi]) = \d(h(\Sigma) \cdot \iota[\Phi])  \\
= h(\Sigma) \cdot_{\d} \d(\iota[\Phi]) = h(\Sigma) \cdot_{\d} f(\g(\Phi)) \\
= \Sigma \ (\cdot_\d)_h \ f(\g(\Phi)).
\end{array}$$
Again as in Theorem \ref{gt}, if $\iota$ is a representation it follows easily from (\ref{eqint}) that $f$ is injective.

Conversely, let us assume the existence of $h$ and $f$. By Corollary~\ref{transchar2} $h$ is the quantale homomorphism extending the monoid homomorphism $\ov\tau: \Sfm \lto \Sfmm$ induced by a language translation $\tau: \fml \lto \fmll$. On the other hand we have the following diagram of $\wp(\Sfm)$-module morphisms
\begin{equation}\label{concinterdiag}
\xymatrix{
\wp(\mathbf{S}) \ar@{-->}[rr]^{g} \ar@{->>} [dd]_{\g} & & \wp(\mathbf{T})_{h} \ar@{->>} [dd]^\d\\
&&\\
\wp(\mathbf{S})_\g \ar[rr]_{f} & & (\wp(\mathbf{T})_{\d})_h,
\\
}
\end{equation}
which can be completed with a morphism $g$ because $\wp(\mathbf{S})$ is a projective module. Moreover, since powersets are also free sup-lattices, $g$ is uniquely determined by its restriction $\iota$ to the singletons. Therefore we have a translation $\tau$ and a $\ov\tau$-action-invariant map $\iota$ satisfying (\ref{neqint}) (resp.: (\ref{eqint}) if $f$ is injective), and the assertion is proved.
\end{proof}

\begin{corollary}\label{equivthm}
Let $\cat S = \la S, \vdash_\g \ra$ and $\cat T = \la T, \vdash_\d\ra$ be two propositional deductive systems on $\lang$ and $\lang'$ respectively. Then $\cat S$ and $\cat T$ are equivalent if and only if there exist two quantale translations $h: \wp(\Sfm) \lto \wp(\Sfmm)$ and $k: \wp(\Sfmm) \lto \wp(\Sfm)$ such that $\wp(\mathbf{S})_\g \cong (\wp(\mathbf{T})_{\d})_h$ in $\wp(\Sfm)\Mod$ and $(\wp(\mathbf{S})_\g)_k \cong \wp(\mathbf{T})_{\d}$ in $\wp(\Sfmm)\Mod$.
\end{corollary}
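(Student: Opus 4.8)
The plan is to deduce this characterization of equivalence from Theorem \ref{faiththm} exactly as the equivalence part of Theorem \ref{gt} follows from its interpretation/representation parts, the only new ingredient being the bookkeeping of the two quantale translations and the two restriction-of-scalars functors. First I would unwind Definition \ref{ints}: $\cat S$ and $\cat T$ are equivalent precisely when there are monoid homomorphisms $\ov\tau\colon\Sfm\lto\Sfmm$ and $\ov\sigma\colon\Sfmm\lto\Sfm$, induced by language translations $\tau\colon\lang\lto\lang'$ and $\sigma\colon\lang'\lto\lang$ (Lemma \ref{qinq'}, Theorem \ref{transchar}), together with weak representations $\iota\colon S\lto\wp(T)$ and $\iota'\colon T\lto\wp(S)$ that are respectively $\ov\tau$- and $\ov\sigma$-action-invariant and satisfy (\ref{eqeq}) and (\ref{eqeq'}). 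Passing to the quantale level, let $h\colon\wp(\Sfm)\lto\wp(\Sfmm)$ and $k\colon\wp(\Sfmm)\lto\wp(\Sfm)$ be the quantale translations extending $\ov\tau$ and $\ov\sigma$ (Corollary \ref{transchar2}).

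For the forward direction, apply Theorem \ref{faiththm} to the representation $\iota$ (with translation $h$) to obtain an injective $\wp(\Sfm)$-module morphism $f\colon\wp(\mathbf S)_\g\lto(\wp(\mathbf T)_\d)_h$ defined by $f(\g(\Phi))=\d(\iota[\Phi])$, and symmetrically to the representation $\iota'$ (with translation $k$) to obtain an injective $\wp(\Sfmm)$-module morphism $f'\colon\wp(\mathbf T)_\d\lto(\wp(\mathbf S)_\g)_k$ given by $f'(\d(\Psi))=\g(\iota'[\Psi])$. It remains to check that $f$ and $f'$ are mutually inverse as sup-lattice maps, whence each is an isomorphism onto its (restricted-scalar) target; this is where the equivalence conditions (\ref{eqeq}), (\ref{eqeq'}) enter. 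Concretely, for $\Psi\in\wp(T)$ one computes $f(f'(\d(\Psi)))=f(\g(\iota'[\Psi]))=\d(\iota[\iota'[\Psi]])$, and (\ref{eqeq}) applied pointwise to $\Psi$ gives $\d(\iota[\iota'[\Psi]])=\d(\Psi)$; the reverse composite is handled identically using (\ref{eqeq'}). Since the underlying sets of $(\wp(\mathbf T)_\d)_h$ and $\wp(\mathbf T)_\d$ coincide (restriction of scalars leaves the sup-lattice untouched), $f$ is a $\wp(\Sfm)$-module isomorphism $\wp(\mathbf S)_\g\cong(\wp(\mathbf T)_\d)_h$, and likewise $f'$ witnesses $\wp(\mathbf T)_\d\cong(\wp(\mathbf S)_\g)_k$; rewriting the latter as $(\wp(\mathbf S)_\g)_k\cong\wp(\mathbf T)_\d$ gives the stated conclusion.

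For the converse, suppose the two quantale translations $h,k$ and the two module isomorphisms are given. By Corollary \ref{transchar2}, $h$ and $k$ come from language translations $\tau\colon\lang\lto\lang'$ and $\sigma\colon\lang'\lto\lang$. Feeding the isomorphism $\wp(\mathbf S)_\g\cong(\wp(\mathbf T)_\d)_h$ into the converse half of Theorem \ref{faiththm} — i.e. completing the projective cover $\wp(\mathbf S)\twoheadrightarrow\wp(\mathbf S)_\g$ against $\wp(\mathbf T)_h\twoheadrightarrow(\wp(\mathbf T)_\d)_h$ using projectivity of $\wp(\mathbf S)$ (Proposition \ref{gt3}) and freeness of powersets as sup-lattices — produces a $\ov\tau$-action-invariant weak representation $\iota\colon S\lto\wp(T)$; symmetrically the second isomorphism yields a $\ov\sigma$-action-invariant weak representation $\iota'\colon T\lto\wp(S)$. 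Finally one verifies (\ref{eqeq}): since $f$ is the map $\g(\Phi)\mapsto\d(\iota[\Phi])$ and its inverse is necessarily $\d(\Psi)\mapsto\g(\iota'[\Psi])$ (this is where one must check that the inverse module isomorphism is realized by $\iota'$, using the uniqueness of the lifted $g$ along the projective cover), the identity $f^{-1}\circ f=\id$ evaluated on singletons gives $v\dashv\vdash_T\iota[\iota'(v)]$ for all $v\in T$, i.e. (\ref{eqeq}). Thus $\iota,\iota'$ form an equivalence via $h$ and $k$ in the sense of Definition \ref{ints}.

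The main obstacle is the converse direction, specifically matching the two abstractly-given module isomorphisms so that their underlying maps are genuinely realized on singletons by a pair of set-maps $\iota,\iota'$ that are mutually ``inverse up to $\dashv\vdash$''. In the same-language case (Theorem \ref{gt}) this is dispatched in one line because $f$ and $f^{-1}$ live over the same quantale; here one must additionally track that the restriction-of-scalars along $h$ (resp. $k$) does not interfere — which it does not, precisely because $\sub_h$ is a full embedding that is the identity on underlying sup-lattices (Theorem \ref{subs}) — and that the lift $g$ of $f$ along the projective cover, when restricted to singletons, composes correctly with the analogous lift of $f^{-1}$. Once one observes that these lifts are unique given their values on the free generators and that $f\circ f^{-1}=\id$, $f^{-1}\circ f=\id$ transfer to the singleton level, the verification of (\ref{eqeq}) and (\ref{eqeq'}) is routine, and the corollary follows.
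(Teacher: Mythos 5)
Your forward direction is correct and is exactly the intended route (the paper itself dispatches the whole corollary in one line as ``an immediate consequence of Theorems \ref{gt} and \ref{faiththm}''): from an equivalence via $h$ and $k$ you build $f(\g(\Phi))=\d(\iota[\Phi])$ and $f'(\d(\Psi))=\g(\iota'[\Psi])$, and (\ref{eqeq}), (\ref{eqeq'}) make them mutually inverse, giving both isomorphisms.

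The converse, however, has a genuine gap. The hypothesis only provides \emph{some} $\wp(\Sfm)$-module isomorphism $f_1:\wp(\SS)_\g\lto(\wp(\TT)_\d)_h$ and \emph{some} $\wp(\Sfmm)$-module isomorphism $f_2:(\wp(\SS)_\g)_k\lto\wp(\TT)_\d$; nothing says these two maps are related. Lifting $f_1$ gives an $\ov\tau$-action-invariant representation $\iota$ with $\d(\iota[\Phi])=f_1(\g(\Phi))$, and lifting $f_2^{-1}$ gives a $\ov\sigma$-action-invariant representation $\iota'$ with $\g(\iota'[\Psi])=f_2^{-1}(\d(\Psi))$; then $\d(\iota[\iota'(v)])=f_1\bigl(f_2^{-1}(\d(\{v\}))\bigr)$, so (\ref{eqeq}) holds exactly when $f_1\circ f_2^{-1}=\id$, i.e.\ when the two given isomorphisms are mutual inverses as set maps. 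You assert that ``the inverse module isomorphism is realized by $\iota'$'' and appeal to ``uniqueness of the lifted $g$ along the projective cover'', but projectivity only gives existence of lifts, not uniqueness, and in any case no uniqueness statement can force two independently quantified isomorphisms to coincide. Note that you cannot simply use $f_1^{-1}$ in place of $f_2^{-1}$ either: $f_1^{-1}$ is $\wp(\Sfm)$-equivariant via $h$, but there is no reason it should be $\wp(\Sfmm)$-equivariant via $k$ unless extra hypotheses hold (this is precisely what Theorem \ref{retraction} and Theorem \ref{subs} supply in the surjective case treated in Theorem \ref{equivthm2}). So your argument proves the converse only under the additional assumption that the two isomorphisms form a matched (mutually inverse) pair — which is what the forward direction produces and what the paper actually uses later in diagram (\ref{equivdiag}) — and to close the gap you would either have to read the statement that way or show that the existence of the two separate isomorphisms always yields such a matched pair, which you have not done.
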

\begin{proof}
The assertion is an immediate consequence of Theorems \ref{gt} and \ref{faiththm}.
\end{proof}

The next result is an interesting application of Theorems \ref{retraction} and \ref{subs}.
\begin{theorem}\label{equivthm2}
Let $\cat S = \la S, \vdash_\g \ra$ and $\cat T = \la T, \vdash_\d\ra$ be two propositional deductive systems on $\lang$ and $\lang'$ respectively, and $h: \wp(\Sfm) \lto \wp(\Sfmm)$ be a surjective translation.

Then $\cat S$ and $\cat T$ are equivalent (via $h$) if and only if $\wp(\SS)_\g$ and $(\wp(\TT)_\d)_h$ are isomorphic $\wp(\Sfm)$-modules.
\end{theorem}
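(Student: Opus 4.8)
The plan is to reduce the statement to Corollary~\ref{equivthm}, which characterises the equivalence of $\cat S$ and $\cat T$ by the existence of two quantale translations $h$ and $k$ together with a $\wp(\Sfm)$-module isomorphism $\wp(\SS)_\g\cong(\wp(\TT)_\d)_h$ \emph{and} a $\wp(\Sfmm)$-module isomorphism $(\wp(\SS)_\g)_k\cong\wp(\TT)_\d$. The content of the present theorem is that when $h$ is surjective the second isomorphism comes for free, so that a single isomorphism already suffices. Surjectivity will enter through Theorem~\ref{retraction}, which turns $h$ into a retraction of quantales, and through the ``moreover'' part of Theorem~\ref{subs}, which identifies the left adjoint $\subl_h$ with restriction of scalars along a section of $h$.

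For the forward implication, suppose $\cat S$ and $\cat T$ are equivalent via $h$. Unwinding Definition~\ref{ints}, there are weak representations $\iota\colon S\lto\wp(T)$ and $\iota'\colon T\lto\wp(S)$ forming an equivalence, with $\iota$ action-invariant via the monoid homomorphism $\ov\tau$ underlying $h$. Feeding $\iota$ into the forward, representation part of Theorem~\ref{faiththm} yields an injective $\wp(\Sfm)$-module morphism $f\colon\wp(\SS)_\g\lto(\wp(\TT)_\d)_h$ with $f(\g(\Phi))=\d(\iota[\Phi])$; similarly one gets a well-defined map $f'\colon\wp(\TT)_\d\lto\wp(\SS)_\g$ with $f'(\d(\Psi))=\g(\iota'[\Psi])$, and the equivalence conditions (\ref{eqeq}) and (\ref{eqeq'}) say precisely that the underlying maps of $f$ and $f'$ are mutually inverse. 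Hence $f$ is a bijective, and therefore invertible, $\wp(\Sfm)$-module morphism, i.e. $\wp(\SS)_\g\cong(\wp(\TT)_\d)_h$, as required.

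For the converse, assume $\wp(\SS)_\g\cong(\wp(\TT)_\d)_h$ in $\wp(\Sfm)\Mod$. Being a surjective quantale translation, $h$ is induced by a language translation $\tau$, and $\tau$ is onto because $\ov\tau$ is (Lemma~\ref{qinq'}(ii)); by Theorem~\ref{retraction} there is a section $\tau'\colon\lang'\lto\lang$ of $\tau$, again a language translation, and applying the free-quantale functor~---~which, being a functor, preserves retractions~---~to the monoid retraction $\ov\tau$ shows that $h$ is a retraction of quantales whose section $k\bydef\wp(\ov{\tau'})$ is itself a quantale translation (Corollary~\ref{transchar2}). Theorem~\ref{subs} now gives that $\subl_h$ is naturally isomorphic to $\sub_k$ and that $\subl_h\circ\sub_h$ is naturally isomorphic to $\ID_{\wp(\Sfmm)\Mod}$; applying the functor $\sub_k$ to the assumed isomorphism and using $(\wp(\TT)_\d)_h=\sub_h(\wp(\TT)_\d)$ we obtain
$$(\wp(\SS)_\g)_k=\sub_k(\wp(\SS)_\g)\cong\sub_k(\sub_h(\wp(\TT)_\d))\cong\subl_h(\sub_h(\wp(\TT)_\d))\cong\wp(\TT)_\d$$
in $\wp(\Sfmm)\Mod$. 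Thus $h$ and $k$ realise both isomorphisms required by Corollary~\ref{equivthm}, so $\cat S$ and $\cat T$ are equivalent; tracing the construction shows the equivalence is via $h$.

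The step I expect to be the main obstacle is not any single computation but the passage, in the converse direction, from one module isomorphism back to a full equivalence: one must verify carefully that a surjective quantale translation is a genuine retraction in the category of quantales and that a section can be found \emph{within the class of quantale translations}, so that Theorem~\ref{subs} applies to this particular $k$. This requires descending to the language level through Corollary~\ref{transchar2} and Lemma~\ref{qinq'}, invoking Theorem~\ref{retraction} there, and lifting back, all the while keeping track of which quantale each module is considered over and of how the restriction-of-scalars functors compose.
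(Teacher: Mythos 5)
Your proposal is correct and follows essentially the same route as the paper: the forward direction comes from Theorem~\ref{faiththm} (as in Corollary~\ref{equivthm}), and the converse uses Theorem~\ref{retraction} with Corollary~\ref{transchar2} to obtain a section $k$ of $h$ that is itself a quantale translation, then Theorem~\ref{subs} to transport the given isomorphism to a $\wp(\Sfmm)$-module isomorphism $(\wp(\SS)_\g)_k \cong \wp(\TT)_\d$, concluding via Corollary~\ref{equivthm}. Your detour through $\subl_h \cong \sub_k$ and $\subl_h \circ \sub_h \cong \ID$ is just a restatement of the paper's use of $\sub_k \circ \sub_h = \ID_{\wp(\Sfmm)\Mod}$ (equivalently, the full-embedding argument), so the two arguments coincide in substance.
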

\begin{proof}
One implication follows trivially from Corollary \ref{equivthm}. Conversely, assume that $f: \wp(\SS)_\g \lto (\wp(\TT)_\d)_h$ is a $\wp(\Sfm)$-module isomorphism with inverse $f^{-1}$, and consider the following diagrams
\begin{equation}\label{equivdiag}
\xymatrix{
\wp(\mathbf{S}) \ar@{-->}[rr]^{g} \ar@{->>} [dd]_{\g} & & \wp(\mathbf{T})_{h} \ar@{->>} [dd]^\d &  \wp(\mathbf{S})_k \ar@{->>} [dd]_{\g} & & \wp(\mathbf{T}) \ar@{-->}[ll]_{g'} \ar@{->>} [dd]^\d\\
&&&&&\\
\wp(\mathbf{S})_\g \ar[rr]_{f} & & (\wp(\mathbf{T})_{\d})_h & (\wp(\mathbf{S})_\g)_k  & & \wp(\mathbf{T})_{\d} \ar[ll]^{f^{-1}}.
\\
}
\end{equation}
The existence of the morphism $g$ which makes the diagram (of $\wp(\Sfm)$-modules) on the left commutative is guaranteed by the previous results. On the other hand, by Theorem \ref{retraction} (applying also Corollary \ref{transchar2}), $h$ is a retraction and therefore there exists a translation $k: \wp(\Sfmm) \lto \wp(\Sfm)$ such that $h \circ k = \id_{\wp(\Sfmm)}$. Since, by Theorem \ref{subs}, $\sub_h$ is a full embedding of $\wp(\Sfmm)\Mod$ into $\wp(\Sfm)\Mod$, $f$ and $f^{-1}$ are also isomorphisms between the $\wp(\Sfmm)$-modules $(\wp(\SS)_\g)_k$ and $\wp(\TT)_\d$. So the diagram on the right hand side of (\ref{equivdiag}) is a diagram of morphisms in $\wp(\Sfmm)\Mod$ and the projectivity of the $\wp(\Sfmm)$-module $\wp(\TT)$ ensures the existence of a morphism $g'$ which makes it commutative. Then the result follows from Corollary \ref{equivthm}.
\end{proof}

We conclude this section with the following characterization of weak interpretations, weak representations and similarities.

\begin{theorem}\label{nonconsthm}
Let $\cat S = \la S, \vdash\ra$ and $\cat T = \la T, \vdash_\d\ra$ be two propositional deductive systems on $\lang$ and $\lang'$ respectively.

Then $\cat S$ is weakly interpretable in (respectively: weakly representable in, similar to) $\cat T$ if and only if there exists a sup-lattice morphism (resp.: injective morphism, isomorphism) $f: \wp(\SS)_\g \lto \wp(\TT)_\d$.
\end{theorem}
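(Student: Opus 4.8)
The plan is to transcribe the proofs of Theorems~\ref{gt} and~\ref{faiththm}, deleting every mention of substitution monoids, quantale-translations and action-invariance. The point is that a weak interpretation is required to satisfy no invariance condition, so the only structure that survives is the underlying sup-lattice structure, and the characterisation becomes a statement about morphisms in $\SL$; in particular the homomorphism $h$ that played a role in Theorem~\ref{faiththm} disappears entirely here.

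For the ``only if'' direction, given a weak interpretation $\iota : S \lto \wp(T)$ I would set
$$f : \wp(\SS)_\g \lto \wp(\TT)_\d, \qquad f(\g(\Phi)) = \d(\iota[\Phi]) \quad (\Phi \in \wp(S)),$$
and verify that $f$ is a well-defined sup-lattice homomorphism exactly as in Theorem~\ref{gt}: if $\g(\Phi) = \g(\Psi)$ then $\Phi \vdash \Xi$ for all $\Xi \subseteq \g(\Psi)$, hence $\iota[\Phi] \vdash_\d \iota[\Xi]$ by~(\ref{neqint}), whence $\d(\iota[\Psi]) \subseteq \d(\iota[\Phi])$ and, symmetrically, equality; preservation of arbitrary joins is the same computation, using $\iota[\bigcup_i \Phi_i] = \bigcup_i \iota[\Phi_i]$ together with the fact that $\g$ and $\d$ are nuclei. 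No module-homomorphism clause has to be checked now. If $\iota$ is a weak representation, the injectivity of $f$ follows from~(\ref{eqint}) just as in Theorem~\ref{gt}; and from a similarity one obtains an isomorphism by applying this to both maps of the pair and invoking~(\ref{eqeq}).

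For the ``if'' direction, given a sup-lattice morphism $f : \wp(\SS)_\g \lto \wp(\TT)_\d$ I would complete the diagram
$$
\xymatrix{
\wp(\mathbf{S}) \ar@{-->}[rr]^{g} \ar@{->>} [dd]_{\g} & & \wp(\mathbf{T}) \ar@{->>} [dd]^\d\\
&&\\
\wp(\mathbf{S})_\g \ar[rr]_{f} & & \wp(\mathbf{T})_\d
}
$$
in $\SL$: since $\wp(\SS)$ is a free, hence projective, object of $\SL$ and $\d : \wp(\TT) \lto \wp(\TT)_\d$ is surjective, the morphism $f \circ \g$ lifts to a sup-lattice homomorphism $g$ with $\d \circ g = f \circ \g$. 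As powersets are free sup-lattices, $g$ is determined by its restriction to the singletons, i.e.\ by the map $\iota : \phi \in S \lmapsto g(\{\phi\}) \in \wp(T)$, and $g(\Phi) = \iota[\Phi]$ for every $\Phi$. Then $\Phi \vdash \psi$ means $\g(\{\psi\}) \subseteq \g(\Phi)$, so $\d(\iota(\psi)) = f(\g(\{\psi\})) \subseteq f(\g(\Phi)) = \d(\iota[\Phi])$, i.e.\ $\iota[\Phi] \vdash_\d \iota(\psi)$; hence $\iota$ is a weak interpretation, and if $f$ is injective this implication reverses, making $\iota$ a weak representation. If $f$ is an isomorphism, repeating the construction with $f^{-1}$ in place of $f$ yields a weak representation $\iota' : T \lto \wp(S)$ with associated lift $g'$ satisfying $\g \circ g' = f^{-1} \circ \d$, and then, for every $v \in T$,
$$\d(\iota[\iota'(v)]) = \d\bigl(g(g'(\{v\}))\bigr) = f\bigl(\g(g'(\{v\}))\bigr) = f\bigl(f^{-1}(\d(\{v\}))\bigr) = \d(\{v\}),$$
which is precisely~(\ref{eqeq}); so $\iota$ and $\iota'$ form a similarity.

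The single genuinely new observation — and hence the only real difficulty — is to realise that removing action-invariance collapses the whole situation to the category $\SL$, where $\wp(\SS)$ is projective for the trivial reason that it is free, so that none of the apparatus of Section~\ref{tens} (restriction/extension of scalars, Corollary~\ref{transchar2}) is needed. I expect the bookkeeping in the similarity clause (the last display) to be the most delicate step, but it is a routine diagram chase once the lifts $g$ and $g'$ are in hand; everything else is a direct transcription of Theorems~\ref{gt} and~\ref{faiththm}.
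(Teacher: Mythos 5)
Your proposal is correct and is essentially the paper's own argument: the paper proves Theorem~\ref{nonconsthm} by noting that the proofs of Theorems~\ref{gt} and~\ref{faiththm} carry over verbatim once action-invariance is dropped, the only point worth stressing being exactly the one you isolate, namely that the lifting $g$ in the diagram exists because powersets are free (hence projective) sup-lattices. Your explicit verification of the similarity clause via the two lifts $g$ and $g'$ is a correct unwinding of what the paper leaves implicit.
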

\begin{proof}
The proof is basically incorporated in the ones of the previous results. The only difference that is worth mentioning is the fact that the existence of the sup-lattice morphism which completes a diagram like the one in (\ref{concinterdiag}) is guaranteed by the fact that powersets are free (and therefore projective) sup-lattices.
\end{proof}

\section{Concluding remarks}
\label{concl}

The results of Sections \ref{inter} and \ref{interabs} show that sup-lattices and quantale modules provide a good framework for an abstract approach to the comparison of propositional deductive systems. The notations used throughout the paper come from the theory of quantales and their modules, and therefore in some cases they may look less suggestive for the working logician. For this reason, it is worthwhile to remark once again that the sup-lattice $\wp(\SS)_\g$ is nothing else than the lattice of theories $\Th(\vdash_\g)$ of the consequence relation $\vdash_\g$.

On the side of Universal Algebra, $\Fml$ is the term algebra over $\omega$ generators in the signature $\lang$, and it is known that there exists a lattice isomorphism between the lattice of fully invariant $\lang$-congruences on $\Fml$ and the one of equational theories of type $\lang$ (see, for instance, \cite[Chapter II, Section 14]{bursan}). In terms of our notations, each nucleus $\gamma$ on $\wp(\EQ)$ corresponds to a fully invariant congruence $\equiv_\g$ on the $\lang$-algebra $\Fml$. Hence $\wp(\EQ)_\g$ corresponds to the interval $[\equiv_\g,\top]$ of the lattice of fully invariant congruences on $\Fml$. These simple observations should help the reader to better understand the meaning of the results presented.

A remark is needed also for what concerns finitary properties of consequence relations and their interpretations. As the reader may have noticed, we completely disregard finitarity issues throughout the paper. Actually the only reason for that is that the exhaustive discussion presented by Galatos and Tsinakis in Section 6 of \cite{galtsi} readily applies to all the results presented here. Basically, in the correspondence between action-invariant consequence relations and nuclei on quantale modules established in Proposition \ref{oprelnuc}, finitary consequence relations are mapped to algebraic (in the lattice-theoretic sense) nuclei, and vice versa. Hence an interpretation between two systems is finitary (i.e. preserves finitarity) if and only if its corresponding quantale module homomorphism preserves compactness, namely, maps compact elements of the domain to compact elements of the codomain.

The results of Section \ref{interabs}, along with Corollary \ref{transchar2}, allow us to define the category $\cat{ADS}$ of abstract deductive systems as the category whose objects are pairs $\la \QQ, \MM\ra$, where $\QQ$ is a quantale and $\MM$ is a left $\QQ$-module, and whose morphisms are pairs $\la h, f\ra: \la \QQ, \MM \ra \lto \la \RR, \NN\ra$ where $h: \QQ \lto \RR$ is a quantale translation and $f: \MM \lto \NN_h$ is a $\QQ$-module homomorphism. Now let us denote by $p\cat{DS}$ the category whose objects are propositional deductive systems $\la S, \vdash\ra$ and morphisms are pairs composed by a language translation $\tau$ and (the extension to the powerset of the domain of the system of) a $\ov\tau$-action-invariant interpretation. Then it follows immediately from Theorem \ref{faiththm} that there exists a full embedding of $p\cat{DS}$ into $\cat{DS}$. This observation indicates a direction for further investigations and future works.


\begin{thebibliography}{99}

\bibitem{cats}
Ad\'amek J., Herrlich H., Strecker G.~E., {\em Abstract and Concrete Categories -- The Joy of Cats}, John Wiley \& Sons, Inc., 1990 \\ (available on line at~\cite{catsite})

\bibitem{ban}
Banaschewski B., Nelson E., Tensor products and bimorphisms, {\em Canad. Math. Bull.}, {\bf 19}/4 (1976), 385--402.

\bibitem{blokjonsson2}
Blok W.~J., J\'onsson B., Equivalence of consequence operations, {\em Studia Logica}, {\bf 83} (2006), 91--110.

\bibitem{blokpigozzi}
Blok W.~J., Pigozzi D., Algebraizable logics, {\em Memoirs of the Am. Math. Soc.}, {\bf 77}/396 (1989).

\bibitem{blokpigozzi2}
Blok W.~J., Pigozzi D., Abstract algebraic logic and the deduction theorem. \\ Manuscript available on-line at~\href{http://orion.math.iastate.edu/dpigozzi/papers/aaldedth.pdf}{Pigozzi's webpage}.

\bibitem{brou}
Brouwer L.~E.~J., \emph{Over de grondslagen der wiskunde}, Doctoral dissertation, Amsterdam, 1907.

\bibitem{bursan}
Burris S., Sankappanavar H.P., {\em A Course in Universal Algebra}, Springer Verlag, Berlin, 1981.

\bibitem{cze}
Czelakowski J., Equivalential logics (after 25 years  of investigations), Reports on Mathematical Logic, {\bf 38} (2004), 23--36.  

\bibitem{dott}
D'Ottaviano I.~M.~L., Feitosa H.~A., Conservative translations, {\em Annals of Pure and Applied Logic}, {\bf 108} (2001), 205--227.

\bibitem{dza}
Dzhaparidze G., A generalized notion of weak interpretability and the corresponding modal logic, {\em Annals of Pure and Applied Logic}, {\bf 61} (1993), 113--160.

\bibitem{feitosa}
Feitosa H.~A., {\em Tradu\c c\~oes conservativas}, Doctoral thesis, University of Campinas, Brazil, 1997.

\bibitem{galgil}
Galatos N., Gil-F\'erez J., Modules over Quantaloids: Applications to the Isomorphism Problem in Algebraic Logic and $\pi$-institutions, Journal of Pure and Applied Algebra, to appear.

\bibitem{galtsi}
Galatos N., Tsinakis C., Equivalence of consequence relations: an order-theoretic and categorical perspective, {\em Journal of Symbolic Logic}, {\bf 74}/3 (2009), 780--810.

\bibitem{gent}
Gentzen G., {\em Untersuchungen \"uber das logische Schliessen}, Math. Z., {\bf 39} (1934).

\bibitem{glivenko}
Glivenko V., Sur quelques points de la logique de M. Brouwer, {\em Acad. Roy. Belgique, Bull. Classe Sci.}, {\bf 5}/15 (1929), 183--188.

\bibitem{joyaltierney}
Joyal A., Tierney M., An extension of the Galois theory of Grothendieck, {\em Memoirs of the Am. Math. Soc.}, {\bf 51}/309 (1984).

\bibitem{kat1}
Katsov E. B., On flat functors, Mat. Zametki, {\bf 4}, pp. 577--586, 1976.

\bibitem{kat2}
Katsov E. B., Tensor product of functors, Siberian Math. J., {\bf 19}, pp. 222--229, 1978. 

\bibitem{kolmo}
Kolmogorov A.~N., On the principle of excluded middle (1925), in \cite{van}.

\bibitem{krupas}
Kruml D., Paseka J., Algebraic and Categorical Aspects of Quantales, In: M. Hazewinkel Ed., {\em Handbook of Algebra, Vol. 5}, Elsevier, 2008.

\bibitem{lin}
Linton F. E. J., The multilinear Yoneda Lemma, Lect. Notes Math., {\bf 195}, pp. 209--230, Springer-Verlag, Berlin, 1970.


\bibitem{mpt}
Metcalfe G., Paoli F., Tsinakis C., Ordered algebras and logic, in: H. Hosni, F. Montagna (Eds.), {\em Probability, Uncertainty, Rationality}, Edizioni della Normale, pp. 1--85, Pisa, 2010.

\bibitem{diac}
Mossakowski T., Diaconescu R., Tarlecki A., What is a Logic Translation?, {\em Logica Universalis}, {\bf 3}/1 (2009), 95--124.

\bibitem{pynko}
Pynko A.~P., Definitional equivalence and algebraizability of generalized logical systems, {\em Annals of Pure and Applied Logic}, {\bf 98} (1999), 1--68.

\bibitem{pase}
Paseka J., A note on nuclei of quantale modules, {\em Cahiers Topologie G\'eom. Diff\'erentielle Cat\'eg.}, XLIII (2002), 19--34.

\bibitem{raftery}
Raftery J., Correspondences between Gentzen and Hilbert Systems, {\em Journal of Symbolic Logic}, {\bf 71}/3 (2006), 903--957.

\bibitem{rebver}
Rebagliato J., Verd\'u V., On the algebraization of some Gentzen systems, {\em Fund. Inform.}, {\bf 18} (1993), 319--338.

\bibitem{rose}
Rosenthal K.~I., {\em Quantales and their applications}, Longman Scientific and Technical, 1990.

\bibitem{thesis}
Russo C., {\em Quantale Modules, with Applications to Logic and Image Processing}, Ph.D. Thesis, University of Salerno, Italy, 2007. \\
{\tt arXiv:0909.4493v4 [math.LO]}

\bibitem{russo}
Russo C., Quantale Modules and their Operators, with Applications, {\em Journal of Logic and Computation}, {\bf 20}/4 (2010), 917--946. \\ {\tt arXiv:1002.0968v1 [math.LO]}

\bibitem{solo}
Solovyov S. A., {\em On the category $Q$-Mod}, {\em Alg. Univ.}, {\bf 58} (2008), 35--58.

\bibitem{tarski}
Tarski A., in collaboration with Mostowski A. and Robinson R. M., {\em Undecidable Theories}, North-Holland, Amsterdam, 1953.

\bibitem{van}
Van Heijenoort J.~L.~M., \emph{From Frege to G\"odel: A Source Book in Mathematical Logic, 1879-1931}, Harvard University Press, Cambridge MA, 1967.

\bibitem{woj}
W\'ojcicki R., {\em Theory of Logical Calculi -- Basic Theory of Consequence Operations}, Kluwer Academic Publishers, Dordrecht, 1988.

\bibitem{catsite}
http://katmat.math.uni-bremen.de/acc


\end{thebibliography}
\end{document}